\DeclareFontFamily{OMX}{MnSymbolE}{}
\DeclareSymbolFont{MnLargeSymbols}{OMX}{MnSymbolE}{m}{n}
\DeclareFontShape{OMX}{MnSymbolE}{m}{n}{
    <-6>  MnSymbolE5
   <6-7>  MnSymbolE6
   <7-8>  MnSymbolE7
   <8-9>  MnSymbolE8
   <9-10> MnSymbolE9
  <10-12> MnSymbolE10
  <12->   MnSymbolE12
}{}
\DeclareFontShape{OMX}{MnSymbolE}{b}{n}{
    <-6>  MnSymbolE-Bold5
   <6-7>  MnSymbolE-Bold6
   <7-8>  MnSymbolE-Bold7
   <8-9>  MnSymbolE-Bold8
   <9-10> MnSymbolE-Bold9
  <10-12> MnSymbolE-Bold10
  <12->   MnSymbolE-Bold12
}{}
\let\llangle\@undefined
\let\rrangle\@undefined
\DeclareMathDelimiter{\llangle}{\mathopen}%
                     {MnLargeSymbols}{'164}{MnLargeSymbols}{'164}
\DeclareMathDelimiter{\rrangle}{\mathclose}%
                     {MnLargeSymbols}{'171}{MnLargeSymbols}{'171}
\numberwithin{equation}{section}
\numberwithin{figure}{section}
\numberwithin{table}{section}
\theoremstyle{plain}
\newtheorem{thm}{\protect\theoremname}[section]
\theoremstyle{definition}
\newtheorem*{example*}{\protect\examplename}
\theoremstyle{remark}
\newtheorem{rem}[thm]{\protect\remarkname}
\theoremstyle{plain}
\newtheorem{prop}[thm]{\protect\propositionname}
\theoremstyle{plain}
\theoremstyle{plain}
\newtheorem{lem}[thm]{\protect\lemmaname}
\newlist{thmstepnv}{enumerate}{4}
\setlist[thmstepnv]{leftmargin=*,align=left,wide,labelwidth=!,itemindent=!,labelindent=0pt}
\setlist[thmstepnv,1]{label={\itshape {\thmstepname} \arabic*.},ref=\arabic*}
\setlist[thmstepnv,2]{label={\itshape {\thmstepname} {\thethmstepnvi\alph*}.},ref=\thethmstepnvi\alph*}
\setlist[thmstepnv,3]{label={\itshape {\thmstepname\ \alph*.}},ref=\alph*}
\setlist[thmstepnv,4]{label={\itshape {\thmstepname} \arabic*.},ref=\arabic*}
\theoremstyle{plain}
\newlist{casenv}{enumerate}{4}
\setlist[casenv]{leftmargin=*,align=left,widest={iiii}}
\setlist[casenv,1]{label={{\itshape\ \casename} \arabic*.},ref=\arabic*}
\setlist[casenv,2]{label={{\itshape\ \casename} \roman*.},ref=\roman*}
\setlist[casenv,3]{label={{\itshape\ \casename\ \alph*.}},ref=\alph*}
\setlist[casenv,4]{label={{\itshape\ \casename} \arabic*.},ref=\arabic*}
\newcommand{\cref}[1]{\zcref{#1}}
\newcommand{\Cref}[1]{\zcref[S]{#1}}
\newcommand{\crefrange}[2]{\zcref[range,rangetopair=false,endrange=stripprefix]{#1,#2}}
\renewcommand{\eqref}{\cref}
\DeclareFontFamily{U}{matha}{\hyphenchar\font45}
\DeclareFontShape{U}{matha}{m}{n}{
      <5> <6> <7> <8> <9> <10> gen * matha
      <10.95> matha10 <12> <14.4> <17.28> <20.74> <24.88> matha12
      }{}
\DeclareSymbolFont{matha}{U}{matha}{m}{n}
\DeclareFontFamily{U}{mathx}{\hyphenchar\font45}
\DeclareFontShape{U}{mathx}{m}{n}{
      <5> <6> <7> <8> <9> <10>
      <10.95> <12> <14.4> <17.28> <20.74> <24.88>
      mathx10
      }{}
\DeclareSymbolFont{mathx}{U}{mathx}{m}{n}
\DeclareMathDelimiter{\vvvert}{0}{matha}{"7E}{mathx}{"17}
\providecommand{\casename}{Case}
\providecommand{\conjecturename}{Conjecture}
\providecommand{\corollaryname}{Corollary}
\providecommand{\definitionname}{Definition}
\providecommand{\examplename}{Example}
\providecommand{\assumptionname}{Assumption}
\providecommand{\lemmaname}{Lemma}
\providecommand{\propositionname}{Proposition}
\providecommand{\remarkname}{Remark}
\providecommand{\theoremname}{Theorem}
\providecommand{\thmstepname}{Step}
\global\long\def\e{\mathrm{e}}%
\global\long\def\Cov{\mathrm{Cov}}%
\global\long\def\RR{\mathbb{R}}%
\global\long\def\EE{\mathbb{E}}%
\global\long\def\NN{\mathbb{N}}%
\global\long\def\clG{\mathcal{G}}%
\global\long\def\clF{\mathcal{F}}%
\global\long\def\clW{\mathcal{W}}%
\global\long\def\clC{\mathcal{C}}%
\global\long\def\clZ{\mathcal{Z}}%
\global\long\def\clQ{\mathcal{Q}}%
\global\long\def\clA{\mathcal{A}}%
\global\long\def\clU{\mathcal{U}}%
\global\long\def\fkm{\mathfrak{m}}%
\global\long\def\fkn{\mathfrak{n}}%
\global\long\def\scrX{\mathscr{X}}%
\global\long\def\Frob{\mathrm{F}}%
\global\long\def\Id{\mathrm{Id}}%
\global\long\def\Law{\operatorname{Law}}%
\global\long\def\tr{\operatorname{tr}}%
\global\long\def\Var{\operatorname{Var}}%
\global\long\def\Cov{\operatorname{Cov}}%
\global\long\def\Lip{\operatorname{Lip}}%
\global\long\def\dif{\mathrm{d}}%
\global\long\def\Dif{\mathrm{D}}%
\def\f{\frac}
\begin{document}

\title{A critical stochastic heat equation\\with long-range noise}

\author{Alexander Dunlap,\texorpdfstring{$^1$}{} Martin Hairer,\texorpdfstring{$^{2,3}$}{} and Xue-Mei Li\texorpdfstring{$^{2,3}$}{}}
\def\email#1{\renewcommand{\ttdefault}{cmtt}{email: \tt#1}}
\institute{Duke University, USA, \email{alexander.dunlap@duke.edu}
\and EPFL, Switzerland, \email{\{martin.hairer,xue-mei.li\}@epfl.ch}
\and Imperial College London, UK
}

\maketitle

\begin{abstract}
    We consider a semilinear stochastic heat equation in spatial dimension at least $3$, forced by a noise that is white in time with a covariance kernel that decays like $\lvert x\rvert^{-2}$ as $\lvert x\rvert\to\infty$. We show that in an appropriate diffusive scaling limit with a logarithmic attenuation of the noise, the pointwise statistics of the solution can be approximated by the solution to a forward-backward stochastic differential equation (FBSDE). The scaling and structure of the problem is similar to that of the two-dimensional stochastic heat equation forced by an approximation of space-time white noise considered by the first author and Gu (\emph{Ann. Probab.}, 2022). However the resulting FBSDE is different due to the long-range correlations of the noise.
\end{abstract}
\setcounter{tocdepth}{2}
\tableofcontents

\section{Introduction}

In this paper, we are interested in semilinear stochastic heat equations roughly of the form
\begin{equation}\label{eq:formal-eqn}
  \dif u_t = \frac12 \Delta u_t + \sigma(u_t)\dif W_t,
\end{equation}
where $(\dif W_t)$ is a white-in-time noise with spatial covariance given by the Riesz kernel $\lvert\cdot\rvert^{-2}$.
We note two important features of this stochastic heat equation. First, the correlation function is not compactly supported, and indeed has a rather heavy non-integrable tail, so we consider the noise to have 
``long-range correlations'' like those considered by Gerolla and the second two authors in \cite{GHL23}.
Second, this equation is \emph{critical}, in that if for some $\lambda>0$ we set $u^\lambda(t,x) = u(\lambda^2 t,\lambda x)$, then $u^\lambda$ formally satisfies the same equation \cref{eq:formal-eqn}. Stochastic heat equations exhibiting this type of formal scale-invariance of have been of significant recent interest in the literature, which we will review below.

Even in the additive and multiplicative cases $\sigma(u) = \alpha\in\RR$ or $\sigma(u)=\beta u$ for some $\beta\in\RR$,
the equation \cref{eq:formal-eqn} does not yield function-valued solutions in spatial dimension $d\ge 2$. Since we do not expect any additional regularity from taking a more general $\sigma$, it is not clear how to interpret
the nonlinearity $\sigma(u)$ for general $\sigma$, nor (if this quantity can be interpreted) the product with the irregular noise. See \cite{MT04}, discussed further below, for \emph{measure}-valued
solutions in the multiplicative case.
One way to address this difficulty is to regularize the covariance at some small scale $\rho>0$, and then try to take a limit $\rho\to 0$. %
In this work, we show that if in addition to mollifying the nonlinearity in this way we also attenuate by a logarithmic factor in $\rho$, then we can obtain interesting limits of the pointwise values of the solution as $\rho \downarrow 0$.

Let us now introduce our setting precisely. Let $\fkm,\fkn,d\in\NN$ with $d\ge 3$. We study the $\RR^\fkm$-valued stochastic heat equation
\begin{equ}
    \dif u_t (x) = \frac12\Delta u_t(x)\dif t + \frac{\sigma(u_t(x))}{\sqrt{\log \rho^{-1}}}\dif W^\rho_t(x),\qquad t\in\RR,x\in\RR^d.\label{eq:ueqn}
\end{equ}
Here, $\Delta$ denotes the spatial Laplacian and $\sigma\colon\RR^\fkm\to\RR^{\fkm}\otimes\RR^\fkn$ is a uniformly Lipschitz nonlinearity, where the space of $\fkm\times\fkn$ matrices $\RR^{\fkm}\otimes\RR^\fkn$ is equipped with the Frobenius norm  $|\sigma|_F=\sqrt{\sigma \sigma^T}$ (the $L^2$ norm of the vector of singular values). %
The $\RR^\fkn$-valued noise $(\dif W^\rho_t(x))$ is adapted to a temporal filtration $\{\clF_t\}_t$ and has correlation structure
\begin{equation}
    \EE[\dif W^\rho _t(x)\dif W^\rho_{t'}(x')^\top]=\delta(t-t')R^\rho(x-x')\Id_\fkn,
    \label{eq:noise-covariance}
\end{equation}
where we define
\begin{equation}
    R(x) = \lvert x\rvert^{-2}\qquad\text{and}\qquad R^\rho = G_{2\rho}*R.\label{eq:Rdef}
\end{equation}
Here, $G_t(x) = (2\pi t)^{-d/2}\e^{-\lvert x\rvert^2/(2t)}$ is the standard $d$-dimensional heat kernel and $*$ denotes spatial convolution. In the sequel, we will use the notation $\clG_tv=G_t *v$ for convolution with the heat kernel. We consider solutions to \cref{eq:ueqn} in the mild sense, namely
\begin{equation}
    u_t^\rho(x) = \clG_t u_0 (x) + \frac1{\sqrt{\log \rho^{-1}}}\int_0^t \clG_{t-s}[\sigma(u^\rho_s)\,\dif W^\rho _s](x).\label{eq:umild}
\end{equation}

Our aim is to study pointwise limits of \cref{eq:umild}, as $\rho\downarrow 0$, in this setting of both \emph{scaling criticality} and \emph{long-range dependence}. Our main result is a close analogue of that in \cite{DG22,DG23a}, where the noise covariance 
is taken to be a scale-$\rho^{1/2}$ approximation of a delta function (in particular the correlation function has compact support). Namely, we show that the law of the solution at
any given location asymptotically coincides with the composition of the heat flow 
and the law of a suitable forward-backward stochastic differential equation (FBSDE). The FBSDE arising in our case is \begin{subequations}
  \label{eq:FBSDE}
\begin{align}
    \dif \Gamma_{a,Q}(q) &= J(Q-q,\Gamma_{a,Q}(q))\dif B(q),\qquad q\in (0,Q);\label{eq:FBSDE-SDE}\\
    \Gamma_{a,Q}(0) &= a;\label{eq:FBSDE-ic}\\
    J(r,b) &= \frac{\EE[\sigma(\Gamma_{b,r}(r))]}{\sqrt{2(d-2)}},\qquad r\in (0,Q),\label{eq:FBSDE-J}
\end{align}
\end{subequations}
where $(B(q))_{q\in [0,Q]}$ is a standard $\RR^\fkn$-valued Brownian motion.
We will show in \cref{prop:Jcharacterization} below that, if $\Lip(\sigma)<\sqrt{2(d-2)}$, then there is a
unique $J\colon [0,1]\to \RR^\fkm\otimes\RR^\fkm$ such that the resulting solutions to \crefrange{eq:FBSDE-SDE}{eq:FBSDE-ic} satisfy \cref{eq:FBSDE-J}. In other words, the FBSDE \cref{eq:FBSDE} has a unique solution for $Q\in [0,1]$. We note that the diffusivity in \cref{eq:FBSDE-SDE} can alternatively be written as
\begin{equation}\label{eq:rewrite-diffusivity}
J(Q-q,\Gamma_{a,Q}(q)) = \left.\frac{\EE[\sigma(\Gamma_{b,Q-q}(Q-q))]}{\sqrt{2(d-2)}}\right\rvert_{b=\Gamma_{a,Q}(q)}
=\f{\EE \left[\sigma(\Gamma_{a,Q}(Q))\mid \Gamma_{a,Q}(q)\right]}{\sqrt{2(d-2)}}.
\end{equation}
This is because the process $(\Gamma_{a,Q}(q+r))_{r\in [0,Q-q]}$
solves the SDE problem
\[
  \Gamma_{a,Q}(q+r) = \Gamma_{a,Q}(q) + \int_0^r J(Q-q-s,\Gamma_{a,Q}(q+s))\,\dif B(q+s),\qquad r\in [0,Q-q],
\]
which, up to a time translation of $B$,  is the same integral equation as that solved by $(\Gamma_{b,Q-q}(r))_{r\in [0,Q-q]}$ if we condition on $\Gamma_{a,Q}(q) = b$ (in the sense of stochastic flows).
Therefore, taking $r=Q-q$, we obtain $\Law(\Gamma_{a,Q}(Q)\mid \Gamma_{a,Q}(q) = b) = \Law(\Gamma_{b,Q-q}(Q-q))$, and %
\cref{eq:rewrite-diffusivity} follows. The function $J$ is called the \emph{decoupling function} of the FBSDE. We discuss it in more detail in \cref{subsec:FBSDE} below.

We denote by $\clU_{s,t}^\rho$ the (nonlinear) propagator of \cref{eq:ueqn}, 
so $\clU_{s,t}^\rho u_s$ denotes the solution at time $t$ of \cref{eq:SPDE-rescaled} with initial data $u_{s}\colon \RR^d\to \RR^\fkm$ at time $s$.  Our main theorem states:

\begin{thm}\label{thm:mainthm}
Suppose that $\Lip(\sigma)<\sqrt {d-2}$.
 Then, for any (possibly random) initial condition $u_0$, independent of the noise $W$, such that $$\sup_{x\in\RR^d} \EE \lvert u_0(x)\rvert ^\ell<\infty$$ 
 for some $\ell>2$, the following holds for any fixed $t>0$ and $x\in\RR^d$, 
  \begin{equ}\label{eq:Wasserstein-convergence}
    \lim_{\rho\downarrow 0}\clW^2\bigl((\clU_{0,t}^\rho u_0)(x),\Gamma_{  \clG_tu_0(x),1}(1) \bigr)=0,
    \end{equ}
where $\clW^2$ denotes the Wasserstein-$2$ distance.
\end{thm}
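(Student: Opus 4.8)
The plan is to condition on $u_0$ --- reducing, by joint convexity of $\clW^2$ and dominated convergence against the moment bounds below, to a deterministic initial datum --- and then to identify the limiting law as that of \cref{eq:FBSDE} at $Q=1$. The first ingredient is a uniform a priori bound: applying the Burkholder--Davis--Gundy inequality in \cref{eq:umild}, using that the stochastic integral there has variance $O(1)$ uniformly in $\rho$ --- the prefactor $(\log\rho^{-1})^{-1}$ precisely compensating the logarithmically divergent $\int_0^t(t-s)^{-1}\,\dif s$ once it is truncated at the scale $t-s\sim\rho$ that the mollification $R^\rho$ of \cref{eq:Rdef} introduces --- and a Gronwall argument yields $\sup_{\rho}\sup_{s\le t,\,y\in\RR^d}\EE|u_s^\rho(y)|^\ell<\infty$. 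Hence the laws of $(\clU_{0,t}^\rho u_0)(x)$ are tight and uniformly square-integrable, and it suffices to prove that $(\clU_{0,t}^\rho u_0)(x)$ converges in law to $\Gamma_{\clG_t u_0(x),1}(1)$.

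Fix a deterministic initial datum and track how the pointwise value is built up in a logarithmically rescaled time. Let $q^\rho\colon[0,t]\to[0,1]$ be the reparametrization that behaves like $s\mapsto\tfrac{\log(t/(t-s))}{\log(1/\rho)}$ away from $s=t$, with inverse $s^\rho$, and put $\tau^\rho(q)=t-s^\rho(q)$; this change of variables is what renders the fixed $t>0$ invisible at leading order and produces $Q=1$. Consider the $\clF$-martingale $M_q^\rho:=\EE\bigl[(\clU_{0,t}^\rho u_0)(x)\mid\clF_{s^\rho(q)}\bigr]$, which by the martingale property of It\^o integrals equals $\clG_{\tau^\rho(q)}u_{s^\rho(q)}^\rho(x)$ (the heat-average of $u^\rho$ over the scale $\sqrt{\tau^\rho(q)}$ about $x$) and which interpolates between $M_0^\rho=\clG_t u_0(x)$ and $M_1^\rho=(\clU_{0,t}^\rho u_0)(x)$. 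In parallel with \cref{eq:FBSDE-J} introduce the empirical decoupling function $J^\rho\colon[0,1]\times\RR^\fkm\to\RR^\fkm\otimes\RR^\fkm$ by declaring $\sqrt{2(d-2)}\,J^\rho(r,b)$ to be the expectation of $\sigma$ evaluated at the analogous rescaled pointwise value of \cref{eq:ueqn} started from the constant datum $b$ and run for scaled time $r$ (spatial homogeneity makes the evaluation point irrelevant). The proof then amounts to: (i) $J^\rho\to J$ locally uniformly, $J$ being the decoupling function of \cref{prop:Jcharacterization}; and (ii) $M^\rho$ solves, up to vanishing errors, the martingale problem of $\dif X_q=J^\rho(1-q,X_q)\,\dif B_q$, $X_0=\clG_t u_0(x)$.

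Both (i) and (ii) rest on one computation, that of the quadratic variation of $M^\rho$. Since $M^\rho$ is the conditional-expectation martingale, the semigroup identity applied to \cref{eq:umild} gives, over a scaled window $[q,q+\dif q]$ with physical image $[s,s+\dif s]$ (so $s=s^\rho(q)$, $\dif s/(t-s)=\log(\rho^{-1})\,\dif q$), the exact increment $M_{q+\dif q}^\rho-M_q^\rho=\tfrac{1}{\sqrt{\log\rho^{-1}}}\int_s^{s+\dif s}\clG_{t-s'}[\sigma(u_{s'}^\rho)\,\dif W_{s'}^\rho](x)$. Replacing $\sigma(u_{s'}^\rho)$ by $\sigma(u_s^\rho)$ up to a negligible error and using \cref{eq:noise-covariance}, writing $\tau=t-s=\tau^\rho(q)$,
\[
\dif\langle M^\rho\rangle_q=\biggl(\tau\iint G_{\tau}(x-y)\,G_{\tau}(x-y')\,R^\rho(y-y')\,\sigma(u_s^\rho(y))\,\sigma(u_s^\rho(y'))^\top\,\dif y\,\dif y'\biggr)\,\dif q+o(\dif q).
\]
Rescaling $y=x+\sqrt{\tau}\,u$ and using $R^\rho\approx|\cdot|^{-2}$ on the relevant scales (which are $\gg\rho$), the bracket becomes $\iint G_1(u)G_1(u')|u-u'|^{-2}\,\sigma(u_s^\rho(x+\sqrt{\tau}\,u))\,\sigma(u_s^\rho(x+\sqrt{\tau}\,u'))^\top\,\dif u\,\dif u'$. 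Here the long-range noise is decisive: the fluctuation $u_s^\rho(x+\sqrt{\tau}\,u)-M_q^\rho$ of the mesoscopic field decorrelates over $|u-u'|=O(1)$ --- its residual long-range correlation, built up over a time-depth of only $\tau$, is a vanishing fraction of its variance --- while the singularity of $|u-u'|^{-2}$ at the diagonal is integrable in $d\ge3$ and so contributes negligibly; thus, by spatial self-averaging, the bracket concentrates on $\bigl(\iint G_1(u)G_1(u')|u-u'|^{-2}\,\dif u\,\dif u'\bigr)\,2(d-2)\,J^\rho(1-q,M_q^\rho)\,J^\rho(1-q,M_q^\rho)^\top$, the last factors being the self-consistent identification of the spatially averaged $\sigma(u^\rho)$ with $\sqrt{2(d-2)}\,J^\rho(1-q,M_q^\rho)$. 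By the scaling identity $\iint G_1(u)G_1(u')|u-u'|^{-2}\,\dif u\,\dif u'=\int G_2(w)|w|^{-2}\,\dif w=\tfrac{1}{2(d-2)}$ we obtain $\dif\langle M^\rho\rangle_q=J^\rho(1-q,M_q^\rho)\,J^\rho(1-q,M_q^\rho)^\top\,\dif q+o(\dif q)$. Making the concentration and the deviation of $\sigma(u^\rho)$ from its spatial average precise requires quantitative two-point-function bounds on $u^\rho$, uniform in $\rho$; the closure of the hierarchy for these is exactly what forces the strengthened hypothesis $\Lip(\sigma)<\sqrt{d-2}$, whereas $\Lip(\sigma)<\sqrt{2(d-2)}$ is all that \cref{prop:Jcharacterization} requires.

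Granting the quadratic-variation estimate: applied with a constant initial datum it shows $J^\rho$ satisfies the fixed-point relation characterizing $J$ up to $o(1)$, so the contraction underlying \cref{prop:Jcharacterization} gives $J^\rho\to J$ locally uniformly, which is (i); applied for general initial data it is (ii). Then tightness (from the moment bounds) and the standard stability of martingale problems show that every subsequential limit of $M^\rho$ is a solution of \cref{eq:FBSDE} with $Q=1$ and initial value $\clG_t u_0(x)$; uniqueness there (again \cref{prop:Jcharacterization}) promotes this to convergence of the whole family, so $(\clU_{0,t}^\rho u_0)(x)=M_1^\rho$ converges in law to $\Gamma_{\clG_t u_0(x),1}(1)$. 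Uniform square-integrability upgrades this to convergence in $\clW^2$, and undoing the conditioning on $u_0$ by joint convexity of $\clW^2$ and dominated convergence completes the proof. The principal difficulty is the quadratic-variation estimate --- establishing that, tested against the long-range covariance, $\sigma(u^\rho)$ self-averages to the conditional mean $\sqrt{2(d-2)}\,J^\rho(1-q,M_q^\rho)$ while the fluctuations vanish in the limit --- which is where essentially all the work, in particular the uniform two-point-function estimates, is concentrated.
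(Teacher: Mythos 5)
Your architecture is, at the level of mechanism, the same as the paper's: the pointwise value is built up through the martingale $\clG_{t-s}u^\rho_s(x)$ in logarithmic time, the quadratic variation self-averages against the long-range kernel to a conditional-mean quantity, and the limit is identified through the fixed-point property \cref{eq:Qfp} together with the uniqueness in \cref{prop:Jcharacterization}; you package this as a continuous-time martingale problem, whereas the paper runs a discrete Markov chain obtained by switching the noise off on mesoscopic intervals and freezing the solution to a constant (\cref{prop:turnoffnoise,prop:flatten}, \cref{sec:analysis-markov-chain}) and then applies a Stroock--Varadhan chain-to-diffusion theorem. The genuine gap is that your decisive step --- ``by spatial self-averaging, the bracket concentrates on $J^\rho(1-q,M^\rho_q)J^\rho(1-q,M^\rho_q)^\top$'' --- is asserted, and the heuristic you give for it fails for the true solution. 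At separation $\lvert y-y'\rvert\sim\sqrt{\tau}$ with $\tau=t-s$, the fields $u^\rho_s(y)$ and $u^\rho_s(y')$ share their whole history, and (cf.\ \cref{eq:integrateHKRrho}) their \emph{unconditional} correlation is of order $\log_{\rho^{-1}}(s/\tau)\approx q$, not $o(1)$; it is only the covariance \emph{conditional on the mesoscopic average at the current scale} that is negligible. Making that conditioning available is exactly what needs an argument: given $\clF_s$, the local field near $x$ is a functional of the entire profile of $u^\rho_s$, not of $M^\rho_q$ alone, so identifying its local law with that of a solution restarted from the constant datum $M^\rho_q$ (which is what your $J^\rho$ encodes) has to be earned. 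In the paper this is precisely the role of turning the noise off on $[t_m,t_m']$ and applying the freezing operator $\clZ_X$, which genuinely restarts the equation from a constant; only then does the covariance of $\sigma(u)$ accumulated \emph{since the restart} obey the Malliavin-calculus bound \cref{prop:covbd} and enter \cref{prop:VarofStep} as an error of relative size $O(\delta_\rho)$, thanks to the scale hierarchy \cref{eq:gammadeltacond}. You acknowledge that ``quantitative two-point-function bounds'' are needed, but you supply neither the decorrelation estimate nor, more importantly, the restart/flattening mechanism that turns the conditional law into a constant-data law; without some equivalent of it, your steps (ii) and hence (i) are not established, and this is where essentially the whole proof lives.

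A secondary problem is your opening reduction: conditioning on $u_0$ to get deterministic data is not innocent under the stated hypothesis, which controls only $\sup_x\EE\lvert u_0(x)\rvert^\ell$. For a fixed realization the relevant norm degenerates to $\sup_x\lvert u_0(x)\rvert$, which may well be infinite, so the moment, turning-off and decorrelation estimates (all phrased in the norms \cref{e:triplenorm}) are unavailable realization-by-realization. The paper sidesteps this by keeping $u_0$ random throughout: every estimate uses $\vvvert u_0\vvvert_\ell$, and the initial condition enters the limit theorem \cref{thm:MCtoDiffusion} only through convergence in law of the chain's starting value to $\clG_tu_0(x)$. This is fixable (work with the random data directly, as the paper does), but as written your first step would break the uniform bounds you rely on.
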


\begin{rem}
 While the restriction $\beta<\sqrt{d-2}$ is used in the proof, it is unlikely that this is optimal. In fact, \cref{thm:nosecondmomentphasetrans} below strongly suggests that there is no critical
 value of $\beta$ in the present case of noise with long-range correlations. Similarly, we expect that the restriction $\beta<\sqrt{2(d-2)}$ for the well-posedness of \eqref{eq:FBSDE-SDE} is suboptimal as well.
\end{rem}

\begin{rem}
    If $U$ solves the SPDE
    \begin{equation}\label{eq:SPDE-rescaled}
        \dif U_t(x) = \frac12\Delta U_t(x)\dif t + \frac{\sigma(U_t(x))}{\sqrt{\log\rho^{-1}}}\dif W^1_t(x),
    \end{equation}
 then the law of its diffusively scaled solution $ (U_{\rho^{-1}t}(\rho^{-1/2}x))_{t,x}$ is the same as the law of $(u_t(x))_{t,x}$, as can be seen by  the scaling relations $\delta(t\rho)= \f 1 \rho \delta(t)$ and $R^\rho_{2\rho}(\f x {\sqrt{\rho}})=G_2*R$. Thus, the limit $\rho\downarrow 0$ of the SPDE \cref{eq:ueqn} corresponds to the long-time, large-space behavior of an SPDE with fixed noise covariance. Note however 
 that, in order to obtain pointwise fluctuations of order $1$, it is still required to attenuate the noise strength as time is taken to infinity, hence the continued presence of the division by $\sqrt{\log\rho^{-1}}$ in~\cref{eq:SPDE-rescaled}.
\end{rem}

At a high level, the reason for the appearance of the FBSDE is the same as in \cite{DG22,DG23a}, and is discussed in more detail in their respective introductions. Roughly speaking, the logarithmic attenuation of the noise strength means the quadratic variation of the stochastic integral appearing in \cref{eq:umild} has a type of mean-field behavior and thus can be approximated by the solution to a one-dimensional equation. The self-similar structure of the equation allows us to write this one-dimensional equation as an FBSDE.

The key novelty in the present work however, is that the FBSDE \cite[(1.5--1.7)]{DG22} is qualitatively different. 
In particular, \cite[(1.7)]{DG22} reads \begin{equation}\label{eq:J-compactlysupported}J(q,a) = c_1(\EE [\sigma^2(\Gamma_{a,q}(q))])^{1/2},\end{equation} rather than $J(q,a) = c_2\EE [\sigma(\Gamma_{a,q}(q))]$ as in \cref{eq:FBSDE-J}. The constants $c_1,c_2\in(0,\infty)$ are a matter of normalization and irrelevant for the present discussion.

The reason for the difference in $J$ comes from the covariance structure of the noise. The quadratic variation of the stochastic integral in \cref{eq:umild} is 
\begin{equation}\label{eq:QV-intro}
  \frac1{\log\rho^{-1}}\int_0^t\iint G_{t-s}(x-y_1)G_{t-s}(x-y_2)R^\rho(y_1-y_2)\sigma(u_s(y_1))\sigma^\top(u_s(y_2))\,\dif y_1\,\dif y_2\,\dif s.
\end{equation}
In the case of compactly supported covariance kernel considered in \cite{DG22,DG23a}, the quantity $\sigma(u_s(y_1))\sigma^\top(u_s(y_2))$ can be approximated by $(\sigma\sigma^\top)(s,u_s((y_1+y_2)/2))$, since the compactly supported covariance kernel $R^\rho$ forces $y_1\approx y_2$. This leads to the appearance of $\sigma^2$ in \cref{eq:J-compactlysupported}. On the other hand, in the present setting when $R^\rho$ is long-range, the spatial decorrelation of $u_s$ implies that the spatial integral in \cref{eq:QV-intro} more closely resembles $\EE[\sigma(u_s)]\EE[\sigma(u_s)]^\top$, which leads to the form of \cref{eq:FBSDE-J}. In this discussion, we have glossed over the fact that the quadratic variation in fact has to be estimated ``at every scale,'' and the expectations here really need to be conditioned on the spatial average of $u_s$ at the present scale, which leads to the conditioning on the right side of \cref{eq:rewrite-diffusivity}.

This phenomenon is very similar to that observed in comparing the results of \cite{GL19} with those of \cite{GHL23} on Edwards--Wilkinson fluctuations of stochastic heat equations in settings with supercritical scaling. In \cite{GL19}, compactly supported noise is considered, and the effective noise strength of the resulting Edwards--Wilkinson field involves the microscopic two-point correlation function of $\sigma$ applied to the stationary solution. (See \cite[(1.3)]{GL19}.) In \cite{GHL23}, long-range noise is considered, and the effective noise strength is simply the expectation of $\sigma$ applied to the stationary solution and spatial correlation survives in the limit. (See \cite[Theorem~1.3]{GHL23}.)

\subsection{Properties of the forward-backward SDE\label{subsec:FBSDE}}
In order to make full use of \cref{thm:mainthm}, one of course needs to understand the solutions to the FBSDE \cref{eq:FBSDE}. We refer to \cite{MY99} for an introduction to the theory of FBSDEs, and \cite{DG22,DG23a,DG23b} for more detailed discussion of an FBSDE quite similar (but, as noted above, not identical) to \cref{eq:FBSDE}.

A standard application of Itô's formula (see e.g.\ \cite[§8.2]{MY99} or \cite[§3]{DG23a}) shows that, 
at least formally speaking, the decoupling function $J$ satisfies the quasilinear PDE
\begin{subequations}\label{eq:JPDEproblem}
    \begin{align}
      \partial_q J_{k,\ell}(q,b) &= \frac12 \tr [(JJ^\top)\nabla_b^2J_{k,\ell}](q,b),\quad q>0,b\in\RR^\fkm,1\le k\le \fkm,1\le \ell\le\fkn; \label{eq:JPDE}\\
        J(0,b) &= \frac{\sigma(b)}{\sqrt{2(d-2)}},\qquad b\in\RR^\fkm.\label{eq:Jic}
    \end{align}
\end{subequations}
Here we use the notation %
$\nabla_b^2$ for the Hessian. %
The reason we say this derivation of the PDE problem \cref{eq:JPDEproblem} is only formal is that the equation \cref{eq:JPDE} may be degenerate parabolic, as we have not made any assumption to guarantee that $JJ^\top$ has full rank. %
 The interpretation of the problem \cref{eq:JPDEproblem} can be made precise as in \cite[§3]{DG23a}. We do not go into detail in the present work since we will not need this interpretation here, %
 although it is implicit in proof of \cref{prop:explicit-sqrtsum} below. The PDE formulation has been useful in \cite{DG23b} in studying the FBSDE derived in \cite{DG22,DG23a}. We expect that a similar study of \cref{eq:JPDEproblem} could be fruitful in obtaining a deeper understanding of the decoupling function in the present setting.

Of course, for most choices of $\sigma$, we do not expect to find an explicit solution to the FBDSE \cref{eq:FBSDE}. However, we know of the following family of $\sigma$s where the solution is explicit: %
    \begin{prop}\label{prop:explicit-sqrtsum}
        Suppose that $\fkm =\fkn= 1$, $\alpha,\beta\ge 0$ and $\sigma(b) = \sqrt{\alpha + \beta b^2}$. The corresponding solution to the FBSDE \cref{eq:FBSDE} satisfies \begin{equation}J(q,b) = \sqrt{\frac{\alpha \e^{\beta q/(4(d-2))}+\beta b^2}{2(d-2)}}.\label{eq:Jexplicit}\end{equation}
    \end{prop}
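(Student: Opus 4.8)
The proposition is a verification: one produces the function $J$ of \eqref{eq:Jexplicit} and checks that, together with the diffusions it generates, it satisfies the self-consistent system \eqref{eq:FBSDE}. (A natural way to arrive at \eqref{eq:Jexplicit} is to look for a separated solution $J(q,b)^{2}=f(q)+g\,b^{2}$, with $g$ a constant, of the formal PDE \eqref{eq:JPDE}: substituting this ansatz forces $g$ by the initial condition and turns \eqref{eq:JPDE} into a first-order linear ODE for $f$, whose solution is \eqref{eq:Jexplicit}; the argument below, however, does not need \eqref{eq:JPDEproblem} to be rigorous.) Write $c=2(d-2)$ and let $J$ be the function in \eqref{eq:Jexplicit}. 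Then $J(0,b)=\sigma(b)/\sqrt c$; moreover $J(q,\cdot)$ is globally Lipschitz with $\sup_{q}\Lip\bigl(J(q,\cdot)\bigr)\le\sqrt{\beta/c}=\Lip(\sigma)/\sqrt c$ and has at most linear growth in $b$, uniformly for $q$ in bounded sets --- smoothly so when $\alpha>0$, while $J(q,b)=\sqrt{\beta/c}\,\lvert b\rvert$ when $\alpha=0$. Consequently, for each $b\in\RR^\fkm$ and $r>0$ the SDE \crefrange{eq:FBSDE-SDE}{eq:FBSDE-ic} with this $J$ has a unique strong solution $(\Gamma_{b,r}(q))_{q\in[0,r]}$, and a Gronwall estimate combined with Doob's inequality gives $\EE\sup_{q\le r}\lvert\Gamma_{b,r}(q)\rvert^{2}<\infty$.

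The key step is to introduce $\phi(q,b):=\sqrt c\,J(r-q,b)$, so that $\phi(r,b)=\sigma(b)$ and $\phi(0,b)=\sqrt c\,J(r,b)$, and to apply Itô's formula to $q\mapsto\phi\bigl(q,\Gamma_{b,r}(q)\bigr)$, using $\dif\langle\Gamma_{b,r}\rangle_{q}=J\bigl(r-q,\Gamma_{b,r}(q)\bigr)^{2}\,\dif q=\tfrac1c\,\phi\bigl(q,\Gamma_{b,r}(q)\bigr)^{2}\,\dif q$. A short computation --- which is exactly the statement that $J$ solves \eqref{eq:JPDE}, and is the step that pins down the exponential rate appearing in \eqref{eq:Jexplicit} --- shows that the finite-variation part of $\dif\phi\bigl(q,\Gamma_{b,r}(q)\bigr)$ vanishes identically, so $q\mapsto\phi\bigl(q,\Gamma_{b,r}(q)\bigr)$ is a continuous local martingale on $[0,r]$. (When $\alpha>0$ the function $\phi$ is smooth and strictly positive and Itô's formula applies with no care needed; when $\alpha=0$ the SDE reduces to $\dif\Gamma=\sqrt{\beta/c}\,\lvert\Gamma\rvert\,\dif B$, whose solution is a signed geometric Brownian motion and the martingale property can be verified by hand.) Because $\bigl\lvert\phi\bigl(q,\Gamma_{b,r}(q)\bigr)\bigr\rvert\le C\bigl(1+\sup_{q\le r}\lvert\Gamma_{b,r}(q)\rvert\bigr)\in L^{2}$, this local martingale is a genuine martingale on $[0,r]$.

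Equating its expectations at $q=0$ and $q=r$ yields
\[
  \EE\bigl[\sigma\bigl(\Gamma_{b,r}(r)\bigr)\bigr]=\EE\bigl[\phi\bigl(r,\Gamma_{b,r}(r)\bigr)\bigr]=\phi(0,b)=\sqrt c\,J(r,b),
\]
which is precisely the consistency relation \eqref{eq:FBSDE-J}. Thus the pair consisting of $J$ from \eqref{eq:Jexplicit} and the diffusions $\Gamma_{b,r}$ solves the FBSDE \eqref{eq:FBSDE}; when $\Lip(\sigma)=\sqrt{\beta}<\sqrt{2(d-2)}$, \cref{prop:Jcharacterization} guarantees uniqueness, so \eqref{eq:Jexplicit} is \emph{the} decoupling function (and when $\beta\ge2(d-2)$ it is at least \emph{a} solution). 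I do not anticipate a genuine obstacle: the two points that need attention are the upgrade from local to true martingale --- supplied by the second-moment bound, which rests only on the linear growth of $J$ --- and the mild loss of smoothness of $J$ at the origin when $\alpha=0$, which is treated separately as above.
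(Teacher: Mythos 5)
Your proposal is correct and follows essentially the same route as the paper: the paper applies Itô's formula to $N_q(Q,a)=\EE[J(Q-q,\Gamma_{a,Q}(q))]$ and checks that the drift terms cancel (the same computation as your vanishing finite-variation part), then equates $N_Q=N_0$ to obtain \cref{eq:FBSDE-J}, with uniqueness again supplied by \cref{prop:Jcharacterization}. The only cosmetic differences are that you make the local-to-true martingale upgrade explicit and treat the $\alpha=0$ case by hand via the geometric Brownian motion, whereas the paper invokes the fact that the solution never hits $0$ and an approximation argument as in \cite[Section~3]{DG23a}.
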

    See \cref{subsec:explicit} for a proof of \cref{prop:explicit-sqrtsum}, which essentially consists of checking that this choice of $J$ solves \cref{eq:JPDEproblem} and justifying the PDE in this setting.
    Of particular interest are the cases when one of $\alpha$ and $\beta$ is $0$. When $\alpha \ge 0$ and $\beta = 0$, we are considering the case of additive noise. In this case, the SPDE \cref{eq:ueqn} is simple to analyze, as the solution is (conditional on the initial data) a Gaussian process. This is consistent with the fact that, according to \cref{eq:Jexplicit}, the decoupling function $J$ is constant in this case, so the solutions to the FBSDE will also be Gaussian.

    \subsection{The multiplicative-noise case and the existence of a phase transition}
    The case $\alpha = 0$ and $\beta\ge 0$ in \cref{eq:Jexplicit} corresponds to the multiplicative noise case $\sigma(u) = \beta u$, which is of particular interest due to its connection both with directed polymers and with the KPZ equation via the Cole--Hopf transform \cite{BG97,GHL24}. The multiplicative problem with compactly supported noise covariance function in $d=2$ has been studied extensively. Of particular relevance to the present work is \cite{CSZ17}, in which it was shown that, for $\beta<\sqrt{2\pi}$, the limiting pointwise distribution is log-normal. From the point of view of the FBSDE, this is because the solution to the corresponding FBDSE is a deterministic time-change of a geometric Brownian motion; see \cite[§1.3]{DG22}. The log-normal random variable blows up as $\beta\uparrow\sqrt{2\pi}$, pointing to the phase transition at $\beta=\sqrt{2\pi}$. At $\beta=\sqrt{2\pi}$ (in fact in a window of size $O(1/\sqrt{\log\rho^{-1}})$ around this critical value), a measure-valued process called the \emph{critical stochastic heat flow} has been constructed in \cite{CSZ21} and characterized axiomatically in \cite{Tsa24}; see also \cite{BC98,CSZ19,GQT19}.

    In our present setting, when $\alpha = 0$ we see from \cref{eq:Jexplicit} that $J(q,b) = b\sqrt {\beta/(2(d-2))}$, 
    so that the solution to the FBSDE \crefrange{eq:FBSDE-SDE}{eq:FBSDE-J} is a geometric Brownian motion. However, unlike in the setting considered in \cite{CSZ17}, it remains finite for all $\beta<\infty$. This suggests that the requirement imposed in \cite{CSZ17} that $\beta$ be smaller than a critical value may be unnecessary in the present setting. As further evidence for this, we prove the following theorem.
\begin{thm}\label{thm:nosecondmomentphasetrans}
    Let $\fkm = \fkn=1$,  $\beta \in (0,\infty)$, $\sigma(u) = \beta u$, and $(u_t(x))$ solve \cref{eq:ueqn} with initial condition $u_0 \equiv 1$. Then, for any $T\in (0,\infty)$, we have a constant $C=C(\beta,T)<\infty$, independent of $\rho$, such that
    \[
        \sup_{t\in [0,T],x\in\RR^d}\EE |u_t(x)|^2\le C.
    \]
\end{thm}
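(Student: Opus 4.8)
The plan is to compute the second moment directly using the mild formulation, exploiting the fact that for $\sigma(u)=\beta u$ the variance of $u_t(x)$ satisfies a closed equation. Writing $v_t(x,x')=\EE[u_t(x)u_t(x')]$, Itô's isometry applied to \cref{eq:umild} (with $u_0\equiv 1$, so $\clG_t u_0\equiv 1$) gives
\begin{equ}
  v_t(x,x') = 1 + \frac{\beta^2}{\log\rho^{-1}}\int_0^t\iint G_{t-s}(x-y_1)G_{t-s}(x'-y_2)R^\rho(y_1-y_2)\,v_s(y_1,y_2)\,\dif y_1\,\dif y_2\,\dif s.
\end{equ}
This is a linear Volterra-type integral equation for $v$ with a positive kernel, so one may iterate it and obtain $v$ as a convergent series of multiple integrals (a polymer-type expansion), the $n$-th term carrying a factor $(\beta^2/\log\rho^{-1})^n$ together with $n$ copies of $R^\rho$ glued by heat kernels. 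The goal is to bound this series uniformly in $\rho$, $t\in[0,T]$ and $x$. By translation invariance it suffices to bound $\sup_x v_t(x,x)=\EE|u_t(x)|^2$, but it is cleaner to keep the two-point function and bound $\sup_{x,x'}v_t(x,x')$; one checks the supremum is attained (or at least controlled) on the diagonal by a maximum-principle/positivity argument.

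The key quantitative input is the size of the ``one-loop'' integral
\begin{equ}
  \sup_{x,x'}\int_0^t\iint G_{s}(x-y_1)G_{s}(x'-y_2)R^\rho(y_1-y_2)\,\dif y_1\,\dif y_2\,\dif s,
\end{equ}
which, after performing the Gaussian integrals and using $R(z)=|z|^{-2}$, behaves like $\int_0^t s^{-1}\,\dif s$ cut off at the scale $\rho$ from below — i.e.\ it is of order $\log(t/\rho)\sim\log\rho^{-1}$ for fixed $t$. This is exactly the reason for the normalization by $\log\rho^{-1}$ in \cref{eq:ueqn}: each loop contributes a factor $\beta^2\cdot\frac{\log\rho^{-1}+O(1)}{\log\rho^{-1}}=\beta^2(1+o(1))$, which is $O(1)$ rather than blowing up. To make the series summable one needs more: the successive loops must combine so that the $n$-th term is bounded by $C\beta^{2n}/n!$ or at least by $C(\beta)^n$ with summable total. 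Concretely I would prove, by induction on $n$, a bound of the form
\begin{equ}
  \frac1{(\log\rho^{-1})^n}\int_{0<s_1<\dots<s_n<t}\ \idotsint \prod_{i} G_{s_{i+1}-s_i}(\cdot)\,R^\rho(\cdot)\,\prod\dif y\,\dif s \;\le\; \frac{(C_T\beta^2)^n}{n!},
\end{equ}
with $C_T$ depending only on $T$ (and $d$). The factorial comes from the ordered time simplex once one shows that each individual $s$-integration, after integrating out the spatial variables against the heat kernels, produces a kernel of the form $\frac{1}{\log\rho^{-1}}\cdot\frac{\mathbf 1\{s\ge c\rho\}}{s}$ up to constants — a kernel whose $n$-fold ordered convolution on $[0,T]$ is $\le (\log(T/(c\rho)))^n/n!\cdot(\log\rho^{-1})^{-n}\le (C_T)^n/n!$. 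Summing over $n$ then yields $\sup_{t\le T,x}v_t(x,x)\le\sum_n (C_T\beta^2)^n/n! = \e^{C_T\beta^2}=:C(\beta,T)<\infty$, independent of $\rho$.

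The main obstacle is the spatial estimate that turns each loop into the clean kernel $\frac1s\mathbf 1\{s\gtrsim\rho\}$ uniformly in the external points $y_1,y_2$ of the previous loop; one must show the bound degrades only by an $O(1)$ multiplicative constant at each stage rather than accumulating a geometric factor that would spoil summability, and that the short-time cutoff at scale $\rho$ is genuinely there (coming from $R^\rho=G_{2\rho}*R$, which replaces the divergent $\int_0 s^{-1}$ by $\int_\rho s^{-1}$). A convenient way to organize this is to note $\iint G_s(x-y_1)G_s(x'-y_2)R^\rho(y_1-y_2)\,\dif y_1\,\dif y_2 = (G_{2s+2\rho}*R)(x-x')\lesssim (s+\rho)^{-1}$ (using $G_t*R = G_t*|\cdot|^{-2}\lesssim t^{-1}$ in $d\ge 3$, which follows from scaling since $\int_{\RR^d} G_1(z)|z|^{-2}\,\dif z<\infty$ for $d\ge 3$), so the spatial integral is bounded \emph{uniformly in $x,x'$} by $C(s+\rho)^{-1}$ with no dependence on the incoming points — which immediately gives the induction without any deterioration, and the whole argument collapses to the elementary estimate $\int_0^T (s+\rho)^{-1}\,\dif s = \log(1+T/\rho)\le \log\rho^{-1}+C_T$. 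With this observation the proof is essentially a one-line Grönwall/series argument; the only care needed is justifying Itô's isometry and the interchange of sum and integral, which is routine given the Lipschitz (here linear) $\sigma$ and the a priori moment bounds that can be obtained by a standard Picard iteration for fixed $\rho>0$.
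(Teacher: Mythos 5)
Your reduction to a closed equation for the two-point function and the uniform spatial bound $\iint G_s(x-y_1)G_s(x'-y_2)R^\rho(y_1-y_2)\,\dif y_1\,\dif y_2=(G_{2s}*R^\rho)(x-x')\le C(s+\rho)^{-1}$ are both fine (the latter is essentially \cref{eq:HKwithkernelat0} plus radial monotonicity). The genuine gap is the claimed factorial gain from the time simplex. After integrating out the spatial variables, the $n$-th term of your expansion is bounded by $\bigl(\tfrac{C\beta^2}{\log\rho^{-1}}\bigr)^n\int_{0<s_1<\dots<s_n<t}\prod_{i=1}^n(s_{i+1}-s_i+\rho)^{-1}\,\dif s$ (with $s_{n+1}=t$): the kernel attaches to the \emph{consecutive increments}, not to the individual times, so this is an iterated convolution, not a product over a simplex. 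Writing $u_i=s_{i+1}-s_i$, the integral is $\int_{u_i\ge0,\ \sum u_i\le t}\prod_i(u_i+\rho)^{-1}\,\dif u$, and since the mass of $(u+\rho)^{-1}$ concentrates at small $u$, the constraint $\sum u_i\le t$ is essentially inactive: the integral is at least $\bigl(\log\frac{t/n+\rho}{\rho}\bigr)^n=(1-o(1))^n(\log\rho^{-1})^n$ as $\rho\downarrow0$ for each fixed $n$, not $(\log\rho^{-1})^n/n!$. So your bounding series has $n$-th term of order $\bigl(\tfrac{\beta^2}{2(d-2)}\bigr)^n$ uniformly as $\rho\downarrow0$, and it is summable only for $\beta<\sqrt{2(d-2)}$. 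That regime is already covered by \cref{prop:momentbd} (which does not even use linearity), whereas the whole point of \cref{thm:nosecondmomentphasetrans} is the absence of any threshold in $\beta$.

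The lossy step is replacing $v_s(y_1,y_2)$ by its supremum (which sits on the diagonal): the long-range kernel $R^\rho(y_1-y_2)$ puts most of its weight on separations $\lvert y_1-y_2\rvert\sim\sqrt{t-s}$, where the two-point function is much smaller, and this is exactly the effect that kills the would-be phase transition. To capture it you must keep the spatial dependence, which is what the paper does: by linearity, $k_t(x)=\EE[u_t(0)u_t(x)]$ solves the Schr\"odinger-type PDE \cref{eq:kpde} with potential $\beta^2R^\rho(x)/\log\rho^{-1}$; by symmetry one reduces to bounding the total mass of the solution started from $\delta_0$; and one compares with the explicit Hardy-potential supersolution $\tilde v_t(x)=t^{\alpha_\rho}\lvert x\rvert^{-\alpha_\rho}G_{2t}(x)$ of \cite{APP09}, where $\alpha_\rho\to0$ because the effective coupling $A\beta^2/\log\rho^{-1}$ falls below the Hardy threshold $(d-2)^2/4$ for every fixed $\beta$ once $\rho$ is small. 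Some argument of this kind (Feynman--Kac with the $\lvert x\rvert^{-2}$ potential, or the comparison principle route) is needed; the term-by-term sup-norm iteration cannot reach all $\beta$.
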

In other words, unlike for the problem studied in \cite{BC98,CSZ17}, there is no value of $\beta$ at which the pointwise second moment of the solution blows up. Thus, if there is to be a phase transition, the behavior must be significantly more subtle.

A similar problem to \cref{eq:ueqn} with $\fkm=\fkn=1$ and $\sigma(u)=\beta u$, but \emph{without} the division by $\sqrt{\log\rho^{-1}}$, was studied by Mueller and Tribe in \cite{MT04}. (See \cite{KT24} for some limiting behaviors of this process.) In that case, the problem with $\rho = 0$ can be studied directly, and a measure-valued solution is obtained. We can alternatively think of this problem as \cref{eq:ueqn} with $\sigma(u)=u\sqrt{\log\rho^{-1}}$. Thus, if we believe the conjecture that the analogue of the critical value $\beta=\sqrt{2\pi}$ from \cite{CSZ21} is $\beta=\infty$ in this setting, then we could see the process constructed in \cite{MT04} as an analogue of the critical stochastic heat flow.

A discrete version of the multiplicative problem with similar noise correlation decay as $\lvert x\rvert\to \infty$ but in spatial dimension $d=2$ has recently been considered in \cite{CCD25}. In that setting, similar log-normal behavior is again observed, but in that case the power of $\log\rho^{-1}$ in the attenuation is different, and there is a phase transition in $\beta$. The study of the semilinear case in that setting is an interesting problem for future work.

\subsection{Proof strategy and organization of the paper}
The large-scale structure of the proof of \cref{thm:mainthm} is similar to the structure of the proof in \cite{DG22}. The key tool that allows us to approximate the solution to the (infinite-dimensional) SPDE by the solution to the (finite-dimensional) FBSDE is the technique of turning off the noise on approximate time intervals, which are ``long'' from the point of view of the diffusive scaling of the heat equation but ``short'' from the point of view of the effect of the noise on the solution. This means that turning off the noise on these intervals does not affect the solution much but allows the solution to relax enough to become approximately constant on appropriate mesoscopic scales, which allows us to approximate the problem by a finite-dimensional Markov chain. The relevant key estimates are proved in \cref{sec:turn-off-noise} and the Markov chain is defined and studied in \cref{sec:analysis-markov-chain}.

The remaining step, and the source of key difference between our setting and that of \cite{DG22,DG23a}, is to determine the form of the FBSDE relation \cref{eq:FBSDE-J}. To study this, we must compute the variance of the increments of the approximating Markov chain. The key estimate in this direction is \cref{prop:VarofStep} in \cref{sec:statistics}, which relies on an estimate of the covariance of a function of the solution established in \cref{sec:cov-bd}. This covariance estimate resembles \cite[Lemma~2.13]{GHL23} and is proved using based on Malliavin calculus techniques.

We note that previous works on semilinear stochastic heat equations in the critical dimension at subcritical temperature have proved a wider range of results than we show here, e.g.\ multipoint statistics \cite{CSZ17,DG22,DG23a} and Edwards--Wilkinson behavior for the rescaled and recentered random field \cite{CSZ17,Tao22,DG24}. The logarithm of the solution in the linear case has also been studied, and Edwards--Wilkinson behavior for the corresponding field has been proved \cite{CSZ20,Gu20} (see also \cite{GHL24} for a version of this result in the long-range supercritical regime), as well as some mesoscopic averaging results \cite{Tao23}. We believe that analogues of these results should hold in the present setting as well. We have chosen to keep the present paper as simple as possible in order to focus on one of the key novelties in this setting, namely the different form of the FBSDE. We leave the extensions in these various directions to future work (perhaps once the question on the necessity of the Lipschitz bound on $\sigma$ can be addressed as well).

The paper is organized as follows.
In \cref{sec:FBSDE}, a well-posedness theory for the FBSDE \cref{eq:FBSDE} is established. In \cref{sec:integral-estimates}, we establish some results about various integrals that are used in the results throughout the paper.
In \cref{sec:moment-bd}, we prove an upper bound on the moments of the solution, adapting the proof of the moment bound in \cite{DG23a}. In \cref{sec:turn-off-noise}, we implement the ``turning off the noise'' strategy described above. In \cref{sec:cov-bd}, we prove the necessary bounds on covariances of functions of the solution. In \cref{sec:approx-decoupling}, we study the approximate decoupling function, and in particular prove the key estimate \cref{prop:VarofStep}. In \cref{sec:analysis-markov-chain}, we define and study the approximating Markov chain and prove \cref{thm:mainthm}. Finally, in \cref{sec:analysis-second-moment}, we prove \cref{thm:nosecondmomentphasetrans}.

\subsection{Notation}\label{sec:notation}
We let $\scrX^\ell_t$ denote the space of $\clF_t$-measurable random fields $v$ such that the norm
\begin{equ}[e:triplenorm]
    \vvvert v\vvvert_\ell \coloneqq\sup_{x\in\RR^d}(\EE\lvert v(x)\rvert^\ell)^{1/\ell}
\end{equ}
is finite. We will mostly work with the $2$-norm, so we abbreviate $\vvvert\cdot\vvvert \coloneqq \vvvert\cdot\vvvert_2$ and $\scrX_t \coloneqq \scrX^2_t$.
For matrices, we use the notation $\lvert\cdot\rvert_\Frob$ for the Frobenius norm. Unless otherwise stated, we assume that $\RR^\fkm\otimes \RR^\fkn$ is equipped with the Frobenius norm.
We also use the Japanese bracket notation $\langle u\rangle = \sqrt{1+\lvert u\rvert^2}$ for $u\in\RR^k$ (for any $k\in\NN$).

\subsection{Acknowledgments}
A.~D.\ was supported by the National Science Foundation under grants DMS-2002118 and DMS-2346915.
X.-M.~L. acknowledges support by Swiss National Science Foundation project mint 10000849 and  UKRI  grant
EP/V026100/1.

\section{The forward-backward SDE\label{sec:FBSDE}}
In this section we establish some basic results about the forward-backward SDE. The notation and arguments in this section follow \cite{DG23a}.

Suppose that $g\colon [0,Q]\times\RR^\fkm\to \RR^\fkn\otimes\RR^\fkn$ is continuous and also that it is
Lipschitz in its second argument, uniformly in its first argument. For each $a\in\RR^\fkm$, we let $(\Theta^g_{a,Q})_{q\in [0,Q]}$ solve the SDE
\begin{subequations}\label{eq:Thetaproblem}
\begin{align}
    \dif \Theta^g_{a,Q}(q) &= g(Q-q,\Theta^g_{a,Q}(q))\dif B(q),\qquad q\in (0,Q);\label{eq:ThetaSDE}\\
    \Theta^g_{a,Q}(0) &= a.\label{eq:ThetaIC}
\end{align}
\end{subequations}
For a Lipschitz function $f\colon\RR^\fkm\to \RR^\fkm\otimes\RR^\fkn$, we define
\begin{equation}\clQ_fg(Q,a) = \EE[f(\Theta^g_{a,Q}(Q))],\label{eq:Qdef}\end{equation}
and we note that \cref{eq:FBSDE-J} is equivalent to the condition
\begin{equation}\label{eq:Qfp}\clQ_{\sigma/\sqrt{2(d-2)}}J = J.\end{equation}
We define the function space
\[
    \clA_Q \coloneqq \Bigl\{J\in\clC([0,Q]\times\RR^\fkm;\RR^\fkm\otimes \RR^\fkn)\ :\ \sup_{q\in [0,Q]} \Lip(J(q,\cdot))<\infty\Bigr\}.
\]
\begin{prop}\label{prop:Jcharacterization}Let $f\in\Lip(\RR^\fkm;\RR^\fkm\otimes\RR^\fkn)$. For any $Q\in [0,\Lip(f)^{-2})$, there is a unique $J\in\clA_Q$ such that $\clQ_fJ=J$.\end{prop}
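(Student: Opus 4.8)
The plan is to realize $J$ as the fixed point of the map $\clQ_f$ on the space $\clA_Q$, using the Banach fixed-point theorem with respect to a suitable weighted sup-norm. First I would fix a candidate complete metric space: for a weight parameter $\mu>0$ to be chosen, let
\[
    \lVert J\rVert_\mu \coloneqq \sup_{q\in[0,Q]}\ \sup_{a,a'\in\RR^\fkm,\ a\ne a'}\ \frac{\lvert J(q,a)-J(q,a')\rvert_\Frob}{\lvert a-a'\rvert}\ +\ \sup_{q\in[0,Q]}\lvert J(q,0)\rvert_\Frob,
\]
or more simply work on the affine subspace of $J\in\clA_Q$ with $J(q,0)$ prescribed and use the Lipschitz seminorm with an exponential-in-$q$ weight $\e^{-\mu q}$ à la Bielecki. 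One checks that $\clA_Q$ (or the relevant closed subset) with this norm is complete.

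Next I would verify that $\clQ_f$ maps $\clA_Q$ into itself. Given $J\in\clA_Q$, the coefficient $g(q,a)=J(q,a)$ is continuous and Lipschitz in $a$ uniformly in $q$, so the SDE \cref{eq:Thetaproblem} has a unique strong solution $\Theta^J_{a,Q}$, and standard SDE estimates (Gronwall on $\EE\lvert\Theta^J_{a,Q}(q)-\Theta^J_{a',Q}(q)\rvert^2$) give that $a\mapsto\Theta^J_{a,Q}(Q)$ is Lipschitz in $L^2$ with constant $\e^{C q\Lip(J(q,\cdot))^2}$; composing with the Lipschitz function $f$ and using $\clQ_fJ(Q,a)=\EE[f(\Theta^J_{a,Q}(Q))]$ shows $\clQ_fJ\in\clA_Q$, with an explicit bound on $\sup_q\Lip(\clQ_fJ(q,\cdot))$ in terms of $\Lip(f)$ and $Q$. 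Continuity in $q$ follows from continuity of the flow in the terminal time.

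The heart of the argument is the contraction estimate. Given $J_1,J_2\in\clA_Q$, I would couple the two SDEs by driving them with the same Brownian motion $B$ and estimate
\[
    \EE\bigl\lvert \Theta^{J_1}_{a,q}(q)-\Theta^{J_2}_{a,q}(q)\bigr\rvert^2
\]
via Itô's isometry: the increment of the difference picks up a term controlled by $\Lip(J_1)\,\EE\lvert\Theta^{J_1}-\Theta^{J_2}\rvert^2$ (handled by Gronwall) plus a source term $\EE\lvert J_1-J_2\rvert^2$ evaluated along the flow. After Gronwall and composition with $f$, one obtains
\[
    \lvert \clQ_fJ_1(q,a)-\clQ_fJ_2(q,a)\rvert_\Frob \le \Lip(f)\,\Phi(q)\ \sup_{s\le q}\lVert J_1(s,\cdot)-J_2(s,\cdot)\rVert_{\infty\text{-type}},
\]
where $\Phi(q)\to 0$ as $q\to 0$ and, crucially, $\Lip(f)^2 q\,\Phi(q)<1$ precisely on the range $q\in[0,\Lip(f)^{-2})$ — this is where the hypothesis $Q<\Lip(f)^{-2}$ enters. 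Choosing the Bielecki weight $\mu$ large enough absorbs the Gronwall constants and makes $\clQ_f$ a strict contraction on $[0,Q]$, so Banach's theorem yields the unique $J\in\clA_Q$ with $\clQ_fJ=J$.

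The main obstacle is getting the contraction constant sharp enough to cover the \emph{entire} interval $[0,\Lip(f)^{-2})$ in one step rather than just a small subinterval: a naive Gronwall bound typically forces $Q$ small. The fix is to track the Lipschitz constant of the flow $a\mapsto\Theta^J_{a,q}(q)$ itself along the iteration — since any $J$ in the relevant ball has $\Lip(J(q,\cdot))$ bounded by the fixed-point value, which is itself finite for $q<\Lip(f)^{-2}$ — and to feed that improved constant back into the isometry computation, so that the effective multiplier is exactly $\Lip(f)^2 q$ up to the weight, matching the stated threshold. (This is the point where the bound $\Lip(\sigma)<\sqrt{2(d-2)}$ in \cref{thm:mainthm}, i.e. $\Lip(f)<1$ with $f=\sigma/\sqrt{2(d-2)}$ and $Q=1$, becomes the natural hypothesis.) The rest — completeness of $\clA_Q$, measurability, and continuity in $q$ — is routine.
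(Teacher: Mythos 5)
Your overall strategy---obtaining $J$ as a fixed point of $\clQ_f$---is in the same family as the paper's, which simply checks the two inequalities needed (the Jensen bound \cref{eq:Qg} and $\lvert\EE[\sigma_1]-\EE[\sigma_2]\rvert^2_\Frob\le\EE\lvert\sigma_1-\sigma_2\rvert^2_\Frob$) and then invokes the proof of Proposition~2.5 of \cite{DG23a} verbatim. However, your sketch has a genuine gap at exactly the point you flag. The proposed fix for reaching the whole interval $[0,\Lip(f)^{-2})$ is circular: you define the invariant set by requiring $\Lip(J(q,\cdot))$ to be ``bounded by the fixed-point value,'' which presupposes the object you are constructing, and the assertion that the effective multiplier becomes ``exactly $\Lip(f)^2q$'' is never derived---your own It\^o-isometry/Gronwall computation produces exponential factors $\e^{Cq\Lip(J)^2}$, not $\Lip(f)^2q$. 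The mechanism that actually yields the stated threshold is a self-consistent a priori bound on Lipschitz constants: writing $\ell(q)=\Lip(J(q,\cdot))$ for a putative fixed point (or for the Picard iterates), the isometry gives $\ell(Q)^2\le\Lip(f)^2\exp\bigl(\int_0^Q\ell(q)^2\,\dif q\bigr)$, and comparison with the ODE $\chi'=\Lip(f)^2\e^{\chi}$, $\chi(0)=0$, gives $\ell(q)^2\le\Lip(f)^2/(1-\Lip(f)^2q)$, finite precisely for $q<\Lip(f)^{-2}$. It is this blow-up of the a priori bound---not a contraction factor of size $\Lip(f)^2Q<1$---that delimits the admissible range; with the bound in hand one can then close a contraction or Gronwall argument. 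Without it, a Bielecki weight cannot make $\clQ_f$ a contraction on all of $\clA_Q$, since the constants to be absorbed depend on the (unbounded over $\clA_Q$) Lipschitz constant of the input.

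Two further points. First, Banach's theorem on your weighted ball only gives uniqueness \emph{within that ball}, whereas the proposition asserts uniqueness in all of $\clA_Q$; you need the a priori estimate above to show that any fixed point in $\clA_Q$ automatically satisfies it and hence lies in the set where your contraction applies. Second, your contraction estimate is phrased against $\sup_a\lvert J_1(s,a)-J_2(s,a)\rvert$ (``$\infty$-type''), but differences of elements of $\clA_Q$ are in general unbounded in $a$ (they are merely Lipschitz), so you must either weight by $\langle a\rangle$ and use a moment bound $\EE\langle\Theta^J_{a,Q}(q)\rangle^2\le C\langle a\rangle^2$ on the flow, or work with the Lipschitz seminorm, which requires comparing flows driven by different coefficients from different starting points; neither is carried out in the sketch. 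These gaps are fixable, but as written the argument does not establish the result on the full range $Q\in[0,\Lip(f)^{-2})$ nor uniqueness in $\clA_Q$.
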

    \begin{proof}
        By Jensen's inequality and the convexity of $\lvert\cdot\rvert^2_\Frob$, the following analogue of \cite[(2.19)]{DG23a} holds for all $Q\in[0,\Lip(f)^{-2})$ and all $a\in\RR^\fkm$:
        \begin{equation}
            \lvert\clQ g(Q,a)\rvert^2_\Frob = \lvert\EE[\sigma(\Theta^g_{a,Q}(Q))]\rvert^2_\Frob \le \EE\lvert \sigma(\Theta^g_{a,Q}(Q))\rvert^2_\Frob.\label{eq:Qg}
        \end{equation}
        The proof of \cite[Proposition~2.5]{DG23a} now works essentially verbatim, with \cref{eq:Qg} replacing \cite[(2.25)]{DG23a} and the similarly obvious fact that
        \begin{equ}
            \lvert\EE[\sigma_1]-\EE[\sigma_2]\rvert^2_\Frob \le \EE\lvert\sigma_1-\sigma_2\rvert^2_\Frob
        \end{equ}
        (for any matrix-valued random variables $\sigma_1,\sigma_2$) replacing \cite[Proposition~A.2]{DG23a}.
    \end{proof}
    \cref{prop:Jcharacterization} and standard well-posedness results for SDEs immediately imply that if $\Lip(\sigma)<\sqrt{2(d-2)}$, then the FBSDE \cref{eq:FBSDE} has a unique solution.

    \subsection{Explicit solutions\label{subsec:explicit}}
    In this section we give a proof of \cref{prop:explicit-sqrtsum} regarding a 
    particular family of explicit solutions to the FBSDE.
    \begin{proof}[Proof of \cref{prop:explicit-sqrtsum}]
      Morally, the proof consists of checking that the solution \cref{eq:Jexplicit} satisfies the quasilinear heat equation \cref{eq:JPDEproblem}. However, since we have not established that PDE problem in this setting, we will work via Itô's formula directly.

        Let $f(q) = \alpha \e^{\beta q/(4(d-2))}$. With this definition and the choice \cref{eq:Jexplicit} of $J$, the SDE \crefrange{eq:FBSDE-SDE}{eq:FBSDE-ic} becomes
        \begin{align*}
            \dif \Gamma_{a,Q}(q) &= \sqrt{\frac{f(Q-q)+\beta \Gamma_{a,Q}(q)^2}{2(d-2)}}\dif B(q),\qquad q\in (0,Q);\\
            \Gamma_{a,Q}(0) &= a.
        \end{align*}
        Let \begin{equation}N_q(Q,a)\coloneqq \EE[J(Q-q,\Gamma_{a,Q}(q))].\label{eq:Ndef}\end{equation}
        We want to apply Itô's formula to $q\mapsto N_q(Q,a)$. This is not a problem if $\alpha>0$, since in that case $J$ is smooth. If $\alpha =0$, then $J$ fails to be smooth at $(0,0)$, but in that case the solution to the SDE will almost surely never reach 0, so we can use an approximation argument analogous to \cite[Section~3]{DG23a}.
        Thus, using that
        \[\partial_q J(q,b) = \frac{f'(q)}{\sqrt{2(d-2)(f(q)+\beta b^2)}}\]
        and
        \[\partial_b^2 J(q,b) = \frac{\beta f(q)}{\sqrt{2(d-2)}(f(q)+\beta b^2)^{3/2}},\]
        we conclude that
        \begin{equation}\label{eq:Nnochange}
            \partial_q N_q(Q,a) = \EE\left[\frac{-f'(Q-q)+\beta f(Q-q)/(4(d-2))}{\sqrt{2(d-2)(f(Q-q)+\beta \Gamma_{a,Q}(q)^2}}\right] =0.%
                            \end{equation}
        This means that
        \[\frac{\EE[\sigma(\Gamma_{a,Q}(Q))]}{\sqrt{2(d-2)}} \overset{\cref{eq:Jexplicit}}=  \EE[J(0,\Gamma_{a,Q}(Q))] \overset{\cref{eq:Ndef}}=  N_Q(Q,a) \overset{\cref{eq:Nnochange}}= N_0(Q,a) \overset{\cref{eq:Ndef}}= J(Q,a),
            \]
            which shows that \cref{eq:FBSDE-J} is satisfied, thus completing the proof.
    \end{proof}

    \section{Integral estimates\label{sec:integral-estimates}}
    In this section we collect some estimates and identities for various integrals that we will use in the sequel.
The proofs of these results are somewhat simpler when the Gaussian kernel is used as the mollifier in \cref{eq:Rdef}, so we stick to this choice for expositional clarity. Indeed, the results of the paper can be extended easily to more general choices of mollifier.

    We note that
    \begin{equation}
        \frac1{\lvert x\rvert^2} = \frac12(2\pi)^{d/2}\int_0^\infty \nu^{d/2-2}G_\nu(x)\,\dif \nu,\qquad x\in \RR^d\setminus\{0\},\label{eq:decompose1overx2}
    \end{equation}
    so that 
    \begin{equation}
        R^\rho(x) = G_{2\rho}*\lvert\cdot\rvert^{-2} = \frac12(2\pi)^{d/2}\int_0^\infty \nu^{d/2-2}G_{\nu+2\rho}(x)\,\dif \nu.\label{eq:decomposeRrho}
    \end{equation}
    We will also frequently use the fact that
    \begin{equation}
    (G_t*R^\rho)(0)\overset{\cref{eq:decomposeRrho}}{=}\frac12 \int_0^\infty \frac{\nu^{d/2-2}} {(t+\nu+\rho)^{d/2}}\,\dif\nu = \frac1{(d-2)(t+2\rho)},\label{eq:HKwithkernelat0}
    \end{equation}
    and its immediate consequence that
    \begin{equation}
        \iint R^\rho(y_1-y_2)\prod_{i=1}^2 G_{T-t}(x-y_i)\,\dif y_1\,\dif y_2
        = \frac1{2(d-2)(T-t+\rho)}.\label{eq:applyHKwithkernelat0}
\end{equation}
We will also use the fact that, since $\log\lvert\cdot\rvert^{-1}$ is a superharmonic function on $\RR^d$ (here we use the assumption that $d>2$), %
\begin{equation}
    (G_t*\log\lvert\cdot\rvert^{-1})(x)\le \log \lvert x\rvert^{-1}\qquad\text{for all }x\in\RR^d\setminus\{0\}.\label{eq:Gtlog-1ltlog}
\end{equation}
Moreover, we have an absolute constant $C<\infty$ such that $R^1(x)\le C\lvert x\rvert^{-2}$ for all $x\in\RR^d\setminus\{0\}$, which means that
\begin{equation}
  R^\rho(x)=\rho^{-1}R^1(\rho^{-1/2}x)\le C\rho^{-1}\lvert\rho^{-1/2}x\rvert^{-2}=C\lvert x\rvert^{-2}\qquad\text{for all }\rho>0.\label{eq:Rrhobound}
\end{equation}

We will also need the following estimate.
\begin{prop}\label{prop:integrateHKRrho}
    There is a constant $C$ depending only on $d$ such that
    \begin{equation}\label{eq:integrateHKRrho}
        \int_0^t (G_{2r}*R^\rho)(x)\,\dif r\le C\left(\log\frac{t^{1/2}}{\lvert x\rvert} + 1+\frac{\lvert x\rvert}{t^{1/2}}\right).
    \end{equation}
\end{prop}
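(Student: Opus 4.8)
The plan is to collapse the mollification into the heat kernel via the semigroup property, estimate the resulting kernel pointwise by interpolating between its near-diagonal size $\lvert x\rvert^{-2}$ and its value at the origin, and then integrate in $r$ and simplify.

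\emph{Step 1 (reduction).} By the heat semigroup property, $G_{2r}*R^\rho=G_{2r}*G_{2\rho}*R=G_{2(r+\rho)}*R$, which in the notation of \eqref{eq:Rdef} is precisely $R^{r+\rho}$, the Riesz kernel mollified at scale $r+\rho$. Thus it suffices to estimate $\int_0^t R^{r+\rho}(x)\,\dif r$. (One could instead expand via \eqref{eq:decompose1overx2}, compute $\int_0^t G_{\nu+2\rho+2r}(x)\,\dif r$ and then integrate over $\nu$, but the semigroup reduction keeps things cleaner.)

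\emph{Step 2 (pointwise estimate).} I would show that $R^{s}(x)\le C\,(\lvert x\rvert^2+s)^{-1}$ for every $s>0$, with $C$ depending only on $d$. Both ingredients are essentially in the text already. First, \eqref{eq:Rrhobound} applied with $s$ in place of $\rho$ gives $R^{s}(x)\le C\lvert x\rvert^{-2}$. Second, the decomposition \eqref{eq:decomposeRrho} writes $R^{s}$ as a positive superposition of heat kernels $G_{\nu+2s}$; since $G_\tau(x)\le G_\tau(0)$ pointwise, the same decomposition gives $R^{s}(x)\le R^{s}(0)$, and the Beta-integral computation behind \eqref{eq:HKwithkernelat0} (at $t=0$) gives $R^{s}(0)=\frac1{2(d-2)s}$. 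Taking the minimum of the two bounds and using $\min(A/p,B/q)\le (A+B)/(p+q)$ yields the asserted estimate.

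\emph{Step 3 (integration and cleanup).} Plugging $s=r+\rho$ and integrating, $\int_0^t R^{r+\rho}(x)\,\dif r\le\int_0^t\frac{C\,\dif r}{\lvert x\rvert^2+\rho+r}=C\log\frac{\lvert x\rvert^2+\rho+t}{\lvert x\rvert^2+\rho}$. Because $\lambda\mapsto(\lambda+t)/\lambda$ is non-increasing and $\lvert x\rvert^2\le\lvert x\rvert^2+\rho$, this is at most $C\log(1+t/\lvert x\rvert^2)$; in particular the dependence on $\rho$ cancels, so the final bound is uniform in $\rho$. It then remains to check the elementary inequality $\log(1+t/\lvert x\rvert^2)\le C'\bigl(\log\frac{\sqrt t}{\lvert x\rvert}+1+\frac{\lvert x\rvert}{\sqrt t}\bigr)$: writing $u=\lvert x\rvert/\sqrt t$ it reads $\log(1+u^{-2})\le C'(-\log u+1+u)$, which holds since $-\log u+1+u\ge 2$ for all $u>0$ while $\log(1+u^{-2})\le\log 2$ for $u\ge1$ and $\log(1+u^{-2})\le\log 2-2\log u$ for $u\le1$. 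The main difficulty is essentially nil: the only thing to notice is the semigroup identity in Step 1, after which Step 2 is a reuse of \eqref{eq:Rrhobound} and the value-at-the-origin formula and Step 3 is calculus; the mild bookkeeping point is confirming that the $\rho$-dependence disappears. Note this argument in fact yields the stronger bound $C\log(1+t/\lvert x\rvert^2)$, so the term $\lvert x\rvert/t^{1/2}$ on the right-hand side of \eqref{eq:integrateHKRrho} is not needed.
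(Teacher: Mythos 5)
Your proof is correct, and it takes a genuinely different route from the paper. The paper first derives an \emph{exact} identity, $\int_0^t (G_s*R^\rho)(x)\,\dif s=\tfrac1{d/2-1}\bigl((G_{2\rho}-G_{t+2\rho})*\log\lvert\cdot\rvert^{-1}\bigr)(x)$, by expanding the Riesz kernel as a superposition of Gaussians and identifying the gradient of the result; it then sandwiches the two log-convolutions using superharmonicity of $\log\lvert\cdot\rvert^{-1}$ (\cref{eq:Gtlog-1ltlog}) on one side and a lower bound $(G_t*\log\lvert\cdot\rvert^{-1})(x)\ge-\tfrac12\log t-C(1+\lvert x\rvert/t^{1/2})$ on the other, the latter relying on a gradient estimate that invokes an external anticoncentration-type reference; the $\lvert x\rvert/t^{1/2}$ term in the statement is exactly the artifact of that lower bound. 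You instead collapse the mollifier by the semigroup property, $G_{2r}*R^\rho=R^{r+\rho}$, prove the pointwise bound $R^s(x)\le C(\lvert x\rvert^2+s)^{-1}$ by interpolating between \cref{eq:Rrhobound} and the origin value $R^s(0)=\tfrac1{2(d-2)s}$ from \cref{eq:HKwithkernelat0}, and integrate directly; all steps check out (including the $\min(A/p,B/q)\le(A+B)/(p+q)$ interpolation and the final elementary inequality in $u=\lvert x\rvert/\sqrt t$, using that the right-hand side of \cref{eq:integrateHKRrho} is bounded below by $2C$). Your argument is more elementary, self-contained (no log-kernel lemmas or outside citation), and yields the strictly stronger bound $C\log(1+t/\lvert x\rvert^2)$, which stays bounded for $\lvert x\rvert\gtrsim t^{1/2}$ — consistent with the paper's own remark that its estimate is suboptimal in that regime. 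What the paper's route buys in exchange is an exact formula for the time integral, which could be useful if matching lower bounds or precise constants were ever needed; for the purposes of this proposition, your shorter argument suffices.
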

We note that the estimate \cref{eq:integrateHKRrho} is quite suboptimal when $\lvert x\rvert \gg t^{1/2}$, but in our applications this will not matter since the decay of the heat kernel will dominate in the superdiffusive regime.
We will prove \cref{prop:integrateHKRrho} as the consequence of a few lemmas.
\begin{lem}We have, for all $t,\rho >0$, that
    \begin{equation}
        \int_0^t (G_s*R^\rho)(x)\,\dif s=\frac1{d/2-1}\left((G_{2\rho}-G_{t+2\rho})*\log\lvert\cdot\rvert^{-1}\right)(x).\label{eq:intGsconvolvedwithRrho}
        \end{equation}
    \end{lem}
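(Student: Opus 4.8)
The plan is to reduce the identity to the fundamental-solution property of $\log\lvert\cdot\rvert^{-1}$ and then integrate in the heat-kernel time parameter. The starting observation is the distributional identity
\begin{equ}
  \Delta\log\lvert\cdot\rvert^{-1}=-(d-2)R\qquad\text{on }\RR^d,
\end{equ}
which holds precisely because $d>2$: both $\log\lvert\cdot\rvert^{-1}$ and $R=\lvert\cdot\rvert^{-2}$ are locally integrable, so no Dirac mass appears at the origin. This is the same fact that underlies the superharmonicity invoked in \cref{eq:Gtlog-1ltlog}, and I would verify it directly via Green's identity on $\{\lvert x\rvert>\eps\}$, checking that the boundary terms at $\lvert x\rvert=\eps$ vanish as $\eps\downarrow0$ when $d>2$.

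Next I would note that, since $\log\lvert\cdot\rvert^{-1}$ is a tempered distribution of logarithmic growth, $a\mapsto(G_a*\log\lvert\cdot\rvert^{-1})(x)$ is well defined and smooth on $(0,\infty)$, and, since the heat semigroup commutes with $\Delta$,
\begin{equ}
  \partial_a\bigl(G_a*\log\lvert\cdot\rvert^{-1}\bigr)=\tfrac12\,G_a*\bigl(\Delta\log\lvert\cdot\rvert^{-1}\bigr)=-\tfrac{d-2}{2}\,G_a*R.
\end{equ}
Integrating from $a=2\rho$ to $a=t+2\rho$ then gives
\begin{equ}
  \bigl((G_{2\rho}-G_{t+2\rho})*\log\lvert\cdot\rvert^{-1}\bigr)(x)=\tfrac{d-2}{2}\int_{2\rho}^{t+2\rho}(G_a*R)(x)\,\dif a.
\end{equ}

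Finally, substituting $a=s+2\rho$ and using the semigroup property together with \cref{eq:Rdef}, namely $G_{s+2\rho}*R=G_s*(G_{2\rho}*R)=G_s*R^\rho$, the right-hand side becomes $\tfrac{d-2}{2}\int_0^t(G_s*R^\rho)(x)\,\dif s$; dividing through by $d/2-1=(d-2)/2$ yields \cref{eq:intGsconvolvedwithRrho} (and incidentally shows the integral on the left is finite).

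The one genuinely delicate point is the first step: establishing that $\Delta\log\lvert\cdot\rvert^{-1}=-(d-2)R$ with \emph{no} extra Dirac contribution at the origin — this is exactly where the hypothesis $d>2$ enters, the $d=2$ case producing a $\delta_0$ — together with the (standard but worth stating) justification that $\Delta$ may be commuted past convolution with the Gaussian when applied to this tempered distribution. Everything else — smoothness in $a$, differentiation under the integral sign, the change of variables — is routine. One could alternatively avoid distributions altogether by inserting the representation \cref{eq:decomposeRrho} of $R^\rho$ into $\int_0^t G_s*R^\rho\,\dif s$, swapping the order of integration, and integrating the inner $s$-integral of $G_{s+\nu+2\rho}$ explicitly, but the computation above is cleaner.
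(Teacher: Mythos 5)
Your argument is correct, and it takes a genuinely different route from the one in the paper. The paper inserts the subordination formula \cref{eq:decomposeRrho}, performs the $s$-integral explicitly to arrive at $\int_0^\infty\bigl(G_{\nu+2\rho}(x)-G_{\nu+t+2\rho}(x)\bigr)\nu^{d/2-1}\,\dif\nu$, then computes the spatial gradient of this quantity, recognizes it (via \cref{eq:decompose1overx2}) as the gradient of $(G_{2\rho}-G_{t+2\rho})*\log\lvert\cdot\rvert^{-1}$ up to a constant, and finally fixes the additive constant by letting $\lvert x\rvert\to\infty$, where both sides vanish. You instead work from the Poisson-type identity $\Delta\log\lvert\cdot\rvert^{-1}=-(d-2)R$ in $d>2$, use the heat equation $\partial_a G_a=\tfrac12\Delta G_a$ to get $\partial_a\bigl(G_a*\log\lvert\cdot\rvert^{-1}\bigr)=-\tfrac{d-2}{2}\,G_a*R$, integrate in $a$ over $[2\rho,t+2\rho]$, and absorb the mollification through the semigroup property $G_{s+2\rho}*R=G_s*R^\rho$; the constants come out exactly as in \cref{eq:intGsconvolvedwithRrho}. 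Your route is shorter and more conceptual, and it avoids both the explicit Gaussian bookkeeping and the matching of constants at infinity; the paper's route is more computational but stays entirely within convergent integrals and the subordination identity it has already set up, with no distribution theory needed. One small caution: your parenthetical justification that no Dirac mass appears ``because both $\log\lvert\cdot\rvert^{-1}$ and $\lvert\cdot\rvert^{-2}$ are locally integrable'' is not by itself a valid principle (in $d=2$ the pointwise Laplacian is locally integrable, yet a $\delta_0$ does appear); what actually rules out the singular part is exactly the Green's-identity computation you propose, where the boundary term scales like $\eps^{d-2}$ and so vanishes only for $d>2$ — so keep that verification, not the heuristic, as the stated reason.
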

    \begin{proof}
        We can compute
        \begin{align}
            F(x)&\coloneqq \frac{d-2}{(2\pi)^{d/2}}\int_0^t (G_s*R^\rho)(x)\,\dif s\label{eq:Fdef}\\
            \overset{\cref{eq:decomposeRrho}}&=(d/2-1)\int_0^t\int_0^\infty \nu^{d/2-2}G_{\nu+s+2\rho}(x)\,\dif \nu \,\dif s\notag\\
                                             &= (d/2-1)\int_0^\infty G_{\nu + 2\rho}(x) \int_0^{t\wedge \nu}(\nu-s)^{d/2-2}\,\dif s\,\dif\nu\notag\\
                                             &= \int_0^\infty G_{\nu+2\rho}(x)\nu^{d/2-1}\,\dif \nu -\int_t^\infty G_{\nu+2\rho}(x)(\nu-t)^{d/2-1}\,\dif \nu\notag\\
                                             &= \int_0^\infty \left(G_{\nu+2\rho}(x)-G_{\nu+t+2\rho}(x)\right)\nu^{d/2-1}\,\dif\nu.\label{eq:rhoasGaussian}
        \end{align}
        Now, from this expression, we can write
        \begin{align*}
            \nabla F(x) &= \int_0^\infty \left(\nabla G_{\nu+2\rho}(x)-\nabla G_{\nu+t+2\rho}(x)\right)\nu^{d/2-1}\,\dif\nu\\
                        &= (G_{2\rho}*Q)(x)-(G_{t+2\rho}*Q)(x),
        \end{align*}
    where we have defined
    \begin{align*}
        Q(x)\coloneqq \int_0^\infty \nabla G_\nu(x)\nu^{d/2-1}\,\dif \nu &= -x\int_0^\infty \nu^{d/2-2}G_\nu(x)\,\dif \nu\\
        \overset{\cref{eq:decompose1overx2}}&= -\frac{2^{1-d/2}x}{\pi^{d/2}\lvert x\rvert^2} = \frac{2^{1-d/2}}{\pi^{d/2}}\nabla(\log\lvert\cdot\rvert^{-1})(x).
    \end{align*}
    Therefore, we have
    \[
        \nabla F(x) = 2^{1-d/2}{\pi^{d/2}}\nabla\left((G_{2\rho}-G_{t+2\rho})*\log\lvert \cdot\rvert^{-1}\right)(x).
    \]
    Now it is clear from the definition \cref{eq:Fdef} that $\lim_{\lvert x\rvert\to\infty} F(x) = 0$, and since we also observe that
    \[
        \lim_{\lvert x\rvert \to\infty}\frac{2^{1-d/2}}{\pi^{d/2}}\left((G_{2\rho}-G_{t+2\rho})*\log\lvert\cdot\rvert^{-1}\right)(x)=0,
    \]
    we see that
    \[
        F(x) = \frac{2^{1-d/2}}{\pi^{d/2}}\left((G_{2\rho}-G_{t+2\rho})*\log\lvert\cdot\rvert^{-1}\right)(x).
    \]
Again recalling \cref{eq:Fdef} we obtain \cref{eq:intGsconvolvedwithRrho}.
\end{proof}

\begin{prop}
    There is a constant $C$ depending only on $d$ such that,
for any $t>0$ and $x\in\RR^d$, we have
\begin{equation}
    (G_t*\log\lvert\cdot\rvert^{-1})(x) \ge -\frac12\log t-C\left(1+\frac{\lvert x\rvert}{t^{1/2}}\right).\label{eq:logconvlb}
\end{equation}
\end{prop}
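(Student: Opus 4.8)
The plan is to reduce to the case $t=1$ by Brownian scaling and then apply Jensen's inequality to the concavity of the logarithm.

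First I would substitute $y=\sqrt t\,z$ in the convolution integral. Using $G_t(x-\sqrt t z)=t^{-d/2}G_1(x/\sqrt t-z)$ together with $\log\lvert\sqrt t z\rvert^{-1}=-\tfrac12\log t+\log\lvert z\rvert^{-1}$ and $\int G_1(w-z)\,\dif z = 1$, this gives the exact identity
\[
(G_t*\log\lvert\cdot\rvert^{-1})(x)=-\tfrac12\log t+(G_1*\log\lvert\cdot\rvert^{-1})(x/\sqrt t).
\]
So it suffices to prove that $(G_1*\log\lvert\cdot\rvert^{-1})(w)\ge -C(1+\lvert w\rvert)$ for all $w\in\RR^d$, with $C$ depending only on $d$.

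Next I would write $(G_1*\log\lvert\cdot\rvert^{-1})(w)=-\EE[\log\lvert Z\rvert]$, where $Z$ is a Gaussian vector with mean $w$ and covariance $\Id_d$. Since $d\ge 3$, the function $z\mapsto\log\lvert z\rvert$ is locally integrable near the origin, and it is bounded above by $\lvert z\rvert$, so $\EE[\log\lvert Z\rvert]$ is a well-defined finite real number. By Jensen's inequality (concavity of $\log$ on $(0,\infty)$) applied to the random variable $\lvert Z\rvert^2$,
\[
\EE[\log\lvert Z\rvert]=\tfrac12\EE[\log\lvert Z\rvert^2]\le\tfrac12\log\EE[\lvert Z\rvert^2]=\tfrac12\log(\lvert w\rvert^2+d).
\]
An elementary estimate then gives $\tfrac12\log(\lvert w\rvert^2+d)\le C(1+\lvert w\rvert)$ (for $\lvert w\rvert\le 1$ the left side is at most $\tfrac12\log(1+d)$; for $\lvert w\rvert>1$ it is at most $\log\lvert w\rvert+\tfrac12\log(1+d)\le\lvert w\rvert+\tfrac12\log(1+d)$). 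Combining with the scaling identity yields \cref{eq:logconvlb}.

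There is no real obstacle here; the only point requiring a moment's care is the well-definedness (finiteness, not $-\infty$) of $\EE[\log\lvert Z\rvert]$, which follows from local integrability of $\log\lvert z\rvert$ in any dimension together with $\log\lvert z\rvert\le\lvert z\rvert$ to control the positive part. I note that the bound $\tfrac12\log(\lvert w\rvert^2+d)$ is far from sharp — the true behavior is $\sim-\log\lvert w\rvert$ as $\lvert w\rvert\to\infty$ — but the stated linear-in-$\lvert w\rvert$ bound is all that the applications require, just as in the remark following \cref{prop:integrateHKRrho}.
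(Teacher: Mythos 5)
Your argument is correct, and it takes a genuinely different route from the paper. The paper fixes the value at the origin, computing $f(0)=-\frac12\log t+\int G_1(z)\log\lvert z\rvert^{-1}\,\dif z$ by scaling, and then controls the spatial variation through the uniform gradient bound $\lvert\nabla f(y)\rvert\le t^{-1/2}\int G_1(z)\lvert z\rvert^{-1}\,\dif z$, whose key step $\int G_t(y-z)\lvert z\rvert^{-1}\,\dif z\le\int G_t(z)\lvert z\rvert^{-1}\,\dif z$ requires the rearrangement-type inequality cited as \cite{Ibr56}; the conclusion then follows from the mean value theorem. You instead scale out $t$ exactly and apply Jensen's inequality to the concave logarithm, obtaining $\EE[\log\lvert Z\rvert]\le\frac12\log(\lvert w\rvert^2+d)$ for $Z\sim N(w,\Id_d)$ (valid since $\lvert Z\rvert^2>0$ almost surely, $\EE\lvert Z\rvert^2=\lvert w\rvert^2+d<\infty$, and you correctly verify that $\EE[\log\lvert Z\rvert]$ is finite). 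This is entirely self-contained — no external lemma is needed — and your intermediate bound $(G_1*\log\lvert\cdot\rvert^{-1})(w)\ge-\frac12\log(\lvert w\rvert^2+d)$ is in fact logarithmic rather than linear in $\lvert w\rvert$, hence asymptotically sharp at leading order as $\lvert w\rvert\to\infty$ (your closing aside that it is ``far from sharp'' applies to the final linear relaxation $-C(1+\lvert w\rvert)$, not to the Jensen bound itself); after undoing the scaling it yields \cref{eq:logconvlb} with a constant depending only on $d$, exactly as required.
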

As with \cref{prop:integrateHKRrho}, this bound is highly suboptimal for $x\gg t^{1/2}$, but again we do not need much precision in that regime.
\begin{proof}
    Define $f(x) \coloneqq (G_t*\log\lvert \cdot\rvert^{-1})(x)$. We can compute
    \begin{equation}
        \begin{aligned}
            f(0) = \int G_t(x)\log\lvert x\rvert^{-1}\,\dif x &= \int G_1 (x)\log \lvert t^{1/2}x\rvert^{-1}\,\dif x\\
                                                         &= -\frac12 \log t + \int G_1(x)\log\lvert x\rvert^{-1}\,\dif x.
        \end{aligned}\label{eq:f0}
    \end{equation}
    We can also compute
    \[\nabla f(y) = -\int G_t(y-x)\frac x{\lvert x\rvert^2}\,\dif x,\]
    so
    \begin{equation}
        \lvert \nabla f(y)\rvert \le \int \frac{G_t(y-x)}{\lvert x\rvert}\,\dif x\le \int \frac{G_t(x)}{\lvert x\rvert}\,\dif x=t^{-1/2}\int \frac{G_1(x)}{\lvert x\rvert}\,\dif x.\label{eq:gradientfest}
    \end{equation}
    Here the second inequality is by \cite{Ibr56}. Combining \cref{eq:f0,eq:gradientfest}, we obtain \cref{eq:logconvlb}.
\end{proof}
\begin{proof}[Proof of \cref{prop:integrateHKRrho}]
We can estimate
\begin{align*}
    \int_0^t& (G_{2s}*R^\rho)(x)\,\dif s = \frac12 \int_0^{2t}(G_s*R^\rho)(x)\,\dif s\\
    \overset{\cref{eq:intGsconvolvedwithRrho}}&= \frac1{d-2}\left((G_{2\rho}-G_{2(t+\rho)})*\log\lvert \cdot\rvert^{-1}\right)(x)\\
                                              &\le \frac1{d-2}\left(\log\lvert x\rvert^{-1}+\frac12\log t + C\left(1+\frac{\lvert x\rvert}{t^{1/2}}\right)\right)\le C\left(\log\frac{t^{1/2}}{\lvert x\rvert}+1+\frac{\lvert x\rvert}{t^{1/2}}\right),
\end{align*}
with the second inequality by \cref{eq:Gtlog-1ltlog,eq:logconvlb},  and where $C$ has changed in the last inequality.
\end{proof}
\section{Moment bound\label{sec:moment-bd}}
In this section, we prove a bound on the moments of $u_t(x)$. The following proposition and its proof are adaptations of \cite[Proposition~3.4]{DG23a}. Recall the notation  $\vvvert \cdot\vvvert_\ell$ from \cref{sec:notation}, as well as the Japanese bracket $\langle\cdot\rangle$.
\begin{prop}\label{prop:momentbd}
    Suppose that $\beta < \sqrt{2(d-2)}$ and $M\in [0,\infty)$ are such that $\lvert\sigma(u)\rvert^2_\Frob \le M + \beta^2\lvert u\rvert^2$ for all $u\in\RR^\fkm$. Suppose
     also that $(u_t(x))$ solves \cref{eq:ueqn}. Then, for all $T>0$, we have a constant $C = C(M,\beta,T)\in (1,\infty)$ and an $\ell_0 = \ell_0(M,\beta,T)>2$ such that, if $\ell \in [2,\ell_0]$, $t\in [0,T]$, and $\rho\in(0,C^{-1})$, then 
    \begin{equation}
        \vvvert u_t\vvvert_\ell \le C\langle \vvvert u_0\vvvert_\ell\rangle.\label{eq:momentbd}
    \end{equation}
\end{prop}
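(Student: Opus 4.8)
The plan is to run a Gronwall-type absorption argument directly on the mild formulation \eqref{eq:umild}, in the spirit of \cite[Proposition~3.4]{DG23a}, whose key quantitative inputs are the $L^\ell$-version of the Burkholder--Davis--Gundy inequality and the sharp identity \eqref{eq:applyHKwithkernelat0}. Fix $\ell\ge2$ and set $H(t)\coloneqq\sup_{s\in[0,t]}\vvvert u_s\vvvert_\ell^2$. Since $\sigma$ is globally Lipschitz, standard well-posedness theory for \eqref{eq:umild} (a Picard iteration in the space $\scrX^\ell_t$ of \cref{sec:notation}) gives $H(t)<\infty$ for every fixed $t$ and $\rho>0$; this finiteness — even though the bound it comes with is \emph{not} uniform in $\rho$ — is all one needs to legitimize the absorption step below. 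Write $I_t(x)\coloneqq\int_0^t\int G_{t-s}(x-y)\,\sigma(u_s(y))\,\dif W^\rho_s(y)$ for the stochastic term in \eqref{eq:umild}, so that $u_t=\clG_t u_0+(\log\rho^{-1})^{-1/2}I_t$, and recall $\vvvert\clG_t u_0\vvvert_\ell\le\vvvert u_0\vvvert_\ell$ by Jensen's inequality.

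The first step is to bound $\vvvert I_t\vvvert_\ell$. For fixed $t$ and $x$, the process $s\mapsto\int_0^s\int G_{t-r}(x-y)\,\sigma(u_r(y))\,\dif W^\rho_r(y)$, $s\in[0,t]$, is a continuous $\RR^\fkm$-valued martingale with terminal value $I_t(x)$ whose quadratic variation is, by \eqref{eq:noise-covariance} and $\tr[AB^\top]\le\lvert A\rvert_\Frob\lvert B\rvert_\Frob$, bounded by $\int_0^t\iint G_{t-s}(x-y_1)G_{t-s}(x-y_2)R^\rho(y_1-y_2)\lvert\sigma(u_s(y_1))\rvert_\Frob\lvert\sigma(u_s(y_2))\rvert_\Frob\,\dif y_1\,\dif y_2\,\dif s$. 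Applying Burkholder--Davis--Gundy (with a constant $c_\ell$ normalized so that $c_2=1$ and $c_\ell\to1$ as $\ell\downarrow2$), then Minkowski's integral inequality to pull the $L^{\ell/2}(\Omega)$-norm inside the $\dif y_1\,\dif y_2\,\dif s$ integral, then Cauchy--Schwarz in $\Omega$ on the product $\lvert\sigma(u_s(y_1))\rvert_\Frob\lvert\sigma(u_s(y_2))\rvert_\Frob$, and finally the growth hypothesis in the form $(\EE\lvert\sigma(u_s(y))\rvert_\Frob^\ell)^{1/\ell}\le\sqrt M+\beta\vvvert u_s\vvvert_\ell$ (which uses $\sqrt{M+\beta^2v^2}\le\sqrt M+\beta v$ and the $L^\ell$ triangle inequality), I arrive at
\begin{align*}
\vvvert I_t\vvvert_\ell^2&\le c_\ell\int_0^t\bigl(\sqrt M+\beta\vvvert u_s\vvvert_\ell\bigr)^2\,\nu_t(\dif s),\qquad\text{where}\\
\nu_t(\dif s)&\coloneqq\Bigl(\iint G_{t-s}(x-y_1)G_{t-s}(x-y_2)R^\rho(y_1-y_2)\,\dif y_1\,\dif y_2\Bigr)\dif s=\frac{\dif s}{2(d-2)(t-s+\rho)}
\end{align*}
by \eqref{eq:applyHKwithkernelat0}. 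It is essential that the Cauchy--Schwarz step keeps the two copies of $\sigma$ separated, so that \eqref{eq:applyHKwithkernelat0} supplies the sharp constant $\tfrac1{2(d-2)}$: splitting the product by Young's inequality instead would only produce $(G_{t-s}*R^\rho)(0)=\tfrac1{(d-2)(t-s+2\rho)}$ via \eqref{eq:HKwithkernelat0}, a factor of two worse — exactly the gap between $\beta<\sqrt{2(d-2)}$ and $\beta<\sqrt{d-2}$.

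The second step is to close the inequality. Combining the above with $u_t=\clG_t u_0+(\log\rho^{-1})^{-1/2}I_t$, the elementary bounds $(a+b)^2\le(1+\epsilon')a^2+(1+1/\epsilon')b^2$ and $(\sqrt M+\beta v)^2\le(1+\epsilon)\beta^2v^2+(1+1/\epsilon)M$, the monotonicity of $H$ (so that $\vvvert u_s\vvvert_\ell^2\le H(s)\le H(t)$ for $s\le t$), and $\int_0^t\nu_t(\dif s)=\tfrac1{2(d-2)}\log(1+t/\rho)$, I obtain, after taking the supremum over $t'\le t$, for all $t\le T$,
\begin{equation*}
H(t)\le(1+\epsilon')\vvvert u_0\vvvert_\ell^2+(1+1/\epsilon')\,c_\ell\,\bigl[(1+1/\epsilon)M+(1+\epsilon)\beta^2H(t)\bigr]\,\frac{\log(1+t/\rho)}{2(d-2)\log\rho^{-1}}.
\end{equation*}
Since $\beta^2/(2(d-2))<1$ strictly, I first fix $\epsilon>0$ with $(1+\epsilon)\beta^2/(2(d-2))<1$, then fix $\epsilon'$ large, $\ell_0>2$ close to $2$, and $\delta>0$ small so that $\kappa\coloneqq(1+1/\epsilon')\bigl(\sup_{2\le\ell\le\ell_0}c_\ell\bigr)(1+\epsilon)(1+\delta)\beta^2/(2(d-2))<1$ — possible because $c_\ell\to1$ as $\ell\downarrow2$. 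Finally, since $\log(1+t/\rho)/\log\rho^{-1}\to1$ uniformly on $t\in[0,T]$ as $\rho\downarrow0$, there is $\rho_0=\rho_0(\beta,d,T)>0$ with $\log(1+t/\rho)/\log\rho^{-1}\le1+\delta$ for all $t\le T$ and $\rho<\rho_0$. For such $\rho$ and any $\ell\in[2,\ell_0]$ the displayed inequality reads $H(t)\le(1+\epsilon')\vvvert u_0\vvvert_\ell^2+C_1+\kappa H(t)$ for a finite $C_1=C_1(M,\beta,d,T)$; since $H(t)<\infty$, subtracting $\kappa H(t)$ gives $H(t)\le(1-\kappa)^{-1}\bigl((1+\epsilon')\vvvert u_0\vvvert_\ell^2+C_1\bigr)\le C^2\langle\vvvert u_0\vvvert_\ell\rangle^2$ for a suitable $C=C(M,\beta,d,T)\ge\max(1,\rho_0^{-1})$, and taking square roots yields \eqref{eq:momentbd}.

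The step I expect to be the main obstacle is precisely this constant bookkeeping: the argument closes only because the coefficient of $H(t)$ can be made strictly less than $1$, and this requires simultaneously (i) Cauchy--Schwarz rather than Young's inequality, to retain the sharp $\tfrac1{2(d-2)}$ of \eqref{eq:applyHKwithkernelat0}; (ii) restricting to $\ell$ near $2$, so that the Burkholder--Davis--Gundy constant is harmless; and (iii) restricting to small $\rho$, so that $\tfrac1{\log\rho^{-1}}\int_0^t\tfrac{\dif s}{t-s+\rho}$ is close to $1$ rather than merely bounded. The strict hypothesis $\beta<\sqrt{2(d-2)}$ provides exactly the slack needed to absorb the resulting multiplicative error factors.
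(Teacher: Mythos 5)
Your argument is correct, and it reaches the same threshold $\beta<\sqrt{2(d-2)}$ through the same three pillars as the paper — the sharp identity \cref{eq:applyHKwithkernelat0}, the restriction to $\ell$ near $2$, and the smallness of $\log(1+T\rho^{-1})/\log\rho^{-1}-1$ for small $\rho$ — but the middle of the proof is executed differently. The paper defines the martingale $U_t=\clG_{T-t}u_t(x)$, applies Itô's formula to $\lvert U_t\rvert^\ell$, and handles the cross term $\EE[\lvert U_t\rvert^{\ell-2}\lvert\sigma(u_t(y_1))\rvert_\Frob\lvert\sigma(u_t(y_2))\rvert_\Frob]$ by an optimized Young product inequality, closing a Gronwall-type bound for $\sup_t\vvvert u_t\vvvert_\ell^\ell$; the explicit Itô constant $\ell(\ell-1)/2$ plays exactly the role of your $c_\ell$ and makes the near-optimality at $\ell=2$ transparent. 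You instead work on the mild formulation \cref{eq:umild} with BDG, Minkowski's integral inequality and Cauchy--Schwarz, and close the bound for $\sup_t\vvvert u_t\vvvert_\ell^2$. The one assertion you should justify is that the BDG constant can be taken with $c_2=1$ and $c_\ell\to1$ as $\ell\downarrow2$: this is true, but it is not how BDG is usually quoted, and the cleanest proof is precisely the paper's route (Itô for $\lvert M\rvert^\ell$, then Hölder and Doob, giving $c_\ell=\tfrac{\ell(\ell-1)}2(\tfrac{\ell}{\ell-1})^{\ell-2}$), so your argument implicitly contains the paper's computation; since the whole absorption hinges on $\kappa<1$, this point cannot be waved at.

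Two minor slips, neither fatal. First, $\log(1+t/\rho)/\log\rho^{-1}$ does \emph{not} converge to $1$ uniformly on $[0,T]$ (it vanishes at $t=0$); what you need and what is true is only the uniform upper bound $\le 1+\delta$, which follows from monotonicity in $t$ and the value at $t=T$. Second, your aside that using Young's inequality on the product $\lvert\sigma(u_s(y_1))\rvert_\Frob\lvert\sigma(u_s(y_2))\rvert_\Frob$ would cost a factor $2$ is incorrect: after bounding $\EE\lvert\sigma(u_s(y_i))\rvert_\Frob^2$ uniformly in $y_i$, the two Gaussians are still integrated jointly against $R^\rho$, so one again gets $(G_{2(t-s)}*R^\rho)(0)=\tfrac1{2(d-2)(t-s+\rho)}$ — indeed the paper's proof uses exactly such a Young-type splitting and loses nothing. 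Your Cauchy--Schwarz route is equally fine; the dichotomy you describe is not the reason the constant comes out sharp.
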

\begin{proof}
    We define the martingale
    \begin{equ}
        U_t = \clG_{T-t}u_t(x)\overset{\cref{eq:umild}}= \clG_Tu_0(x) +\frac1{\sqrt{\log\rho^{-1}}}\int_0^t \clG_{T-s}[\sigma(u_s)\,\dif W^\rho_s](x)\;,
    \end{equ}
    as well as the differential quadratic variation
    \begin{equation}\label{eq:UQVAdef}
        \begin{aligned}
            A_t &\coloneqq \frac{\dif}{\dif t}[U]_t \\&= \frac1{\log\rho^{-1}}\iint R^\rho(y_1 - y_2)G_{T-t}(x-y_1)G_{T-t}(x-y_2)\sigma(u_t(y_1))\sigma(u_t(y_2))^\top\,\dif y_1\,\dif y_2.
    \end{aligned}
    \end{equation}
Here, the notation $[U]_t$ denotes the matrix whose $(i,j)$ entry is the quadratic covariation of the $i$th and $j$th components of $(U_t)_t$ at time $t$. We also define $Y_t = \lvert U_t\rvert^\ell$. By Itô's formula (as in \cite[(4.15)]{DG23a}), we obtain
\[
    \dif Y_t = \ell\lvert U_t\rvert^{\ell-2}U_t\cdot \dif U_t + \frac12 \tr\left[(\ell(\ell-2)\lvert U_t\rvert^{\ell-4}U_t^{\otimes 2}+\ell\lvert U_t\rvert^{\ell-2}\Id_\fkm)\dif [U]_t\right]\;.
\]
Integrating and taking expectations, noting that the local martingale term vanishes since it is indeed a martingale as $U_t$ has all moments bounded uniformly in $t$ in a compact set (for fixed $\rho$), we see that 
\begin{align}
    \EE&\lvert u_T(x)\rvert^\ell = \EE Y_T\notag\\
       &\le \vvvert u_0\vvvert^\ell_\ell +\frac12\EE\left[\int_0^T \tr\left[(\ell(\ell-2)\lvert U_t\rvert^{\ell-4}U_t^{\otimes 2}+\ell\lvert U_t\rvert^{\ell-2}\Id_\fkm)\,\dif [U]_t\right]\right]\notag\\
       & = \vvvert u_0\vvvert^\ell_\ell +\frac12\int_0^T \tr\EE\left[(\ell(\ell-2)\lvert U_t\rvert^{\ell-4}U_t^{\otimes 2}+\ell\lvert U_t\rvert^{\ell-2}\Id_\fkm)A_t\right]\,\dif t.\label{eq:exptY}
\end{align}
Now we estimate
\begin{equation}
    \tr\left[\lvert U_t\rvert^{\ell-4}U_t^{\otimes 2}A_t\right] \le \lvert U_t\rvert^{\ell-2}\tr A_t,
\end{equation}
so \cref{eq:exptY} becomes
\begin{equation}\label{eq:Utbd}
\EE\lvert u_T(x)\rvert^\ell \le \vvvert u_0\vvvert^\ell_\ell +\frac12(\ell-1)\int_0^T\EE \left[\lvert U_t\rvert^{\ell-2}\tr A_t\right]\,\dif t.
\end{equation}
Thus we want to estimate the quantity
\begin{equation}\label{eq:momentbd-step3}
    \begin{aligned}
\EE&\left[\lvert U_t\rvert^{\ell-2}\tr A_t\right]\\
   &\le\frac1{\log\rho^{-1}}\EE\left[\lvert U_t\rvert^{\ell-2}\iint R^\rho(y_1-y_2)\prod_{i=1}^2 (G_{T-t}(x-y_i)\lvert\sigma(u_t(y_i))\rvert_\Frob\,\dif y_1\,\dif y_2\right],
\end{aligned}
\end{equation}
where we used Hölder's inequality, the definition \cref{eq:UQVAdef} of $A_t$, and the submultiplicativity of the Frobenius norm.
The assumption on $\sigma$, along with Hölder's inequality as in \cite[(4.21)]{DG23a}, imply that
\[\lvert \sigma(u)\rvert^\ell_\Frob\le 2^{\ell/2-1}[M^{\ell/2}+\beta^\ell\lvert u\rvert^\ell].\]
Using Young's product inequality and Jensen's inequality, we conclude that for any (deterministic) $Z\in(0,\infty)$,
\begin{align*}
    \EE&\left[\lvert U_t\rvert^{\ell-2}\lvert \sigma(u_t(y_1))\rvert_\Frob\lvert \sigma(u_t(y_2))\rvert_\Frob\right]\\
       &=\EE\left[\left(Z^{\frac{\ell-2}{\ell}}\lvert U_t\rvert^{\ell-2}\right)\left(Z^{-\frac{\ell-2}{2\ell}}\lvert \sigma(u_t(y_2))\rvert_\Frob\right)\left(Z^{-\frac{\ell-2}{2\ell}}\lvert \sigma(u_t(y_2))\rvert_\Frob\right)\right]\\
       &\le (1-2/\ell)Z\EE\lvert U_t\rvert^\ell +\frac1{\ell Z^{\frac{\ell-2}2}}\sum_{i=1}^2 \EE\lvert\sigma(u_t(y_i))\rvert^\ell_\Frob\\
       &\le(1-2/\ell)Z\vvvert u_t\vvvert^\ell_\ell + \frac{2^{\ell/2}}{\ell Z^{\frac{\ell-2}2}}\left(M^{\ell/2}+\beta^\ell\vvvert u_t\vvvert^\ell_\ell\right).
\end{align*}
We minimize the right side by taking
\[
    Z\coloneqq 2^{\frac{\ell-2}\ell}\left(\beta^\ell+\frac{M^{\ell/2}}{\vvvert u_t\vvvert_\ell^\ell}\right)^{2/\ell},
\]
which yields
\begin{align}
\EE & \left[\lvert U_t\rvert^{\ell-2}\lvert\sigma(u_t(y_1))\rvert_{\Frob}\lvert\sigma(u_t(y_2))\rvert_{\Frob}\right]\nonumber \\
 & \le(1-2/\ell)2^{(\ell-2)/2}\left(\beta^{\ell}+\frac{M^{\ell/2}}{\vvvert u_{\ell}\vvvert_{\ell}^{\ell}}\right)\vvvert u_t\vvvert_{\ell}^{\ell}+\frac{2^{\frac{\ell-2}{\ell}}\left(M^{\ell/2}+\beta^{\ell}\vvvert u_t\vvvert_{\ell}^{\ell}\right)}{\ell\left(\beta^{\ell}+\frac{M^{\ell/2}}{\vvvert u_t\vvvert_{\ell}^{\ell}}\right)^{(\ell-2)/\ell}}\nonumber \\
 & =2^{\frac{\ell-2}2}(1-1/\ell)\vvvert u_t\vvvert_{\ell}^{\ell-2}\left(M^{\ell/2}+\beta^{\ell}\vvvert u_t\vvvert_{\ell}^{\ell}\right)^{2/\ell}\nonumber \\
 & \le2^{\frac{\ell-2}2}\left(1-1/\ell\right)\left(M\vvvert u_t\vvvert_{\ell}^{\ell-2}+\beta^{\ell}\vvvert u_t\vvvert_{\ell}^{\ell}\right)\nonumber \\
 & \le2^{1-2/\ell}(1-1/\ell)\left(\frac2{\ell}M^{\ell/2}+\left(\beta^{\ell}+1-2/\ell\right)\vvvert u_t\vvvert_{\ell}^{\ell}\right)\nonumber \\
 & \le\frac2{\ell}M^{\ell/2}+\left(\beta^{\ell}+1-2/\ell\right)\vvvert u_t\vvvert_{\ell}^{\ell}.\label{eq:applyJensenparty}
\end{align}

Using \cref{eq:applyJensenparty} and \cref{eq:applyHKwithkernelat0}
in \cref{eq:momentbd-step3}, and then substituting into \cref{eq:Utbd},
we get
\begin{equation}
\vvvert u_{T}\vvvert_{\ell}^{\ell}\le\vvvert u_{0}\vvvert_{\ell}^{\ell}+\frac{\ell(\ell-1)}{4(d-2)\log\rho^{-1}}\int_{0}^{T}\frac{\frac2{\ell}M^{\ell/2}+(\beta^{\ell}+1-2/\ell)\vvvert u_t\vvvert_{\ell}^{\ell}}{T-t+\rho}\,\dif t.\label{eq:momentbd-step3-1}
\end{equation}
Defining $\Upsilon^{\ell}\coloneqq\sup\limits _{t\in[0,T]}\vvvert u_t\vvvert_{\ell}^{\ell}$,
we get
\begin{equation}
\Upsilon^{\ell}\le\vvvert u_{0}\vvvert_{\ell}^{\ell}+\frac{\ell(\ell-1)}{4(d-2)}\left(2M^{\ell/2}/\ell+(\beta^{\ell}+1-2/\ell)\Upsilon^{\ell}\right)\log_{\rho^{-1}}(1+T\rho^{-1}).\label{eq:upsilonbd}
\end{equation}
Now, since we assumed that $\beta<\sqrt{2(d-2)}$, there is an $\ell_{0}>2$
such that, if $\ell\in[2,\ell_{0}]$, then for sufficiently small
$\rho$ we have
\[
\frac{\ell(\ell-1)}{4(d-2)}(\beta^{\ell}+1-2/\ell)\log_{\rho^{-1}}(1+T\rho^{-1})<1,
\]
and in this case we can rearrange \cref{eq:upsilonbd} to imply
\cref{eq:momentbd} (with an appropriate choice of~$C$).
\end{proof}

\section{Turning off the noise\label{sec:turn-off-noise}}

In this section, we implement the ``turning off the noise'' strategy developed in \cite{DG22,DG23a} for the case of compactly
supported noise covariance. The estimates we obtain are very similar in form to the corresponding estimates of \cite[§4]{DG22}.

First we show that we do not change the solution of the equation much if we ``turn off'' the noise for a sufficiently short period of time, compared to the time at which we evaluate the solution. In the statement of the proposition below, we note that  $\clU_{\tau_2,t}\clG_{\tau_2-\tau_1}u_{\tau_1}$ represents the solution of the equation run until time $t$ but with the noise ``turned off'' on the time interval $[\tau_1,\tau_2]$, whereas $\clU_{\tau_1,t}u_{\tau_1}$ represents the solution of the equation run until time $t$ without turning off the noise.

\begin{prop}
    \label{prop:turnoffnoise}For every $T>0$ and $\lambda<\sqrt{2(d-2)}$, there
is a $C=C(T,\lambda)<\infty$ such that the following holds.
Suppose that $\Lip(\sigma)<\lambda$.
Let $\tau_1\in \RR$ and let $u_{\tau_1}\in\mathscr{X}_{\tau_1}$.
Then we have, for $t>\tau_2>\tau_1$, that %
\[
\vvvert\mathcal{U}_{\tau_2,t}\mathcal{G}_{\tau_2-\tau_1}u_{\tau_1}-\mathcal{U}_{\tau_1,t}u_{\tau_1}\vvvert^2\le C\langle\vvvert u_{\tau_1}\vvvert\rangle^2\frac{\log\left(\frac{t-\tau_1+\rho}{t-\tau_2+\rho}\right)+1}{\log\rho^{-1}}.
\]
\end{prop}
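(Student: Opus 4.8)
The plan is to derive a Gronwall-type integral inequality for
\[
  \Phi(t)\coloneqq\vvvert\clU_{\tau_2,t}\clG_{\tau_2-\tau_1}u_{\tau_1}-\clU_{\tau_1,t}u_{\tau_1}\vvvert^2
\]
and then to close it. Writing $v_t=\clU_{\tau_1,t}u_{\tau_1}$ and $w_t=\clU_{\tau_2,t}\clG_{\tau_2-\tau_1}u_{\tau_1}$ and using the mild formulation \cref{eq:umild} for $v$ (started at time $\tau_1$) and for $w$ (started at time $\tau_2$ from data $\clG_{\tau_2-\tau_1}u_{\tau_1}$), the semigroup identity $\clG_{t-\tau_2}\clG_{\tau_2-\tau_1}=\clG_{t-\tau_1}$ makes the deterministic parts cancel, so for $t>\tau_2$,
\[
  v_t(x)-w_t(x)=\frac{1}{\sqrt{\log\rho^{-1}}}\Bigl(\int_{\tau_1}^{\tau_2}\clG_{t-s}[\sigma(v_s)\,\dif W^\rho_s](x)+\int_{\tau_2}^t\clG_{t-s}[(\sigma(v_s)-\sigma(w_s))\,\dif W^\rho_s](x)\Bigr).
\]
Conditioning on $\clF_{\tau_2}$, the cross term in $\EE\lvert v_t(x)-w_t(x)\rvert^2$ vanishes, since the first integral is $\clF_{\tau_2}$-measurable while the second has vanishing conditional mean given $\clF_{\tau_2}$.

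Next I would estimate the two surviving second moments. For each, the Itô isometry together with the covariance structure \cref{eq:noise-covariance} produces a space-time integral against $R^\rho(y_1-y_2)G_{t-s}(x-y_1)G_{t-s}(x-y_2)$; bounding the matrix traces by products of Frobenius norms, using $\lvert\sigma(a)-\sigma(b)\rvert_\Frob\le\Lip(\sigma)\lvert a-b\rvert$ and $\lvert\sigma(a)\rvert_\Frob\le\lvert\sigma(0)\rvert_\Frob+\Lip(\sigma)\lvert a\rvert$, decoupling the two spatial variables via $pq\le\tfrac12(p^2+q^2)$, applying the moment bound \cref{prop:momentbd} (which applies since $\Lip(\sigma)<\lambda<\sqrt{2(d-2)}$, and applies to $w$ with the same right-hand side because $\clG$ is a contraction on $\scrX_{\tau_1}$) to bound $\vvvert\sigma(v_s)\vvvert$ by $C\langle\vvvert u_{\tau_1}\vvvert\rangle$, and evaluating the spatial double integral by \cref{eq:applyHKwithkernelat0}, I arrive — writing $\kappa\coloneqq\lambda^2/(2(d-2))<1$ and $g(s)\coloneqq\log\frac{s-\tau_1+\rho}{s-\tau_2+\rho}$, and using $\int_{\tau_1}^{\tau_2}\frac{\dif s}{t-s+\rho}=g(t)$ — at
\[
  \Phi(t)\le\frac{C\langle\vvvert u_{\tau_1}\vvvert\rangle^2}{\log\rho^{-1}}\,g(t)+\frac{\kappa}{\log\rho^{-1}}\int_{\tau_2}^t\frac{\Phi(s)}{t-s+\rho}\,\dif s,\qquad t>\tau_2.
\]

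To close this inequality I would first establish the elementary deterministic estimate
\[
  \int_{\tau_2}^t\frac{g(s)}{t-s+\rho}\,\dif s\le(\log\rho^{-1}+C_T)(g(t)+1),
\]
valid when $t-\tau_1$ is bounded by $T$: splitting $g(s)=g(t)+(g(s)-g(t))$, the first piece integrates to $g(t)\log\frac{t-\tau_2+\rho}{\rho}$, while $g(t-r)-g(t)=\log\bigl(1+\tfrac{r(\tau_2-\tau_1)}{(t-\tau_1+\rho)(t-\tau_2-r+\rho)}\bigr)\le\tfrac{r(\tau_2-\tau_1)}{(t-\tau_1+\rho)(t-\tau_2-r+\rho)}$, whose integral over $r\in[0,t-\tau_2]$ is computed by partial fractions (the coefficient of $g(t)$ being $\le\log\rho^{-1}+C_T$ is what matters). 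Feeding this back, together with the a priori bound $\Phi\le C\langle\vvvert u_{\tau_1}\vvvert\rangle^2$ from \cref{prop:momentbd} (which kills the remainder in the iteration), one checks that $c_1 g+c_2$ is a supersolution profile for a suitable pair of constants $c_1,c_2$ depending on $T$ and $\lambda$, provided $\rho$ is small enough that $\kappa(1+C_T/\log\rho^{-1})<1$; iterating then yields $\Phi(t)\le C\langle\vvvert u_{\tau_1}\vvvert\rangle^2\tfrac{g(t)+1}{\log\rho^{-1}}$, which is the claim. For $\rho$ bounded away from $0$ the claim is immediate, since then $\tfrac{g(t)+1}{\log\rho^{-1}}$ is bounded below while $\Phi$ is bounded by standard moment estimates for the equation at fixed $\rho$.

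\textbf{Main obstacle.} The hard part is the final Gronwall step. The kernel $(t-s+\rho)^{-1}$ is borderline non-integrable — over $[\tau_2,t]$ it contributes a factor of size $\log\rho^{-1}$, which exactly offsets the attenuation prefactor $(\log\rho^{-1})^{-1}$ — so the effective coefficient in the fixed-point iteration is $\kappa=\lambda^2/(2(d-2))$ rather than a quantity vanishing with $\rho$, and the argument closes only because this is $<1$. Moreover one must follow the forcing profile $g$, which itself grows like $\log\rho^{-1}$ as $s\downarrow\tau_2$, accurately through the iteration, which is precisely the role of the deterministic estimate above. Structurally this parallels the estimates of \cite[§4]{DG22}.
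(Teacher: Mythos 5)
Your proposal is correct and follows essentially the same route as the paper: write both solutions in mild form, use the semigroup identity to cancel the deterministic parts, use orthogonality of the two stochastic integrals over the disjoint time intervals, bound via Itô isometry together with \cref{eq:applyHKwithkernelat0} and \cref{prop:momentbd}, and arrive at the same closed integral inequality with the borderline kernel $(t-s+\rho)^{-1}$. The only difference is in the final step: where the paper simply invokes \cite[Lemma~4.3]{DG22} to close the inequality, you spell out the Gronwall iteration directly (the deterministic estimate $\int_{\tau_2}^t \frac{g(s)}{t-s+\rho}\,\dif s\le(\log\rho^{-1}+C_T)(g(t)+1)$, the a priori bound $\Phi\le CK$ to kill the remainder $L^n[\Phi]$, and the geometric convergence of $\sum L^j[F]$ under $\kappa'=\kappa(1+C_T/\log\rho^{-1})<1$), which is precisely the content of that cited lemma.
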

\begin{proof}
Let $u_t=\mathcal{U}_{\tau_1,t}u_{\tau_1}$ and $\tilde{u}_t=\mathcal{U}_{\tau_2,t}\mathcal{G}_{\tau_2-\tau_1}u_{\tau_1}$.
We can write
\[
\tilde{u}_t(x)=\mathcal{G}_{t-\tau_1}u_{\tau_1}(x)+\frac1{\sqrt{\log\rho^{-1}}}\int_{\tau_2}^t\clG_{t-s}[\sigma(\tilde{u}_{s})\,\dif W_{s}^{\rho}](x)
\]
while
\[
u_t(x)=\mathcal{G}_{t-\tau_1}u_{\tau_1}(x)+\frac1{\sqrt{\log\rho^{-1}}}\int_{\tau_1}^t\clG_{t-s}[\sigma(u_t)\,\dif W_{s}^{\rho}](x).
\]
Therefore, we have
\begin{align*}
u_t(x)-\tilde{u}_t(x) & =\frac1{\sqrt{\log\rho^{-1}}}\int_{\tau_1}^{\tau_2}\clG_{t-s}[\sigma(u_{s})\,\dif W_{s}^{\rho}](x)\\
 & \qquad+\frac1{\sqrt{\log\rho^{-1}}}\int_{\tau_2}^t\clG_{t-s}[(\sigma(u_{s})-\sigma(\tilde{u}_{s}))\,\dif W_{s}^{\rho}](x).
\end{align*}
We can estimate the second moment as
\begin{align*}
\mathbb{E} & \lvert u_t(x)-\tilde{u}_t(x)\rvert^2\\
 & \le\frac1{\log\rho^{-1}}\int_{\tau_1}^{\tau_2}\iint R^{\rho}(y_1-y_2)\mathbb{E}\left[\prod_{i=1}^2G_{t-s}(x-y_{i})\lvert\sigma(u_{s}(y_{i}))\rvert_{\Frob}\right]\,\dif y_1\,\dif y_2\,\dif s\\
 & \quad+\frac1{\log\rho^{-1}}\int_{\tau_2}^t\iint R^{\rho}(y_1-y_2)\mathbb{E}\left[\prod_{i=1}^2G_{t-s}(x-y_{i})\left\lvert\sigma(u_{s}(y_{i}))-\sigma(\tilde{u}_{s}(y_{i}))\right\rvert_{\Frob}\right]\\&\qquad\qquad\qquad\qquad\qquad\qquad\qquad\qquad\qquad\qquad\qquad\qquad\qquad\,\dif y_1\,\dif y_2\,\dif s\\
\overset{\cref{eq:applyHKwithkernelat0}}&{\le}\frac1{2(d-2)\log\rho^{-1}}\int_{\tau_1}^{\tau_2}\frac{\vvvert\sigma(u_{s})\vvvert^2}{t-s+\rho}\,\dif s+\frac{\Lip(\sigma)^2}{2(d-2)\log\rho^{-1}}\int_{\tau_2}^t\frac{\vvvert u_{s}-\tilde{u}_{s}\vvvert^2}{t-s+\rho}\,\dif s\\
                                        \overset{\cref{eq:momentbd}}&{\le}C\langle\vvvert u_{\tau_1}\vvvert\rangle^2\frac{\log\frac{t-\tau_1+\rho}{t-\tau_2+\rho}}{\log\rho^{-1}}+\frac{\Lip(\sigma)^2}{2(d-2)\log\rho^{-1}}\int_{\tau_2}^t\frac{\vvvert u_{s}-\tilde{u}_{s}\vvvert^2}{t-s+\rho}\,\dif s.
\end{align*}
Then we conclude by \cite[Lemma 4.3]{DG22} (with the appropriate change of constants).
\end{proof}

In the next proposition, we show that after the noise has been turned off for some period, the solution is quite flat (due to the smoothing influence of the heat kernel), so we do not lose much if we replace the solution by a constant (its value at a fixed point $X\in\RR^d$) at that time. This of course is only true if we evaluate the solution at a point $x$ that is close to $X$ compared to the amount of time elapsed. To state this concisely, for $X\in\RR^{d}$, we let $\mathcal{Z}_{X}$ be the ``freezing'' operator that, given a function $u$ defined on $\RR^d$, returns the constant function on $\RR^d$ with value $u(X)$:
\begin{equation}
(\mathcal{Z}_{X}u)(x)\equiv u(X).\label{eq:Zdef}
\end{equation}
In the following statement, we compare two solutions over $[0,t]$ with the noise turned off on $[\tau_1,\tau_2] \subset [0,t]$, but 
one of them additionally has the solution at time $\tau_2$ replaced by its value at some fixed location $X$ before the evolution 
of the SPDE continues. %

Recall the notation $\mathscr{X}_r$ from \cref{sec:notation}.
\begin{prop}
    \label{prop:flatten}For every $T>0$ and $\lambda<\sqrt{2(d-2)}$, there is a
$C=C(T,\lambda)<\infty$ such that the following holds.
Suppose that $\Lip(\sigma)<\lambda$.
Let $\tau_1<\tau_2$, $u_{\tau_1}\in\mathscr{X}_{\tau_1}$,
and $X\in\RR^{d}$. Then we have, for each $x\in\RR^{d}$, that 
\[
\mathbb{E}\lvert\mathcal{U}_{\tau_2,t}(\mathcal{Z}_{X}\mathcal{G}_{\tau_2-\tau_1}u_{\tau_1})(x)-\mathcal{U}_{\tau_2,t}(\mathcal{G}_{\tau_2-\tau_1}u_{\tau_1})(x)\rvert^2
\le C\langle\vvvert u_{\tau_1}\vvvert\rangle^2\frac{t-\tau_2+\lvert x-X\rvert^2}{\tau_2-\tau_1}.
\]
\end{prop}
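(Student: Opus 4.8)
The plan is to reduce to a difference of two SPDE solutions started at time $\tau_2$ — one from $\clG_{\tau_2-\tau_1}u_{\tau_1}$, the other from its frozen version $\mathcal{Z}_X\clG_{\tau_2-\tau_1}u_{\tau_1}$ — subtract their mild formulations \cref{eq:umild}, and then run a Gr\"onwall/iteration argument in the spirit of \cref{prop:momentbd,prop:turnoffnoise}. Write $\hat u_s=\clU_{\tau_2,s}(\clG_{\tau_2-\tau_1}u_{\tau_1})$ and $\tilde u_s=\clU_{\tau_2,s}(\mathcal{Z}_X\clG_{\tau_2-\tau_1}u_{\tau_1})$ for $s\in[\tau_2,t]$, set $\psi_s(y)=\EE\lvert\hat u_s(y)-\tilde u_s(y)\rvert^2$ and $D_s(y)=\frac{(s-\tau_2)+\lvert y-X\rvert^2}{\tau_2-\tau_1}$; the goal is $\psi_t(x)\le C\langle\vvvert u_{\tau_1}\vvvert\rangle^2 D_t(x)$. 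On subtracting the mild formulations, the deterministic parts combine (using $\clG_{s-\tau_2}\mathcal{Z}_X\clG_{\tau_2-\tau_1}u_{\tau_1}\equiv(\clG_{\tau_2-\tau_1}u_{\tau_1})(X)$ and the semigroup property) into $(\clG_{s-\tau_1}u_{\tau_1})(x)-(\clG_{\tau_2-\tau_1}u_{\tau_1})(X)$, while the stochastic parts are each $\frac1{\sqrt{\log\rho^{-1}}}\int_{\tau_2}^s\clG_{s-r}[(\sigma(\hat u_r)-\sigma(\tilde u_r))\,\dif W^\rho_r](x)$.

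First I would bound the deterministic term. By Minkowski's integral inequality, $\EE\lvert(\clG_{s-\tau_1}u_{\tau_1})(x)-(\clG_{\tau_2-\tau_1}u_{\tau_1})(X)\rvert^2\le\vvvert u_{\tau_1}\vvvert^2\,\lVert G_{s-\tau_1}(x-\cdot)-G_{\tau_2-\tau_1}(X-\cdot)\rVert_{L^1}^2$. Splitting the $L^1$ distance through $G_{\tau_2-\tau_1}(x-\cdot)$, it is at most a space-shift piece $\lesssim\lvert x-X\rvert/\sqrt{\tau_2-\tau_1}$ (from $\lVert\nabla G_a\rVert_{L^1}\lesssim a^{-1/2}$ and $a\ge\tau_2-\tau_1$) plus a time-shift piece $\lesssim(s-\tau_2)/(\tau_2-\tau_1)$ (from $\lVert\partial_a G_a\rVert_{L^1}=\tfrac12\lVert\Delta G_a\rVert_{L^1}\lesssim a^{-1}$ and $\log\tfrac{s-\tau_1}{\tau_2-\tau_1}\le\tfrac{s-\tau_2}{\tau_2-\tau_1}$), each trivially capped at $2$; squaring gives $\EE\lvert\mathrm{det}(x)\rvert^2\le C_0\vvvert u_{\tau_1}\vvvert^2 D_s(x)$.

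The main work is the stochastic term. By the It\^o isometry with the covariance \cref{eq:noise-covariance}, Frobenius submultiplicativity, and the Lipschitz bound on $\sigma$, its second moment is at most $\frac{\Lip(\sigma)^2}{\log\rho^{-1}}\int_{\tau_2}^s\iint G_{s-r}(x-y_1)G_{s-r}(x-y_2)R^\rho(y_1-y_2)\psi_r(y_1)^{1/2}\psi_r(y_2)^{1/2}\,\dif y_1\,\dif y_2\,\dif r$. Feeding in an a priori bound $\psi_r(y)\le K\langle\vvvert u_{\tau_1}\vvvert\rangle^2 D_r(y)$, using $\psi_r(y_1)^{1/2}\psi_r(y_2)^{1/2}\le\tfrac12(\psi_r(y_1)+\psi_r(y_2))$ and the evenness of $R^\rho$, everything reduces to evaluating $\iint G_{s-r}(x-y_1)G_{s-r}(x-y_2)R^\rho(y_1-y_2)\lvert y_1-X\rvert^2\,\dif y_1\,\dif y_2$. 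Writing $y_i=x+z_i$ and expanding $\lvert x-X+z_1\rvert^2=\lvert x-X\rvert^2+2(x-X)\cdot z_1+\lvert z_1\rvert^2$: the constant part, together with the $(r-\tau_2)$ contribution, produces \emph{exactly} $\frac{(r-\tau_2)+\lvert x-X\rvert^2}{2(d-2)(s-r+\rho)}$ by \cref{eq:applyHKwithkernelat0}; the linear part integrates to $0$ by the symmetry $(z_1,z_2)\mapsto(-z_1,-z_2)$; and the remainder $\iint G_{s-r}(z_1)G_{s-r}(z_2)R^\rho(z_1-z_2)\lvert z_1\rvert^2\,\dif z_1\,\dif z_2\lesssim\frac1{d-2}$ via $\lvert z\rvert^2 G_a(z)\lesssim a\,G_{2a}(z)$ and \cref{eq:HKwithkernelat0}. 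Integrating in $r$, with $\int_{\tau_2}^s\frac{\dif r}{s-r+\rho}=\log\frac{s-\tau_2+\rho}{\rho}\le(1+o(1))\log\rho^{-1}$ as $\rho\downarrow0$ uniformly for $s-\tau_2\le T$, one gets $\EE\lvert\mathrm{stoch}(x)\rvert^2\le(\tfrac{\Lip(\sigma)^2}{2(d-2)}+o(1))K\langle\vvvert u_{\tau_1}\vvvert\rangle^2 D_s(x)$. The hard part — and the reason the argument tolerates $\Lip(\sigma)<\sqrt{2(d-2)}$ rather than only $\sqrt{d-2}$ — is precisely that the $\lvert x-X\rvert^2$ term must retain the \emph{sharp} prefactor $\tfrac1{2(d-2)(s-r+\rho)}$; the crude bound $\lvert y_1-X\rvert^2\le 2\lvert y_1-x\rvert^2+2\lvert x-X\rvert^2$ would cost a factor of two and break the closure, so the exact expansion with its vanishing cross term is essential.

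To close, I would combine the two parts via $\EE\lvert\mathrm{det}+\mathrm{stoch}\rvert^2\le(1+\varepsilon)\EE\lvert\mathrm{stoch}\rvert^2+(1+\varepsilon^{-1})\EE\lvert\mathrm{det}\rvert^2$ and read the result as an integral inequality $\psi_s(x)\le(1+\varepsilon^{-1})C_0\langle\vvvert u_{\tau_1}\vvvert\rangle^2 D_s(x)+(1+\varepsilon)\clL\psi(s,x)$, where $\clL$ is the positivity-preserving linear operator above. Since $\Lip(\sigma)<\lambda<\sqrt{2(d-2)}$, one first fixes $\varepsilon$ (comparable to $\tfrac{\Lip(\sigma)^2/(2(d-2))}{1-\Lip(\sigma)^2/(2(d-2))}$, so that $(1+\varepsilon)\tfrac{\Lip(\sigma)^2}{2(d-2)}<1$ while $1+\varepsilon$ stays bounded) and then $\rho_0=\rho_0(\lambda,T)$, so that for $\rho<\rho_0$ the operator $(1+\varepsilon)\clL$ maps the cone $\{\phi:0\le\phi_s(y)\le c\langle\vvvert u_{\tau_1}\vvvert\rangle^2 D_s(y)\}$ into itself with a contraction factor $<1$. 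Iterating the integral inequality — and checking the remainder term vanishes using the crude a priori bound $\psi_s(y)\le C\langle\vvvert u_{\tau_1}\vvvert\rangle^2$, itself a consequence of \cref{prop:momentbd} applied to $\hat u$ and $\tilde u$ together with $\vvvert\clG_{\tau_2-\tau_1}u_{\tau_1}\vvvert,\vvvert\mathcal{Z}_X\clG_{\tau_2-\tau_1}u_{\tau_1}\vvvert\le\vvvert u_{\tau_1}\vvvert$ — yields $\psi_s(x)\le C\langle\vvvert u_{\tau_1}\vvvert\rangle^2 D_s(x)$ with $C=C(\lambda,T)$, and taking $s=t$ finishes the proof. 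A secondary technical point is the Young-inequality split, which must be carried out with $\varepsilon\neq1$ precisely so as not to discard the factor of two just gained.
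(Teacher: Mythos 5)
Your proposal is correct and follows essentially the same route as the paper: subtract the mild formulations, bound the deterministic part by the $L^1$ heat-kernel estimate $\lVert G_{t-\tau_1}(x-\cdot)-G_{\tau_2-\tau_1}(X-\cdot)\rVert_{L^1}^2\lesssim\frac{(t-\tau_2)+\lvert x-X\rvert^2}{\tau_2-\tau_1}$, and close a Duhamel-type integral inequality by a bootstrap with a quadratic-in-$\lvert x-X\rvert$ ansatz that preserves the sharp constant $\tfrac1{2(d-2)}$ from \cref{eq:applyHKwithkernelat0}, exactly as in the paper's \cref{lem:spatial-induction}. Your variants — AM--GM symmetrization and expansion about $x$ with the vanishing cross term (in place of the paper's center-of-mass change of variables and the product inequality \cref{eq:Bprodineq}), and seeding the iteration with the crude bound from \cref{prop:momentbd} rather than assuming $f$ bounded — are only cosmetic differences of the same induction.
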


\begin{proof}
Let $\tilde{u}_t=\mathcal{U}_{\tau_2,t}\mathcal{G}_{\tau_2-\tau_1}u_{\tau_1}$
and $\overline{u}_t=\mathcal{U}_{\tau_2,t}\mathcal{Z}_{X}\mathcal{G}_{\tau_2-\tau_1}u_{\tau_1}$.
We can write
\begin{align*}
\tilde{u}_t(x)-\overline{u}_t(x) & =\mathcal{G}_{t-\tau_1}u_{\tau_1}(x)-\mathcal{G}_{\tau_2-\tau_1}u_{\tau_1}(X)\\
 & \qquad+\frac1{\sqrt{\log\rho^{-1}}}\int_{\tau_2}^t\int\clG_{t-s}[(\sigma(\tilde{u}_{s})-\sigma(\overline{u}_{s}))\,\dif W_{s}^{\rho}](x).
\end{align*}
This means that, if we define $f(t,x)=\EE\lvert\tilde{u}_t(x)-\overline{u}_t(x)\rvert^2$,
then we have
\begin{align*}
f(t,x) & \le\mathbb{E}\left\lvert\mathcal{G}_{t-\tau_1}u_{\tau_1}(x)-\mathcal{G}_{\tau_2-\tau_1}u_{\tau_1}(X)\right\rvert^2\\
 & \qquad+\frac{\Lip(\sigma)^2}{\log\rho^{-1}}\int_{\tau_2}^t\iint R^{\rho}(y_1-y_2)\left(\prod_{i=1}^2G_{t-s}(x-y_{i})f(s,y_{i})^{1/2}\right)\,\dif y_1\,\dif y_2\,\dif s.
\end{align*}
We can estimate
\begin{align*}
\mathbb{E} & \left\lvert\mathcal{G}_{t-\tau_1}u_{\tau_1}(x)-\mathcal{G}_{\tau_2-\tau_1}u_{\tau_1}(X)\right\rvert^2=\mathbb{E}\left\lvert\int\left(G_{t-\tau_1}(x-y)-G_{\tau_2-\tau_1}(X-y)\right)u_{\tau_1}(y)\,\dif y\right\rvert^2\\
 & \le\vvvert u_{\tau_1}\vvvert\cdot\|G_{t-\tau_1}(x-\cdot)-G_{\tau_2-\tau_1}(X-\cdot)\|_{L^1(\mathbb{R})}^2\\
 & \le\vvvert u_{\tau_1}\vvvert\cdot\frac{2(t-\tau_2)+\lvert x-X\rvert^2}{\tau_2-\tau_1},
\end{align*}
with the last inequality by \cite[(5.6--5.7)]{DG22}. We then conclude
by \cref{lem:spatial-induction} below.
\end{proof}
\begin{lem}
\label{lem:spatial-induction}For every $\omega<1$, we have a constant
$C=C(\omega,d)<\infty$ such that, if $f$ is a positive bounded function and $A_1,A_2\in (0,\infty)$ are constants such
that
\begin{equation}
\begin{aligned}f(t,x) & \le A_1+A_2\lvert x-X\rvert^2\\
 & \qquad+\frac{\lambda^2}{\log\rho^{-1}}\int_{\tau_2}^t\iint R^{\rho}(y_1-y_2)\prod_{i=1}^2G_{t-s}(x-y_{i})f(s,y_{i})^{1/2}\,\dif y_1\,\dif y_2\,\dif s
\end{aligned}
\label{eq:fbound}
\end{equation}
and
\begin{equation}
\frac{\lambda^2\log\frac{t-\tau_2+\rho}{\rho}}{2(d-2)\log\rho^{-1}}\le\omega,\label{eq:flattenbd-geomcond}
\end{equation}
then, for all $t\ge\tau_2$, we have
\begin{equation}
f(t,x)\le C\left(A_1+A_2(t-\tau_2+\lvert x-X\rvert^2)\right).\label{eq:spatial-induction-conclusion}
\end{equation}
\end{lem}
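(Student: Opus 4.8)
The plan is to set up an iterative scheme that tracks how the bound on $f$ improves after each application of the integral inequality \eqref{eq:fbound}, exploiting the smallness condition \eqref{eq:flattenbd-geomcond} to extract a geometrically contracting error term. The overall strategy closely follows \cite[Lemma~5.2]{DG22}, adapted to the long-range kernel via the identity \eqref{eq:applyHKwithkernelat0}. First I would assume for the iteration an ansatz of the form $f(s,y) \le a_n + b_n\lvert y - X\rvert^2$ for $s \le t$, holding with $a_0 = A_1$ and $b_0 = A_2$ plus whatever crude a priori bound comes from the boundedness of $f$; feeding this into the right-hand side of \eqref{eq:fbound} will produce a new pair $(a_{n+1}, b_{n+1})$. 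The point is that the nonlinear term carries a factor $f(s,y_i)^{1/2}$, so after applying Jensen's (or Cauchy--Schwarz) inequality to split the square root across the two spatial variables, one is left with an expression controlled by $\iint R^\rho(y_1-y_2)\prod_i G_{t-s}(x-y_i)\,\dif y_1\,\dif y_2 = \frac{1}{2(d-2)(t-s+\rho)}$ from \eqref{eq:applyHKwithkernelat0}, against the linearized bound $\sqrt{a_n} + \sqrt{b_n}\,\lvert y_i - X\rvert$, after a further Cauchy--Schwarz to linearize the square root $f^{1/2}\le \sqrt{a_n} + \sqrt{b_n}\lvert y - X\rvert$.

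The key computation is then the time integral $\frac{\lambda^2}{2(d-2)\log\rho^{-1}}\int_{\tau_2}^t \frac{\dif s}{t-s+\rho} = \frac{\lambda^2}{2(d-2)\log\rho^{-1}}\log\frac{t-\tau_2+\rho}{\rho}$, which by hypothesis \eqref{eq:flattenbd-geomcond} is at most $\omega < 1$. I also need to handle the spatial growth: the factor $\lvert y_i - X\rvert^2$ gets convolved against $G_{t-s}(x - y_i)$, and a standard estimate $\int G_{t-s}(x-y)\lvert y - X\rvert^2\,\dif y \le C(\lvert x - X\rvert^2 + t - s)$ (together with the analogous bound with $\lvert y-X\rvert$ in place of $\lvert y-X\rvert^2$, using Cauchy--Schwarz) lets me bound the new spatial coefficient and constant in terms of the old ones plus a contribution proportional to $t - \tau_2 + \lvert x - X\rvert^2$. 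Collecting terms, the recursion takes the schematic form $a_{n+1} + b_{n+1}\lvert x - X\rvert^2 \le C_0\bigl(A_1 + A_2(t-\tau_2+\lvert x-X\rvert^2)\bigr) + \omega\bigl(a_n + b_n\lvert x - X\rvert^2\bigr)$ for an absolute constant $C_0 = C_0(d,\omega)$ (with a harmless dependence of the contraction on square roots handled by Young's inequality, absorbing the $\sqrt{a_n},\sqrt{b_n}$ cross terms).

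Iterating this $n$ times and using $\omega < 1$, the error term $\omega^n(a_0 + b_0\lvert x-X\rvert^2)$ — which at stage $0$ contains the crude a priori bound on $\sup f$ — tends to $0$ as $n\to\infty$, while the geometric series $\sum_{k\ge 0}\omega^k C_0 = \frac{C_0}{1-\omega}$ converges, yielding \eqref{eq:spatial-induction-conclusion} with $C = \frac{C_0}{1-\omega}$. I expect the main obstacle to be bookkeeping the spatial moments cleanly: one must propagate a bound that is \emph{affine} in $\lvert x - X\rvert^2$ (not just a constant), so at each step the Gaussian convolution of $\lvert y - X\rvert^2$ and of $\lvert y - X\rvert$ (the latter appearing from linearizing $f^{1/2}$) must be controlled in a way that closes the recursion without the spatial coefficient $b_n$ blowing up — this is exactly where the contraction factor $\omega$ must multiply the spatial part as well as the constant part, so I need to be careful that the smallness in \eqref{eq:flattenbd-geomcond} applies uniformly. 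The finiteness and boundedness of $f$ is used only to start the iteration (so that $a_0, b_0 < \infty$); it plays no role in the final constant.
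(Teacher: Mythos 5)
Your overall strategy---propagating an ansatz of the form $a_n+b_n\lvert y-X\rvert^2$, plugging it into \eqref{eq:fbound}, using the exact evaluation \eqref{eq:applyHKwithkernelat0} together with \eqref{eq:flattenbd-geomcond} to get a contraction, and iterating away the a priori bound on $f$---is exactly the paper's. But the key step as you describe it has a genuine quantitative gap: you propose to linearize $f^{1/2}\le\sqrt{a_n}+\sqrt{b_n}\lvert y-X\rvert$ and then absorb the $\sqrt{a_nb_n}$ cross terms by Young's inequality, calling this ``harmless.'' It is not. Any bound of the form
\begin{equation*}
\bigl(\sqrt{a}+\sqrt{b}\lvert y_1-X\rvert\bigr)\bigl(\sqrt{a}+\sqrt{b}\lvert y_2-X\rvert\bigr)\ \le\ \mu\,a+\nu\,b\cdot\tfrac12\bigl(\lvert y_1-X\rvert^2+\lvert y_2-X\rvert^2\bigr)
\end{equation*}
forces $\mu+\nu\ge 4$ (take $y_1=y_2$ and $a=b\lvert y_1-X\rvert^2$), so at least one of the two recursion coefficients picks up a factor $\ge 2$. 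Consequently your schematic recursion with contraction factor exactly $\omega$ does not follow from your steps; what you actually get closes only when (roughly) $2\omega<1$. Since the lemma is claimed for \emph{every} $\omega<1$, and in its application in \cref{prop:flatten} $\omega$ is genuinely close to $1$ when $\Lip(\sigma)$ approaches $\sqrt{2(d-2)}$, this loss is not cosmetic. The paper avoids it by never linearizing: in midpoint/difference coordinates $w=\tfrac12(y_1+y_2)$, $z=y_1-y_2$ it uses the exact algebraic inequality $\prod_{i}(B_1+B_2\lvert y_i-X\rvert^2)^{1/2}\le B_1+B_2(\lvert w-X\rvert^2+\tfrac14\lvert z\rvert^2)$, which has no constant loss. (An equally sharp alternative within your framework: split by AM--GM, $f(s,y_1)^{1/2}f(s,y_2)^{1/2}\le\tfrac12(f(s,y_1)+f(s,y_2))$, and use the symmetry of the kernel in $y_1\leftrightarrow y_2$; this reproduces the same bound with $\mu=\nu=1$.)

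A secondary point you elide: your ``standard estimate'' $\int G_{t-s}(x-y)\lvert y-X\rvert^2\,\dif y\le C(\lvert x-X\rvert^2+t-s)$ cannot be applied directly, because the weight $\lvert y_i-X\rvert^2$ sits inside the double integral coupled by $R^\rho(y_1-y_2)$; one must first decouple via the change of variables above, and this produces an extra term $\int R^\rho(z)\lvert z\rvert^2G_{2(t-s)}(z)\,\dif z$, which is \emph{not} of the form $(t-s+\rho)^{-1}$ times a constant. It is handled by $R^\rho(z)\lvert z\rvert^2\le C$ from \eqref{eq:Rrhobound} and contributes a term $C\lambda^2(t-\tau_2)(\log\rho^{-1})^{-1}b_n$ to the constant part of the recursion. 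This term is harmless (the recursion is triangular with diagonal $\omega<1$), but it must appear in your bookkeeping, and its treatment requires \eqref{eq:Rrhobound}, which your proposal never invokes. With the linearization replaced by one of the sharp bounds above and this term accounted for, your iteration does yield \eqref{eq:spatial-induction-conclusion} exactly as in the paper.
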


\begin{proof}
Since $f$ is assumed to be bounded, we know that, for some values of $B_1$ and $B_2$,
\begin{equation}
f(t,x)\le B_1+B_2\lvert x-X\rvert^2.\label{eq:fansatz}
\end{equation}
In view of \cref{eq:fbound}, the inequality
\cref{eq:fansatz} implies that
\begin{equation}
\begin{aligned}f(t,x) & \le A_1+A_2\lvert x-X\rvert^2\\
 & +\frac{\lambda^2}{\log\rho^{-1}}\int_{\tau_2}^t\iint R^{\rho}(y_1-y_2)\prod_{i=1}^2G_{t-s}(x-y_{i})\left(B_1+B_2\lvert y_{i}-X\rvert^2\right)^{1/2}\,\dif y_1\,\dif y_2\,\dif s.
\end{aligned}
\label{eq:fuseansatz}
\end{equation}
We make the change of variables
\begin{equation}
w=\frac12(y_1+y_2),\qquad z=y_1-y_2.\label{eq:chgvar}
\end{equation}
We note that
\begin{align*}
\prod_{i=1}^2\left(B_1+B_2\lvert y_{i}-X\rvert^2\right) & =\left(B_1+B_2\lvert w+z/2-X\rvert^2\right)\left(B_1+B_2\lvert w-z/2-X\rvert^2\right)\\
 & =B_1^2+B_1B_2\left(\lvert w-X\rvert^2+\frac1{4}\lvert z\rvert^2\right)\\
 & \qquad+B_2^2\left(\left(\lvert w-X\rvert^2+\frac1{4}\lvert z\rvert^2\right)^2-\frac12\left((w-X)\cdot z\right)^2\right)\\
 & \le\left(B_1+B_2\left(\lvert w-X\rvert^2+\frac1{4}\lvert z\rvert^2\right)\right)^2,
\end{align*}
so
\begin{equation}
\prod_{i=1}^2\left(B_1+B_2\lvert y_{i}-X\rvert^2\right)^{1/2}\le B_1+B_2\left(\lvert w-X\rvert^2+\frac1{4}\lvert z\rvert^2\right).\label{eq:Bprodineq}
\end{equation}
Using the change of variables \cref{eq:chgvar} and the estimate
\cref{eq:Bprodineq} in \cref{eq:fuseansatz}, we obtain
\begin{align}
f(t,x) & \le A_1+A_2\lvert x-X\rvert^2\nonumber \\
       & \qquad+\frac{\lambda^2}{\log\rho^{-1}}\int_{\tau_2}^t\iint R^{\rho}(z)\left(B_1+B_2\left(\lvert w-X\rvert^2+\frac1{4}\lvert z\rvert^2\right)\right)\nonumber \\
 & \qquad\qquad\qquad\qquad\qquad\times G_{t-s}(x-w-z/2)G_{t-s}(x-w+z/2)\,\dif w\,\dif z\,\dif s\nonumber \\
 & =A_1+A_2\lvert x-X\rvert^2\nonumber \\
 & \qquad+\frac{\lambda^2}{\log\rho^{-1}}\int_{\tau_2}^t\iint R^{\rho}(z)\left(B_1+B_2\left(\lvert w-X\rvert^2+\frac1{4}\lvert z\rvert^2\right)\right)\nonumber \\
 & \qquad\qquad\qquad\qquad\qquad\times G_{2(t-s)}(z)G_{\frac{t-s}2}(x-w)\,\dif w\,\dif z\,\dif s.\label{eq:chgvarinftx}
\end{align}
Now we have
\begin{align}
\frac{\lambda^2}{\log\rho^{-1}} & \int_{\tau_2}^t\iint\left(B_1+B_2\lvert w-X\rvert^2\right)R^{\rho}(z)G_{2(t-s)}(z)G_{\frac{t-s}2}(x-w)\,\dif w\,\dif z\,\dif s\nonumber \\
\overset{\cref{eq:HKwithkernelat0}}&{=}\frac{\lambda^2}{2(d-2)\log\rho^{-1}}\int_{\tau_2}^t\frac{B_1+B_2\int G_{\frac{t-s}2}(x-w)\lvert w-X\rvert^2\,\dif w}{t-s+\rho}\,\dif s\nonumber \\
 & =\frac{\lambda^2}{2(d-2)\log\rho^{-1}}\int_{\tau_2}^t\frac{B_1+B_2\left(d(t-s)/2+\lvert x-X\rvert^2\right)}{t-s+\rho}\,\dif s\nonumber \\
\overset{\cref{eq:flattenbd-geomcond}}&{\le}\omega\left(B_1+B_2\left(d(t-\tau_2)/2+\lvert x-X\rvert^2\right)\right).\label{eq:easyterms}
\end{align}
We also have that
\begin{align}
\frac{\lambda^2}{4\log\rho^{-1}} & \int_{\tau_2}^t\iint R^{\rho}(z)\lvert z\rvert^2G_{2(t-s)}(z)G_{\frac{t-s}2}(x-w)\,\dif w\,\dif z\,\dif s\le\frac{C\lambda^2(t-\tau_2)}{\log\rho^{-1}}\label{eq:hardterms}
\end{align}
by \cref{eq:Rrhobound}. Using \cref{eq:easyterms} and
\cref{eq:hardterms} in \cref{eq:chgvarinftx}, we get
\begin{align*}
f(t,x) & \le A_1+\left(B_1+B_2d(t-\tau_2)/2\right)\omega+CB_2\frac{\lambda^2(t-\tau_2)}{\log\rho^{-1}}+\left(A_2+\omega B_2\right)\lvert x-X\rvert^2.
\end{align*}

Therefore, if we take $B_2^{(0)}=0$ and $B_1^{(0)}$ so large
that \cref{eq:fansatz} holds with $B_{i}\mapsfrom B_{i}^{(0)}$,
and then inductively define
\begin{align}
B_1^{(n)} & =A_1+\omega\left(B_1^{(n-1)}+B_2^{(n-1)}d(t-\tau_2)/2\right)+\frac{C\lambda^2(t-\tau_2)}{\log\rho^{-1}}B_2^{(n-1)};\label{eq:B1n}\\
B_2^{(n)} & =A_2+\omega B_2^{(n-1)},\label{eq:B2n}
\end{align}
then we have
\[
f(t,x)\le B_1^{(n)}+B_2^{(n)}\lvert x-X\rvert^2\qquad\text{for each }n\in\mathbb{N}.
\]

Now from \cref{eq:B2n} and the assumption that $\omega<1$ we
see that
\[
\limsup_{n\to\infty}B_2^{(n)}\le\frac{A_2}{1-\omega},
\]
and using this in \cref{eq:B1n}, we see that
\[
\limsup_{n\to\infty}B_1^{(n)}\le\frac{A_1+\frac{A_2}{1-\omega}\left(d/2+C\lambda^2/\log\rho^{-1}\right)(t-\tau_2)}{1-\omega}.
\]
The last three displays together imply \cref{eq:spatial-induction-conclusion}.
\end{proof}

We summarize the results of this section in the following combined proposition.

\begin{prop}
\label{prop:turnoffflatten}For every $T>0$ and $\lambda<\sqrt{2(d-2)}$,
there is a $C=C(T,\lambda)<\infty$ such that the following holds.
Suppose that $\Lip(\sigma)<\lambda$. Let $\tau_1<\tau_2\in\RR$
be such that $\tau_2\le\tau_1+T$, and suppose that $\rho\in(0,C^{-1})$
and $u_{\tau_1}\in\mathscr{X}_{\tau_1}$. Then we have
\begin{equation}
\begin{aligned} & \left(\mathbb{E}\left\lvert\mathcal{U}_{\tau_2,t}\mathcal{Z}_{X}\mathcal{G}_{\tau_2-\tau_1}u_{\tau_1}(x)-\mathcal{U}_{\tau_1,t}u_{\tau_1}(x)\right\rvert^2\right)^{1/2}\\
 & \qquad\le C\langle\vvvert u_{\tau_1}\vvvert\rangle\left[\left(\frac{\log\left(\frac{t-\tau_1+\rho}{t-\tau_2+\rho}\right)+1}{\log\rho^{-1}}\right)^{1/2}+\left(\frac{t-\tau_2+\lvert x-X\rvert^2}{\tau_2-\tau_1}\right)^{1/2}\right].
\end{aligned}
\label{eq:turnoffflatten}
\end{equation}
\end{prop}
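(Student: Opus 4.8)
The plan is to derive \cref{eq:turnoffflatten} by combining \cref{prop:turnoffnoise} and \cref{prop:flatten} through the triangle inequality, inserting the intermediate field $\tilde u_t\coloneqq\clU_{\tau_2,t}\clG_{\tau_2-\tau_1}u_{\tau_1}$ --- the solution run to time $t$ with the noise turned off on $[\tau_1,\tau_2]$ but \emph{without} the freezing step. Writing $u_t\coloneqq\clU_{\tau_1,t}u_{\tau_1}$ and $\overline u_t\coloneqq\clU_{\tau_2,t}\clZ_X\clG_{\tau_2-\tau_1}u_{\tau_1}$, I would fix $x\in\RR^d$ and use the triangle inequality in $L^2(\Omega)$ to bound $\bigl(\EE\lvert\overline u_t(x)-u_t(x)\rvert^2\bigr)^{1/2}$ by $\bigl(\EE\lvert\overline u_t(x)-\tilde u_t(x)\rvert^2\bigr)^{1/2}+\bigl(\EE\lvert\tilde u_t(x)-u_t(x)\rvert^2\bigr)^{1/2}$. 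The first summand is exactly what \cref{prop:flatten} controls --- that proposition is stated precisely for the pair $\clU_{\tau_2,t}(\clZ_X\clG_{\tau_2-\tau_1}u_{\tau_1})$ versus $\clU_{\tau_2,t}(\clG_{\tau_2-\tau_1}u_{\tau_1})$ --- and the second is at most $\vvvert\tilde u_t-u_t\vvvert$, which \cref{prop:turnoffnoise} bounds; note that the conclusion of \cref{prop:turnoffnoise}, being in the $\vvvert\cdot\vvvert$-norm, holds for every fixed $x$.

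Next I would take square roots in the two estimates. \cref{prop:turnoffnoise} yields $C\langle\vvvert u_{\tau_1}\vvvert\rangle\bigl(\tfrac{\log((t-\tau_1+\rho)/(t-\tau_2+\rho))+1}{\log\rho^{-1}}\bigr)^{1/2}$ and \cref{prop:flatten} yields $C\langle\vvvert u_{\tau_1}\vvvert\rangle\bigl(\tfrac{t-\tau_2+\lvert x-X\rvert^2}{\tau_2-\tau_1}\bigr)^{1/2}$, so their sum is precisely the right side of \cref{eq:turnoffflatten} once one takes for $C$ the maximum of the square roots of the two constants, both of which depend only on $T$ and $\lambda$. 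The hypotheses $\tau_2\le\tau_1+T$ and $\rho\in(0,C^{-1})$ are exactly what is needed to invoke those propositions with such a constant: the smallness of $\rho$ is inherited, through the proof of \cref{prop:turnoffnoise}, from the smallness required in the moment bound \cref{prop:momentbd}, while $\tau_2-\tau_1\le T$ keeps the relevant time window of controlled length.

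I do not expect a genuine obstacle: this proposition is a mechanical repackaging, via the triangle inequality, of the two main estimates of the section, and all the analytic work has already been carried out in \cref{prop:turnoffnoise}, \cref{prop:flatten}, and \cref{lem:spatial-induction}. The one point I would be careful about is that the freezing operator $\clZ_X$ is applied to the off-noise field $\clG_{\tau_2-\tau_1}u_{\tau_1}$ at time $\tau_2$, which is exactly the configuration handled by \cref{prop:flatten}; since $\clG_{\tau_2-\tau_1}u_{\tau_1}$ is $\clF_{\tau_1}$-measurable, hence $\clF_{\tau_2}$-measurable (the heat semigroup does not involve $W$), and $\vvvert\clG_{\tau_2-\tau_1}u_{\tau_1}\vvvert\le\vvvert u_{\tau_1}\vvvert$ by Jensen's inequality, and likewise $\clZ_X\clG_{\tau_2-\tau_1}u_{\tau_1}$ lies in $\mathscr{X}_{\tau_2}$ with no larger norm, nothing is lost in the reduction to the earlier propositions.
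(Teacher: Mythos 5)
Your argument is exactly the paper's proof: insert the intermediate field $\clU_{\tau_2,t}\clG_{\tau_2-\tau_1}u_{\tau_1}$, apply the triangle inequality in $L^2$, and invoke \cref{prop:flatten} and \cref{prop:turnoffnoise} for the two pieces, combining the constants. The additional checks you record (measurability of the frozen field and $\vvvert\clG_{\tau_2-\tau_1}u_{\tau_1}\vvvert\le\vvvert u_{\tau_1}\vvvert$) are correct and harmless.
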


\begin{proof}
This follows from combining \cref{prop:turnoffnoise,prop:flatten} using the triangle inequality.
\end{proof}

\section{Decorrelation estimate\label{sec:cov-bd}}

Since, unlike \cite{DG22}, we consider noise with long range correlations, the statistics of the solution involve covariances 
of $\sigma(u)$ at distant locations, rather than at locations that are very close to one another. The following decorrelation 
result will be essential for bounding such covariances.
\begin{prop}
\label{prop:covbd}For each $T>0$, there is a $C=C(T,\sigma,d)\in(1,\infty)$
such that the following holds. Let $f\in\Lip(\RR^{\mathfrak{m}})$ and let $u_t$ solve (\ref{eq:ueqn}) with initial data $u_{0}\in\mathscr{X}_{0}$.
Then we have, for all $\rho\in(0,C^{-1})$, that
\begin{equation}
\Cov(f(u_t(x_1)),f(u_t(x_2)))\le\frac{C\Lip(f)^2\langle \vvvert u_0\vvvert\rangle^2}{\log\rho^{-1}}\left(\log\frac{t^{1/2}}{\lvert x_1-x_2\rvert}+1+\frac{\lvert x_1-x_2\rvert}{t^{1/2}}\right).\label{eq:covbd}
\end{equation}
\end{prop}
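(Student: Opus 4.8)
The plan is to use Malliavin calculus, in the manner of \cite[Lemma~2.13]{GHL23}. First I would realize the noise as $W^\rho=\phi^\rho*\xi$ (spatial convolution) with $\xi$ an $\RR^\fkn$-valued space-time white noise and $\phi^\rho$ the function with $\widehat{\phi^\rho}=(\widehat{R^\rho}\,)^{1/2}$; then $\phi^\rho\ge 0$ and is radially decreasing (it is a Gaussian mollification of a Riesz kernel), and $\phi^\rho*\phi^\rho=R^\rho$. In this representation \eqref{eq:ueqn} is a mild equation driven by $\xi$, and standard arguments (cf.\ \cite{DG22,DG23a} and \cite[§2]{GHL23}) give that $u_r(x)$ is Malliavin differentiable, the derivative $D_{s,z}u_r(x)$ vanishing for $r<s$ and solving, for $r>s$,
\begin{equ}\label{eq:Dmild}
D_{s,z}u_r(x)=\frac1{\sqrt{\log\rho^{-1}}}\int G_{r-s}(x-y)\sigma(u_s(y))\phi^\rho(y-z)\,\dif y
+\frac1{\sqrt{\log\rho^{-1}}}\int_s^r\!\!\int\Bigl[\int G_{r-\tau}(x-y)\sigma'(u_\tau(y))D_{s,z}u_\tau(y)\,\phi^\rho(y-z')\,\dif y\Bigr]\xi(\dif\tau,\dif z').
\end{equ}
The Clark--Ocone formula together with the Itô isometry for $\xi$ then yields
\begin{equ}\label{eq:ClarkOcone}
\Cov\bigl(f(u_t(x_1)),f(u_t(x_2))\bigr)=\EE\int_0^t\!\!\int_{\RR^d}D_{s,z}f(u_t(x_1))\cdot\EE\bigl[D_{s,z}f(u_t(x_2))\mid\clF_s\bigr]\,\dif z\,\dif s,
\end{equ}
and, using $\lvert D_{s,z}f(u_t(x_i))\rvert\le\Lip(f)\lvert D_{s,z}u_t(x_i)\rvert_\Frob$, conditional Jensen, and Cauchy--Schwarz, I would bound the covariance by
\begin{equ}\label{eq:redux}
\Lip(f)^2\int_0^t\!\!\int_{\RR^d}\bigl(\EE\lvert D_{s,z}u_t(x_1)\rvert_\Frob^2\bigr)^{1/2}\bigl(\EE\lvert D_{s,z}u_t(x_2)\rvert_\Frob^2\bigr)^{1/2}\,\dif z\,\dif s.
\end{equ}

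The heart of the matter is the Malliavin estimate: for suitable $C=C(T,\sigma,d)$ and $c=c(\sigma,d)\ge1$,
\begin{equ}\label{eq:Dprofile}
\EE\lvert D_{s,z}u_r(x)\rvert_\Frob^2\le\frac{C\langle\vvvert u_0\vvvert\rangle^2}{\log\rho^{-1}}\bigl((G_{c(r-s)}*\phi^\rho)(x-z)\bigr)^2,\qquad s\le r\le t.
\end{equ}
Granting \eqref{eq:Dprofile}, the inner integral in \eqref{eq:redux} is, for each $s$, at most $\tfrac{C\langle\vvvert u_0\vvvert\rangle^2}{\log\rho^{-1}}$ times
\begin{equ}
\int_{\RR^d}(G_{c(t-s)}*\phi^\rho)(x_1-z)\,(G_{c(t-s)}*\phi^\rho)(x_2-z)\,\dif z=\bigl(G_{2c(t-s)}*\phi^\rho*\phi^\rho\bigr)(x_1-x_2)=\bigl(G_{2c(t-s)}*R^\rho\bigr)(x_1-x_2),
\end{equ}
using that $\phi^\rho$ is even. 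Integrating in $s$ and substituting $r=t-s$ gives $\Cov(\cdots)\le\tfrac{C\Lip(f)^2\langle\vvvert u_0\vvvert\rangle^2}{\log\rho^{-1}}\int_0^t(G_{2cr}*R^\rho)(x_1-x_2)\,\dif r$, and \cref{prop:integrateHKRrho}, applied with $t$ replaced by $ct$, turns this into \eqref{eq:covbd} (the resulting $\tfrac12\log c$ is absorbed into the ``$+1$'').

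To prove \eqref{eq:Dprofile} I would iterate \eqref{eq:Dmild}. The first term contributes at most $\tfrac{C\langle\vvvert u_0\vvvert\rangle^2}{\log\rho^{-1}}\bigl((G_{r-s}*\phi^\rho)(x-z)\bigr)^2$ by Cauchy--Schwarz, the Lipschitz bound $\lvert\sigma(u)\rvert_\Frob^2\lesssim 1+\lvert u\rvert^2$, and the moment bound \cref{prop:momentbd}. For the stochastic-integral term, the Itô isometry, $\lvert\sigma'\rvert_\Frob\le\Lip(\sigma)$, and $\int\phi^\rho(y_1-z')\phi^\rho(y_2-z')\,\dif z'=R^\rho(y_1-y_2)$ reduce matters to controlling, with $g_\tau(y):=(\EE\lvert D_{s,z}u_\tau(y)\rvert_\Frob^2)^{1/2}$,
\begin{equ}
\frac{\Lip(\sigma)^2}{\log\rho^{-1}}\int_s^r\!\iint G_{r-\tau}(x-y_1)G_{r-\tau}(x-y_2)R^\rho(y_1-y_2)\,g_\tau(y_1)g_\tau(y_2)\,\dif y_1\,\dif y_2\,\dif\tau;
\end{equ}
substituting the inductive bound for $g_\tau$ and performing the $y_1,y_2$-integrals via the semigroup property of $G$ and the bound $(G_\nu*R^\rho)(\cdot)\le(G_\nu*R^\rho)(0)=\tfrac1{(d-2)(\nu+2\rho)}$ from \eqref{eq:HKwithkernelat0} leaves a $\tau$-integral with a weight $(r-\tau+\rho)^{-1}$ whose integral over $[s,r]$ is $\lesssim\log\rho^{-1}$. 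So the prefactor $\Lip(\sigma)^2/\log\rho^{-1}$ is exactly compensated, the iteration closes as a geometric series in the admissible range of $\Lip(\sigma)$, and the constant $C$ picks up its $\sigma$-dependence (blowing up near the threshold), just as in the proof of \cref{prop:momentbd}.

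The hard part will be this last step --- verifying that the Duhamel correction stays below the profile $\bigl((G_{c(r-s)}*\phi^\rho)(x-z)\bigr)^2$. The correction has the right mass $\int(\cdot)\,\dif z$ and the right time budget, but it is spatially somewhat more spread out than the square of a single heat kernel, so closing the induction requires careful tracking of the kernel shape --- either by allowing $c$ in \eqref{eq:Dprofile} to grow along the iteration, or by carrying an auxiliary additive term in the ansatz whose contribution is again absorbed after the $s$-integration by \cref{prop:integrateHKRrho} --- together with a separate treatment of the short-time regime $r-s\lesssim\rho$. This is precisely the part played, in the present setting, by the estimate \cite[Lemma~2.13]{GHL23} on which this argument is modeled.
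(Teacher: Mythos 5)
Your architecture is the same as the paper's: Clark--Ocone, reduction of the covariance to a product of (conditional) Malliavin derivatives, a pointwise second-moment bound on $\Dif_{s,z}u_t(x)$ with a squared heat-kernel profile and a $1/\log\rho^{-1}$ prefactor, then a $z$-integration producing $(G_{2(t-s)}*R^\rho)(x_1-x_2)$ and \cref{prop:integrateHKRrho} to finish. The genuine gap is that the central ingredient --- your Malliavin profile bound, the analogue of \cref{prop:MDbd} --- is asserted rather than proved: you explicitly leave open whether the Duhamel correction in the Picard iteration stays below the assumed profile, and you only gesture at possible repairs (letting $c$ grow along the iteration, an auxiliary additive term, a separate short-time regime). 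Since the whole proposition rests on that estimate, the proposal is incomplete at exactly the step where the work lies.

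The good news is that the induction does close, with $c=1$ and no auxiliary term, by the computation the paper uses in \cref{prop:MDinduction}. Write $(G_{\tau-s}*\phi^\rho)(y-z)=\int\phi^\rho(w)G_{\tau-s}(y-z-w)\,\dif w$ and use the exact Gaussian factorization $G_{r-\tau}(x-y)\,G_{\tau-s}(y-z-w)=G_{r-s}(x-z-w)\,G_{a}(y-m_w)$ with $a=\frac{(r-\tau)(\tau-s)}{r-s}$ and $m_w$ the usual interpolation point; then the $y_1,y_2$-integral against $R^\rho(y_1-y_2)$ equals $(G_{2a}*R^\rho)(m_{w_1}-m_{w_2})\le(G_{2a}*R^\rho)(0)=\frac{1}{(d-2)(2a+2\rho)}$ (by \cref{eq:HKwithkernelat0}, together with the fact that $G_{2a}*R^\rho=G_{2a+2\rho}*\lvert\cdot\rvert^{-2}$ is radially nonincreasing, needed here because the centers $m_{w_1},m_{w_2}$ differ). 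Pulling the remaining $w_1,w_2$-integrals back together gives the bound $\frac{\Lip(\sigma)^2H}{(d-2)(\log\rho^{-1})^2}\bigl((G_{r-s}*\phi^\rho)(x-z)\bigr)^2\int_s^r\frac{\dif\tau}{2a+2\rho}$, and the last integral is at most $\log\frac{3(r-s)+2\rho}{2\rho}$, i.e.\ of order $\log\rho^{-1}$. So the profile reproduces itself exactly and the constants obey $H_{n+1}=\lambda_1+\omega H_n$ with $\omega<1$ once $\Lip(\sigma)^2<d-2$ and $\rho$ is small --- precisely the paper's recursion; no growing $c$, no extra term, no separate short-time analysis. (The paper sidesteps your $\phi^\rho$ entirely by differentiating with respect to the colored noise $W^\rho$ itself, so the profile is just $G_{t-r}(x-z)$ and the weight $R^\rho(z_1-z_2)$ reappears in the Clark--Ocone bound; the two formulations are equivalent here.) Two smaller points: $\sigma'$ in your derivative equation needs the approximation argument for merely Lipschitz $\sigma$ (as in the paper, via \cite[Proposition~1.2.4]{Nua06}), and the estimate --- like the paper's --- implicitly requires $\Lip(\sigma)^2<d-2$ for the iteration to converge, consistent with the standing assumption of \cref{thm:mainthm} even though \cref{prop:covbd} does not display it.
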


\begin{proof}
We assume that $f\colon\mathbb{R}^{\mathfrak{m}}\to\mathbb{R}$ is
smooth; the case of general Lipschitz $f$ follows by approximation. Using the Clark--Ocone
formula, we can write
\begin{equation}
\begin{aligned}f(u_t(x))-\EE[f(u_t(x))] & =\int_{0}^t\int\EE[\Dif_{r,z}[f(u_t(x))]\mid\clF_{s}]\,\dif W_{s}^{\rho}(z)\\
 & =\int_{0}^t\int\EE[\nabla f(u_t(x))\cdot\Dif_{r,z}u_t(x)\mid\clF_{s}]\,\dif W_{s}^{\rho}(z).
\end{aligned}
\label{eq:applyCO}
\end{equation}
We then estimate the covariance by
\begin{align*}
\Cov & (f(u_t(x_1)),f(u_t(x_2)))\\
 & \le\int_{0}^t\iint R^{\rho}(z_1-z_2)\EE\left[\prod_{i=1}^2\left\lvert\mathbb{E}[\nabla f(u_t(x_{i}))\cdot\Dif_{r,z_{i}}u_t(x_{i})\mid\clF_{r}]\right\rvert\right]\,\dif z_1\,\dif z_2\,\dif r\\
 & \le\Lip(f)^2\int_{0}^t\iint R^{\rho}(z_1-z_2)\prod_{i=1}^2\left(\EE\lvert\Dif_{r,z_{i}}u_t(x)\rvert_{\Frob}^2\right)^{1/2}\,\dif z_1\,\dif z_2\,\dif r.
\end{align*}
Applying \cref{eq:MDbound} below, we obtain
\begin{align*}
    \Cov&(f(u_t(x_1)),f(u_t(x_2))) \\& \le\frac{C\Lip(f)^2\langle\vvvert u_0\vvvert\rangle^2}{\log\rho^{-1}}\int_{0}^t\iint R^{\rho}(z_1-z_2)\prod_{i=1}^2G_{t-r}(x_{i}-z_{i})\,\dif z_1\,\dif z_2\,\dif r,
\end{align*}
so that \cref{eq:covbd} then follows from \cref{eq:integrateHKRrho}.
\end{proof}
In the proof of \cref{prop:covbd}, we needed a bound on the
Malliavin derivative norm $\EE\lvert\Dif_{r,z_{i}}u_t(x)\rvert_{\Frob}^2$:
\begin{prop}
\label{prop:MDbd}For each $T\in(0,\infty)$, $M,\beta\in (0,\infty)$, and $\omega<1$, there is a constant $C=C(T,\omega,M,\beta)$ such that the following holds.
Suppose that $\lvert \sigma(u)\rvert_{\Frob}^2\le M+\beta^2\lvert u\rvert^2$ for all $u\in\RR^{\mathfrak{m}}$ and that $\frac{\Lip(\sigma)^2}{d-2}\log_{\rho^{-1}}(1+\frac32\rho^{-1}T)\le\omega$.
Let $u_{0}\in\mathscr{X}_{0}$ and let $u_t=\mathcal{U}_{0,t}u_{0}$.
Then we have
\begin{equation}
\mathbb{E}\lvert\Dif_{r,z}u_t(x)\rvert_{\Frob}^2\le\frac{C\langle \vvvert u_0\vvvert\rangle^2 G_{t-r}^2(x-z)}{\log\rho^{-1}}.\label{eq:MDbound}
\end{equation}
\end{prop}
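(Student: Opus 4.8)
The plan is to follow the approach of \cite{DG23a}: derive the linear SPDE satisfied by the Malliavin derivative of $u_t(x)$, take second moments to obtain a closed integral inequality, and then solve that inequality by a Grönwall-type iteration with a Gaussian ansatz, in the spirit of \cref{lem:spatial-induction}. Concretely, let $u^{(0)}_t=\clG_tu_0$ and $u^{(n+1)}_t(x)=\clG_tu_0(x)+(\log\rho^{-1})^{-1/2}\int_0^t\clG_{t-s}[\sigma(u^{(n)}_s)\,\dif W^\rho_s](x)$ be the Picard iterates of \cref{eq:umild}; running the argument of \cref{prop:momentbd} along this recursion gives $\vvvert u^{(n)}_t\vvvert\le C\langle\vvvert u_0\vvvert\rangle$ uniformly in $n$ and in $t\in[0,T]$, and each $u^{(n)}_t(x)$ lies in $\mathbb D^{1,2}$. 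Differentiating the recursion, using that $\Dif_{r,z}$ commutes with the deterministic convolution operators and acts on the stochastic integral in the standard way, one obtains $\Dif_{r,z}u^{(0)}_t\equiv 0$ and, for $r<t$,
\begin{equ}
  \Dif_{r,z}u^{(n+1)}_t(x)=\frac{G_{t-r}(x-z)\,\sigma(u^{(n)}_r(z))}{\sqrt{\log\rho^{-1}}}+\frac1{\sqrt{\log\rho^{-1}}}\int_r^t\clG_{t-s}\bigl[\bigl(\nabla\sigma(u^{(n)}_s)\cdot\Dif_{r,z}u^{(n)}_s\bigr)\,\dif W^\rho_s\bigr](x),
\end{equ}
with $\Dif_{r,z}u^{(n+1)}_t(x)=0$ for $r>t$; here $\Dif_{r,z}$ is the derivative in the Cameron--Martin directions of $W^\rho$, consistent with the convention used for the Clark--Ocone formula in the proof of \cref{prop:covbd}.

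Next I would fix $(r,z)$ and set $\phi^{(n)}(t,x)\coloneqq\EE|\Dif_{r,z}u^{(n)}_t(x)|_\Frob^2$ for $t\ge r$. Since the first term above is $\clF_r$-measurable while the stochastic integral has vanishing $\clF_r$-conditional expectation, the corresponding cross term vanishes after taking expectations; then Itô's isometry (with spatial covariance $R^\rho$), submultiplicativity of the Frobenius norm, the bound $|\nabla\sigma(\cdot)\cdot v|_\Frob\le\Lip(\sigma)\,|v|_\Frob$, the positivity of $R^\rho$, the Cauchy--Schwarz inequality, and the estimate $\EE|\sigma(u^{(n)}_r(z))|_\Frob^2\le M+\beta^2\vvvert u^{(n)}_r\vvvert^2\le C\langle\vvvert u_0\vvvert\rangle^2$ noted above together yield
\begin{equ}
  \phi^{(n+1)}(t,x)\le\frac{C\langle\vvvert u_0\vvvert\rangle^2\,G_{t-r}^2(x-z)}{\log\rho^{-1}}+\frac{\Lip(\sigma)^2}{\log\rho^{-1}}\int_r^t\iint R^\rho(y_1-y_2)\prod_{i=1}^2\Bigl(G_{t-s}(x-y_i)\,\phi^{(n)}(s,y_i)^{1/2}\Bigr)\,\dif y_1\,\dif y_2\,\dif s.
\end{equ}

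To close this, I would insert the ansatz $\phi^{(n)}(s,y)\le B\,G_{s-r}^2(y-z)$ on $[r,t]$. The Gaussian factorization identity $G_{t-s}(x-y)G_{s-r}(y-z)=G_{t-r}(x-z)\,G_{\mu(s)}(y-c(s))$, with $\mu(s)=\tfrac{(t-s)(s-r)}{t-r}$ and $c(s)=\tfrac{(s-r)x+(t-s)z}{t-r}$, turns the inner double integral into $G_{t-r}^2(x-z)\,(G_{2\mu(s)}*R^\rho)(0)=\tfrac{G_{t-r}^2(x-z)}{2(d-2)(\mu(s)+\rho)}$ by \cref{eq:HKwithkernelat0}, so the integral term becomes $\tfrac{\Lip(\sigma)^2B}{\log\rho^{-1}}\,G_{t-r}^2(x-z)\int_r^t\tfrac{\dif s}{2(d-2)(\mu(s)+\rho)}$. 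An explicit partial-fraction computation of the last integral shows it is at most $\tfrac1{d-2}\log(1+\tfrac32\rho^{-1}(t-r))\le\tfrac1{d-2}\log(1+\tfrac32\rho^{-1}T)$, so the standing hypothesis $\tfrac{\Lip(\sigma)^2}{d-2}\log_{\rho^{-1}}(1+\tfrac32\rho^{-1}T)\le\omega$ bounds the integral term by $\omega B\,G_{t-r}^2(x-z)$. Hence, by induction, $\phi^{(n)}(t,x)\le B_n\,G_{t-r}^2(x-z)$ with $B_0=0$ and $B_{n+1}=C\langle\vvvert u_0\vvvert\rangle^2/\log\rho^{-1}+\omega B_n$, so $B_n\uparrow C\langle\vvvert u_0\vvvert\rangle^2/((1-\omega)\log\rho^{-1})$; passing to the limit $n\to\infty$, using $\Dif_{r,z}u^{(n)}_t(x)\to\Dif_{r,z}u_t(x)$ in $L^2$ and closedness of the Malliavin derivative, then gives \cref{eq:MDbound}.

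The Malliavin-calculus bookkeeping — differentiability of the iterates, the commutation relation for the stochastic integral, and the passage to the limit — is routine. The one step requiring genuine care is the sharp evaluation of $\int_r^t\tfrac{\dif s}{\mu(s)+\rho}$: we need it with precisely the constant $\tfrac32$ that appears in the hypothesis on $\rho$, whereas a crude bound such as $\mu(s)+\rho\ge\tfrac12\bigl((s-r)\wedge(t-s)\bigr)+\rho$ loses a factor of two and would force a suboptimal restriction on $\Lip(\sigma)$. I expect this to be the main obstacle; everything else parallels \cref{lem:spatial-induction} and the corresponding estimates in \cite{DG23a}.
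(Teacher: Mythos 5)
Your argument is essentially the paper's own proof: Picard iterates, the Malliavin derivative of the mild-form recursion, the closed second-moment inequality, and then the Gaussian-ansatz iteration with the factorization $G_{t-s}(x-y)G_{s-r}(y-z)=G_{t-r}(x-z)G_{\mu(s)}(y-c(s))$, the evaluation via \cref{eq:HKwithkernelat0}, and the explicit bound $\int_r^t\frac{\dif s}{2(\mu(s)+\rho)}\le\log(1+\tfrac32\rho^{-1}(t-r))$ feeding the geometric recursion $B_{n+1}=\lambda_1+\omega B_n$ — the paper simply packages this last induction as \cref{prop:MDinduction}. The only bookkeeping point you gloss over is that $\sigma$ is merely Lipschitz, so $\nabla\sigma(u^{(n)}_s)$ must be replaced by a bounded adapted surrogate (the paper adapts \cite[Proposition~1.2.4]{Nua06}); this does not affect the estimates.
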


\begin{proof}
Define, for $t\ge0$,
$u_t^{0}(x)=u_{0}(x)$ and, for $n\ge1$,
\begin{equation}
    u_t^{n}(x)=\mathcal{G}_tu_{0}(x)+\frac1{\sqrt{\log\rho^{-1}}}\int_{0}^t\clG_{t-s}[\sigma(u_{s}^{n-1})\,\dif W_{s}^{\rho}](x).\label{eq:unmild}
\end{equation}
It is standard that $(u^n_t)$ converges to $(u_t)$ (for fixed $\rho>0$, in probability, uniformly on compact sets) as $n\to \infty$.
For $0<r<t$, and $z,x\in\RR^2$, we can take Malliavin derivatives in \cref{eq:unmild} %
to obtain
\begin{align*}
\Dif_{r,z}u_t^{n}(x) & =\frac1{\sqrt{\log\rho^{-1}}}G_{t-r}(x-z)\sigma(u_{s}^{n-1}(z))\\
 & \qquad+\frac1{\sqrt{\log\rho^{-1}}}\int_{r}^t\int G_{t-s}(x-z)\underline{\nabla\sigma(u_{s}^{n-1}(z))}\cdot\Dif_{r,z}u_{s}^{n-1}(z)\,\dif W_{s}^{\rho}(z),
\end{align*}
where $(\underline{\nabla\sigma(u_{s}^{n-1}(z))})$ is an adapted process satisfying
\[
    \sup_{s,z}\|\underline{\nabla\sigma(u_{s}^{n-1}(z))}\|_{\Frob}\le \Lip(\sigma)\qquad\text{almost surely.}
\]
In the case when $\sigma$ is in fact continuously differentiable, then we can indeed take $\underline{\nabla\sigma(u_{s}^{n-1}(z))} = \nabla\sigma(u_{s}^{n-1}(z))$. If this is not the case, then the statement nonetheless holds, as can be seen be a simple adaptation of the proof of 
\cite[Proposition~1.2.4]{Nua06}. That proposition concerns scalar-valued functions and finite-dimensional noise, but the same argument holds for vector-valued functions, and the infinite-dimensional noise we consider can be handled by approximation.

This having been established, we can continue by writing for all $n\ge1$ (using also the assumption on $\sigma$),
\begin{align}
\mathbb{E} & \lvert\Dif_{r,z}u_t^{n}(x)\rvert_{\Frob}^2\nonumber \\
           & \le\frac{M+\beta^2\sup_{s\in [0,T]}\vvvert u_{s}^{n-1}\vvvert^2}{\log\rho^{-1}}G_{t-r}^2(x-z)\nonumber \\
 & \quad+\frac{\Lip(\sigma)^2}{\log\rho^{-1}}\int_{r}^t\iint R^{\rho}(y_1-y_2)\prod_{i=1}^2\left(G_{t-s}(x-y_{i})\left(\mathbb{E}\left\lvert\Dif_{r,z}u_{s}^{n-1}(y_{i})\right\rvert_{\Frob}^2\right)^{1/2}\right)\,\dif y_1\,\dif y_2\,\dif s.\label{eq:Difurecurrence}
\end{align}
Moreover, we have
\[
\mathbb{E}\lvert\Dif_{r,z}u_t^{0}(x)\rvert_{\Frob}^2=0
\]
since $u_t^{0}(x)=u_{0}(x)$ is deterministic.

Now we can apply \cref{prop:MDinduction} below with
\[
\lambda_1=M+\beta^2\sup_{s\in [0,T]}\vvvert u_{s}^{n-1}\vvvert^2\qquad\text{and}\qquad\lambda_2=\Lip(\sigma)^2,
\]
and time shifted by $r$,
and note that $\lambda_1\le C\langle\vvvert u_0\vvvert\rangle^2$ by an easy modification of \cref{prop:momentbd} (to apply to the $u_n$s rather than to $u$), to
obtain
\[
\mathbb{E}\lvert\Dif_{r,z}u_t^{n}(x)\rvert_{\Frob}^2\le\frac{C\langle \vvvert u_0\vvvert\rangle^2 G_{t-r}^2(x-z)}{\log\rho^{-1}}.
\]
Passing to the limit as $n\to\infty$,
we obtain \cref{eq:MDbound}.
\end{proof}
The following proposition is an analogue of \cite[Lemma 2.7]{GHL23}
that allows us to analyze the recurrence \cref{eq:Difurecurrence}. 
\begin{prop}
\label{prop:MDinduction}For each $\omega<1$ and $\lambda_1,\lambda_2,T\in(0,\infty)$,
there is a constant $C=C(\omega,\lambda_1,\lambda_2,T)$ such
that the following holds. Suppose that $g_{n}\colon[0,T]\times\RR^{d}\to\RR$
are such that
\[
g_{0}(t,x)=0\qquad\text{for all }t\in[0,T]\text{ and }x\in\RR^{d}
\]
and we have $\lambda_1,\lambda_2,T\ge0$ such that
\[
g_{n+1}^2(t,x)\le\frac1{\log\rho^{-1}}\left(\lambda_1G_t^2(x)+\lambda_2\int_{0}^tJ_t(s,x)[g_{n}]\,\dif s\right)\quad\text{for all }x\in\mathbb{R}^{d}\text{ and }t\in[0,T],
\]
where we have defined
\[
J_t^{\rho}(s,x)[g]=\iint R^{\rho}(y_1-y_2)\prod_{i=1}^2\left(G_{t-s}(x-y_{i})g(s,y_{i})\right)\,\dif y_1\,\dif y_2.
\]
Then, as long as 
\begin{equation}
\frac{\lambda_2}{d-2}\cdot\frac{\log(1+\frac32T\rho^{-1})}{\log\rho^{-1}}\le\omega,\label{eq:MDinductionomegacondition}
\end{equation}
we have
\[
g_{n}(t,x)\le\frac{CG_t(x)}{\log\rho^{-1}}\qquad\text{for all }x\in\mathbb{R}^{d}\text{ and }t\in[0,T].
\]
\end{prop}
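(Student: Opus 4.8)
\section*{Proof proposal for \texorpdfstring{\cref{prop:MDinduction}}{Proposition}}

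The plan is to prove the estimate by induction on $n$, with the ansatz $\lvert g_n(t,x)\rvert\le c_nG_t(x)$ for all $(t,x)\in[0,T]\times\RR^d$, where $(c_n)_{n\ge0}$ is a scalar sequence to be controlled. The base case is $c_0=0$, forced by $g_0\equiv0$, and since the recurrence only constrains $g_{n+1}^2$, a bound of this shape is exactly what it propagates. For the inductive step, assuming $\lvert g_n(s,y)\rvert\le c_nG_s(y)$, the integrand defining $J^\rho_t(s,x)[g_n]$ is dominated in absolute value by $c_n^2R^\rho(y_1-y_2)\prod_{i=1}^2G_{t-s}(x-y_i)G_s(y_i)$, so everything comes down to bounding
\[
I_t(x)\coloneqq\int_0^t\iint R^\rho(y_1-y_2)\prod_{i=1}^2G_{t-s}(x-y_i)G_s(y_i)\,\dif y_1\,\dif y_2\,\dif s.
\]

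The crucial computation is that $R^\rho$, although spatially nonlocal, ends up integrated against a single Gaussian. By the Chapman--Kolmogorov identity $G_{t-s}(x-y)G_s(y)=G_t(x)\,G_{s(t-s)/t}\bigl(y-\tfrac st x\bigr)$, followed by the translation $y_i\mapsto y_i+\tfrac st x$, the inner double integral becomes $G_t(x)^2\,(G_{2s(t-s)/t}*R^\rho)(0)$, which by \cref{eq:HKwithkernelat0} equals $\dfrac{G_t(x)^2}{(d-2)\bigl(2s(t-s)/t+2\rho\bigr)}$. It then remains to estimate the elementary one-dimensional integral $\displaystyle\int_0^t\frac{\dif s}{2s(t-s)/t+2\rho}$: writing $2s(t-s)/t+2\rho=\tfrac2t\bigl(s(t-s)+t\rho\bigr)$ and completing the square in $s$, one evaluates it exactly as $\dfrac{t}{2a}\log\dfrac{a+t/2}{a-t/2}$ with $a=\sqrt{t^2/4+t\rho}$, and the bounds $t/2\le a\le t/2+\rho$ give $\dfrac{t}{2a}\le1$ and $\dfrac{a+t/2}{a-t/2}\le1+\tfrac32T\rho^{-1}$ once $\rho$ is sufficiently small. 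Hence $I_t(x)\le\dfrac{G_t(x)^2}{d-2}\log(1+\tfrac32T\rho^{-1})$ — and this is exactly the logarithm in \cref{eq:MDinductionomegacondition}, which is the reason the hypothesis is phrased that way.

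Feeding this into the recurrence yields $g_{n+1}^2(t,x)\le\dfrac{G_t(x)^2}{\log\rho^{-1}}\Bigl(\lambda_1+\dfrac{\lambda_2\log(1+\frac32T\rho^{-1})}{d-2}\,c_n^2\Bigr)$, so the ansatz propagates with $c_{n+1}^2$ equal to $\dfrac1{\log\rho^{-1}}$ times an affine function of $c_n^2$; by \cref{eq:MDinductionomegacondition} the coefficient of $c_n^2$ in $c_{n+1}^2$ is at most $\omega<1$. Iterating from $c_0=0$ and summing the ensuing geometric series (convergent since $\omega<1$) bounds $\sup_nc_n$ uniformly in $n$, with a constant depending only on $\omega$ and $\lambda_1$ (and, through the smallness threshold on $\rho$, on $T$ and $d$). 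This closes the induction and establishes the claimed bound.

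The step I expect to be the main obstacle is the integral identity. One needs the Chapman--Kolmogorov collapse so that $R^\rho$ meets only one heat kernel — this is what both makes \cref{eq:HKwithkernelat0} applicable and eliminates all $x$-dependence except the overall prefactor $G_t(x)^2$, which is essential for propagating the $G_t$-shaped ansatz — and then one must produce the $s$-integral estimate in precisely the form $\log(1+\frac32T\rho^{-1})$, since that is the form in which the hypothesis \cref{eq:MDinductionomegacondition} is supplied and it must be sharp enough for the geometric series to converge. A secondary, routine point is to check that the ansatz $\lvert g_n\rvert\le c_nG_t$ makes sense at every stage, which follows by induction from $g_0\equiv0$ and the recurrence, so no separate regularity input on the $g_n$ is needed.
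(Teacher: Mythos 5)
Your proof is correct and follows essentially the same route as the paper's: the ansatz $\lvert g_n\rvert\le c_nG_t$ with $c_n^2$ playing the role of the paper's $H_n/\log\rho^{-1}$, the same Chapman--Kolmogorov collapse reducing the spatial double integral to $G_t(x)^2(G_{2s(t-s)/t}*R^\rho)(0)$ evaluated via \cref{eq:HKwithkernelat0}, the same one-dimensional $s$-integral bounded by $\log(1+\frac32 T\rho^{-1})$, and the same affine recursion $c_{n+1}^2\le\lambda_1/\log\rho^{-1}+\omega c_n^2$ summed as a geometric series. The only remark worth making is that, exactly like the paper's own argument, your iteration actually yields $c_n^2\le\lambda_1/((1-\omega)\log\rho^{-1})$, i.e.\ $g_n\le CG_t/\sqrt{\log\rho^{-1}}$ rather than the literal $CG_t/\log\rho^{-1}$ in the statement (evidently a typo for the squared form, which is what is invoked in the proof of \cref{prop:MDbd}).
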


\begin{proof}
We define
\[
f_{0}(t,x)=0
\]
and
\begin{equation}
f_{n}^2(t,x)=\frac1{\log\rho^{-1}}\left(\lambda_1G_t^2(x)+\lambda_2\int_{0}^tJ_t(s,x)[f_{n-1}]\,\dif s\right)\;,\label{eq:fn2bd}
\end{equation}
so that $g_{n}\le f_{n}$ for all $n$.

We then look for an inductive choice of $H_n$ with $H_0 = 0$ and such that
\begin{equation}
f_{n}^2(t,x)\le\frac{H_{n}}{\log\rho^{-1}}G_t^2(x),\label{eq:fninduction}
\end{equation}
We have
\begin{align*}
\int_{0}^t & J_t(s,x)[f_{n}]\,\dif s\\
 & =\int_{0}^t\iint G_{t-s}(x-y_1)G_{t-s}(x-y_2)R^\rho(y_1-y_2)f_{n}(s,y_1)f_{n}(s,y_2)\,\dif y_1\,\dif y_2\,\dif s\\
 & \le\frac{H_{n}}{\log\rho^{-1}}\int_{0}^t\iint R^{\rho}(y_1-y_2)\prod_{i=1}^2\left(G_{t-s}(x-y_{i})G_{s}(y_{i})\right)\,\dif y_1\,\dif y_2\,\dif s\\
\overset{\cref{eq:decomposeRrho}}&{=}\frac{(2\pi)^{d/2}H_{n}}{2\log\rho^{-1}}\int_{0}^t\iint\left(\prod_{i=1}^2\left(G_{t-s}(x-y_{i})G_{s}(y_{i})\right)\right)\int_{0}^{\infty}\nu^{d/2-2}G_{\nu+2\rho}(y_1-y_2)\\&\qquad\qquad\qquad\qquad\qquad\qquad\qquad\qquad\qquad\qquad\qquad\qquad\qquad\,\dif\nu\,\dif y_1\,\dif y_2\,\dif s.
\end{align*}
We use the fact that
\[
G_{t-s}(x-y)G_{s}(y)=G_t(x)G_{\frac{s}t(t-s)}\left(y-\frac{s}tx\right)
\]
to turn this into
\begin{align*}
\int_{0}^t & J_t(s,x)[f_{n}]\,\dif s\\
             & \le\frac{(2\pi)^{d/2}H_{n}}{2\log\rho^{-1}}G_t(x)^2\int_{0}^t\iint\left(\prod_{i=1}^2G_{\frac{s}t(t-s)}\left(y_{i}-\frac{s}tx\right)\right)\int_{0}^{\infty}\nu^{d/2-2}G_{\nu+2\rho}(y_1-y_2)\\&\qquad\qquad\qquad\qquad\qquad\qquad\qquad\qquad\qquad\qquad\qquad\qquad\qquad\,\dif\nu\,\dif y_1\,\dif y_2\,\dif s\\
 & =\frac{(2\pi)^{d/2}H_{n}}{2\log\rho^{-1}}G_t(x)^2\int_{0}^t\int_{0}^{\infty}\nu^{d/2-2}G_{2\frac{s}t(t-s)+\nu+2\rho}(0)\,\dif\nu\,\dif s\\
 & =\frac{H_{n}}{2\log\rho^{-1}}G_t^2(x)\int_{0}^t\int_{0}^{\infty}\frac{\nu^{d/2-2}}{(2\frac{s}t(t-s)+\nu+2\rho)^{d/2}}\,\dif\nu\,\dif s\\
 & =\frac{H_{n}G_t^2(x)}{(d-2)\log\rho^{-1}}\int_{0}^t\frac{\dif s}{2\frac{s}t(t-s)+2\rho}.%
\end{align*}
We estimate the last integral as
\[
    \int_{0}^t\frac{\dif s}{2\frac{s}t(t-s)+2\rho}
  = \frac{t}{\sqrt{t(t+4\rho)}}\log\frac{t+\sqrt{t(4\rho+t)}+2\rho}{2\rho}
  \le \log\frac{3t+2\rho}{2\rho}.
\]
Using the last two displays in \cref{eq:fn2bd}, we obtain the bound
\begin{align*}
f_{n}^2(t,x) %
 & \le\frac1{\log\rho^{-1}}\left(\lambda_1+\lambda_2\frac{H_{n}}{d-2}\frac{\log(1+\frac32t\rho^{-1})}{\log\rho^{-1}}\right)G_t^2(x)\\
\overset{\cref{eq:MDinductionomegacondition}}&{\le}\frac1{\log\rho^{-1}}\left(\lambda_1+\omega H_{n}\right)G_t^2(x)\;.
\end{align*}
It follows that the choice $H_{n+1}=\lambda_1+\omega H_{n}$
allows us to satisfy the bound \cref{eq:fninduction} for all $n$. Since $H_{0}=0$ and $\omega<1$,
we find that $H_{n} < \frac{\lambda_1}{1-\omega}$
for all $n$, completing the proof.
\end{proof}

\section{The approximate decoupling function\label{sec:approx-decoupling}} %

\subsection{Definition and regularity}
We define the matrix-valued function
\begin{equation}
    L_{\rho}(q,a)=\EE[\sigma(\mathcal{U}_{0,\rho^{1-q}}a(x))]\qquad\text{for }a\in\RR^\fkm\text{ and }q\ge0.\label{eq:Lrhodef}
\end{equation}
The quantity $J_\rho \coloneqq L_\rho/\sqrt{2(d-2)}$ (see \cref{eq:Jrhodef} below) will be an approximation of the decoupling function $J$ appearing in \cref{eq:FBSDE-J}. Establishing this will be the goal of the next two sections.
We begin with some temporal and spatial regularity properties for $L_{\rho}$. These will in particular give us a compactness statement for $L_\rho$ in \cref{prop:Lcompactness} below.
\begin{lem}
\label{lem:lem:Lbdd}There is a constant $C=C(\sigma)\in(1,\infty)$
such that we have, for all $q\in[0,1]$ and all $\rho\in(0,C^{-1})$,
that
\begin{equation}
L_{\rho}(q,a)\le C\langle a\rangle.\label{eq:decouplingbdd}
\end{equation}
\end{lem}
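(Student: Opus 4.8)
The claim is a uniform-in-$\rho$ linear-growth bound $L_\rho(q,a)\le C\langle a\rangle$ for the matrix-valued function $L_\rho(q,a)=\EE[\sigma(\clU_{0,\rho^{1-q}}a(x))]$, where the argument $a$ denotes the constant initial condition $u_0\equiv a$. The natural strategy is simply to reduce the estimate to the moment bound \cref{prop:momentbd} that was established in \cref{sec:moment-bd}.

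First I would unwind the definition: the operator $\clU_{0,t}$ applied to the constant function $a$ produces the solution $u_t$ of \cref{eq:ueqn} at time $t=\rho^{1-q}$ with initial data $u_0\equiv a\in\RR^\fkm$. Since $\sigma$ is uniformly Lipschitz, we have $\lvert\sigma(v)\rvert_\Frob\le \lvert\sigma(0)\rvert_\Frob+\Lip(\sigma)\lvert v\rvert$, so by Jensen's inequality applied to the (convex) Frobenius norm,
\[
\lvert L_\rho(q,a)\rvert_\Frob=\lvert\EE[\sigma(u_{\rho^{1-q}}(x))]\rvert_\Frob\le \EE\lvert\sigma(u_{\rho^{1-q}}(x))\rvert_\Frob\le \lvert\sigma(0)\rvert_\Frob+\Lip(\sigma)\,\EE\lvert u_{\rho^{1-q}}(x)\rvert.
\]
By Cauchy--Schwarz, $\EE\lvert u_{\rho^{1-q}}(x)\rvert\le \vvvert u_{\rho^{1-q}}\vvvert$, so it suffices to bound $\vvvert u_t\vvvert$ uniformly over $t=\rho^{1-q}\in(0,1]$ (which is the relevant range since $q\in[0,1]$, $\rho<1$).

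Now I would invoke \cref{prop:momentbd} with $\ell=2$, $\beta$ any fixed constant in $(\Lip(\sigma),\sqrt{2(d-2)})$ (possible since $\Lip(\sigma)<\sqrt{d-2}<\sqrt{2(d-2)}$ by the standing hypothesis, or more directly since the hypotheses of this section presumably assume $\Lip(\sigma)<\sqrt{2(d-2)}$), and $M=\lvert\sigma(0)\rvert_\Frob^2$ (or a suitable constant) so that $\lvert\sigma(u)\rvert_\Frob^2\le M+\beta^2\lvert u\rvert^2$; this holds because $\Lip(\sigma)<\beta$. Taking $T=1$ in \cref{prop:momentbd} gives a constant $C_0=C_0(\sigma)\in(1,\infty)$ (depending only on $M,\beta,T$, hence only on $\sigma$ and $d$) such that for all $\rho<C_0^{-1}$ and all $t\in[0,1]$, $\vvvert u_t\vvvert\le C_0\langle\vvvert u_0\vvvert\rangle$. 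Since $u_0\equiv a$ is the constant function with value $a$, $\vvvert u_0\vvvert=\lvert a\rvert$, so $\vvvert u_t\vvvert\le C_0\langle a\rangle$. Combining with the display above yields $\lvert L_\rho(q,a)\rvert_\Frob\le \lvert\sigma(0)\rvert_\Frob+\Lip(\sigma)C_0\langle a\rangle\le C\langle a\rangle$ for a suitable $C=C(\sigma)$, after enlarging $C$ to absorb $\lvert\sigma(0)\rvert_\Frob$ using $\langle a\rangle\ge 1$. Finally I would note that $\rho^{1-q}\le 1$ so that the time index stays in $[0,T]$, and set the constant in \cref{lem:lem:Lbdd} to $\max(C_0, \lvert\sigma(0)\rvert_\Frob+\Lip(\sigma)C_0)$ (or any convenient dominating constant).

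This lemma is essentially a direct corollary and I do not anticipate a genuine obstacle; the only points requiring a word of care are (i) checking that the hypothesis $\Lip(\sigma)<\sqrt{2(d-2)}$ in \cref{prop:momentbd} is met — it is, even under the stronger standing assumption $\Lip(\sigma)<\sqrt{d-2}$; (ii) ensuring the constant $C$ depends only on $\sigma$ (and $d$), not on $q$ or $\rho$ — this is automatic since $T=1$ is fixed and $t=\rho^{1-q}\le 1$ independently of $q,\rho$; and (iii) the smallness condition $\rho\in(0,C^{-1})$ in the conclusion matches the smallness condition $\rho\in(0,C_0^{-1})$ required by \cref{prop:momentbd}, so one just takes $C$ at least as large as the $C$ from \cref{prop:momentbd}.
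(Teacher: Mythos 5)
Your proposal is correct and follows exactly the route the paper takes: its proof of \cref{lem:lem:Lbdd} is the one-line observation that the bound follows from \cref{prop:momentbd} together with the Lipschitz property of $\sigma$, which is precisely the reduction you carry out in detail (Lipschitz bound on $\sigma$, Jensen/Cauchy--Schwarz, then the $\ell=2$ moment bound with $T=1$ and constant initial data $u_0\equiv a$). Your attention to the smallness condition on $\rho$ and the applicability of the hypothesis $\Lip(\sigma)<\sqrt{2(d-2)}$ is exactly the right bookkeeping.
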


\begin{proof}
This is a consequence of \cref{prop:momentbd} and the fact that
$\sigma$ is Lipschitz.
\end{proof}
\begin{lem}
\label{lem:Ltimereg}There is a constant $C\in(1,\infty)$ such that
we have, for all $q_{1},q_{2}\in[0,1]$ and all $\rho\in(0,C^{-1})$,
that
\begin{equation}
\lvert L_{\rho}(q_{1},a)-L_{\rho}(q_{2},a)\rvert_{\mathrm{F}}\le C\Lip(\sigma)\langle a\rangle\left(\lvert q_{2}-q_{1}\rvert^{1/2}+\frac{1}{\sqrt{\log\rho^{-1}}}\right).\label{eq:Ltimereg}
\end{equation}
\end{lem}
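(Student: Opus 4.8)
Reduce to estimating $\lvert L_\rho(q_1,a)-L_\rho(q_2,a)\rvert_\Frob=\lvert\EE[\sigma(u_{t_1}(x))]-\EE[\sigma(u_{t_2}(x))]\rvert_\Frob$, where $u_{t_i}\coloneqq\mathcal U_{0,t_i}a$ and $t_i\coloneqq\rho^{1-q_i}$; assume without loss of generality $q_1\le q_2$, so $0<t_1\le t_2\le1$ (using $\rho<1$ and $q_i\in[0,1]$). The strategy is to realize the two pointwise laws on a single probability space with a good coupling and use that $\sigma$ is $\Lip(\sigma)$-Lipschitz. The obvious coupling --- reading off a single solution of \cref{eq:ueqn} started from $a$ at the two times $t_1<t_2$ --- is useless here: the noise accumulated on $[t_1,t_2]$ displaces the solution by $O(1)$ whenever $t_2-t_1\asymp1$, so $\EE\lvert u_{t_1}(x)-u_{t_2}(x)\rvert^2$ need not be small. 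Instead I would use time-homogeneity together with \cref{prop:turnoffnoise}.

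Since $W^\rho$ is white in time it has stationary increments, so \cref{eq:ueqn} is time-homogeneous: the solution started from a deterministic datum and run for a fixed duration has a law depending only on datum and duration. Hence $\mathcal U_{t_2-t_1,t_2}a(x)\overset{d}{=}\mathcal U_{0,t_1}a(x)=u_{t_1}(x)$, so $L_\rho(q_1,a)=\EE[\sigma(\mathcal U_{t_2-t_1,t_2}a(x))]$. Now apply \cref{prop:turnoffnoise} with $\tau_1=0$, $\tau_2=t_2-t_1$, $t=t_2\ (\le1)$, any $\lambda\in(\Lip(\sigma),\sqrt{2(d-2)})$, and initial datum the constant function $a\in\mathscr{X}_0$ (so $\vvvert a\vvvert=\lvert a\rvert$, $\langle\vvvert a\vvvert\rangle=\langle a\rangle$, and $\mathcal G_{\tau_2-\tau_1}a=a$, whence $\mathcal U_{\tau_2,t}\mathcal G_{\tau_2-\tau_1}a=\mathcal U_{t_2-t_1,t_2}a$). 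Since $t-\tau_1=t_2$ and $t-\tau_2=t_1$, this yields
\[
\vvvert\mathcal U_{t_2-t_1,t_2}a-u_{t_2}\vvvert^2\le C\langle a\rangle^2\,\frac{\log\frac{t_2+\rho}{t_1+\rho}+1}{\log\rho^{-1}}.
\]
The point of this choice is that $t-\tau_1$ and $t-\tau_2$ are comparable and both much larger than $\tau_2-\tau_1$, so the error factor in \cref{prop:turnoffnoise} collapses to $\log\frac{t_2+\rho}{t_1+\rho}$ rather than the uncontrolled $\log\frac{t_2-t_1+\rho}{\rho}$ that the naive coupling would produce.

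Combining $L_\rho(q_1,a)=\EE[\sigma(\mathcal U_{t_2-t_1,t_2}a(x))]$ with Jensen's inequality, the Lipschitz bound on $\sigma$, and Cauchy--Schwarz gives
\[
\lvert L_\rho(q_1,a)-L_\rho(q_2,a)\rvert_\Frob\le\Lip(\sigma)\,\EE\bigl\lvert\mathcal U_{t_2-t_1,t_2}a(x)-u_{t_2}(x)\bigr\rvert\le\Lip(\sigma)\,\vvvert\mathcal U_{t_2-t_1,t_2}a-u_{t_2}\vvvert,
\]
into which I insert the previous display. It remains only to bound the logarithm: since $\rho=\rho^1\le\rho^{1-q_i}=t_i$ one has $t_i+\rho\le2t_i$, hence $\frac{t_2+\rho}{t_1+\rho}\le\frac{2t_2}{t_1}=2\rho^{-(q_2-q_1)}$ and $\log\frac{t_2+\rho}{t_1+\rho}\le\log2+(q_2-q_1)\log\rho^{-1}$, so that $\frac{\log\frac{t_2+\rho}{t_1+\rho}+1}{\log\rho^{-1}}\le(q_2-q_1)+\frac{\log2+1}{\log\rho^{-1}}$; taking square roots and using $\sqrt{x+y}\le\sqrt x+\sqrt y$ then yields \cref{eq:Ltimereg} with $q_2-q_1=\lvert q_2-q_1\rvert$ (the case $q_1=q_2$ being trivial). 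The only genuinely nontrivial point is spotting this coupling --- that \cref{prop:turnoffnoise} must be invoked with the noise switched off on an \emph{early} window of length $t_2-t_1$ and the solution read off at the \emph{late} time $t_2$, combined with time-homogeneity to identify $\mathcal U_{t_2-t_1,t_2}a$ in law with $u_{t_1}$; everything after that is bookkeeping.
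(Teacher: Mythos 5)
Your proof is correct and follows essentially the same route as the paper: identify $L_\rho(q_1,a)$ in law with the solution run on $[t_2-t_1,t_2]$ by time-homogeneity, apply \cref{prop:turnoffnoise} with the noise off on the early window $[0,t_2-t_1]$ (the constant datum making $\clG_{\tau_2-\tau_1}a=a$), and convert $\log\frac{t_2+\rho}{t_1+\rho}$ into $\lvert q_2-q_1\rvert\log\rho^{-1}+O(1)$. The bookkeeping at the end matches the paper's final two inequalities.
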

\begin{proof}
Assume without loss of generality that $q_{2}\ge q_{1}$. Then we have
\begin{align*}
\lvert L_{\rho}(q_{2},a)-L_{\rho}(q_{1},a)\rvert_{\mathrm{F}} & \le\EE\lvert\sigma(\mathcal{U}_{0,\rho^{1-q_{2}}}a(x))-\sigma(\mathcal{U}_{\rho^{1-q_{2}}-\rho^{1-q_{1}},\rho^{1-q_{2}}}a(x))\rvert_{\mathrm{F}}\\
 & \le\Lip(\sigma)\EE\lvert\mathcal{U}_{0,\rho^{1-q_{2}}}a(x)-\mathcal{U}_{\rho^{1-q_{2}}-\rho^{1-q_{1}},\rho^{1-q_{2}}}a(x)\rvert\\
 & \le\Lip(\sigma)\left(\EE\lvert\mathcal{U}_{0,\rho^{1-q_{2}}}a(x)-\mathcal{U}_{\rho^{1-q_{2}}-\rho^{1-q_{1}},\rho^{1-q_{2}}}\clG_{\rho^{1-q_{1}}}a(x)\rvert^{2}\right)^{1/2}\\
 & \le C\Lip(\sigma)\langle a\rangle\left(\frac{\log\left(\frac{\rho^{1-q_{2}}+\rho}{\rho^{1-q_{1}}+\rho}\right)+1}{\log\rho^{-1}}\right)^{1/2}\\
 & \le C\Lip(\sigma)\langle a\rangle\left(\lvert q_{2}-q_{1}\rvert^{1/2}+\frac{1}{\sqrt{\log\rho^{-1}}}\right),
\end{align*}
with the penultimate inequality following from \cref{prop:turnoffnoise,prop:momentbd}.
\end{proof}
\begin{lem}
\label{lem:Lspacereg}There exists a constant $C<\infty$ such that, for
all $q\in[0,1]$ and all $a_{1},a_{2}\in\RR^{\mathfrak{m}}$,
\begin{equation}
\lvert L_{\rho}(q,a_{1})-L_{\rho}(q,a_{2})\rvert_{\mathrm{F}}\le C\Lip(\sigma)\lvert a_{1}-a_{2}\rvert.\label{eq:Lspacereg}
\end{equation}
\end{lem}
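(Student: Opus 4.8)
The plan is to reduce the lemma to a Lipschitz-stability estimate for the mild solution of \cref{eq:ueqn} with respect to its (constant) initial data, proved along exactly the same lines as \cref{prop:turnoffnoise}. Since $\sigma$ is Lipschitz, viewing $a_1,a_2$ as constant functions on $\RR^d$ we have
\begin{equation*}
\lvert L_\rho(q,a_1)-L_\rho(q,a_2)\rvert_\Frob\le\Lip(\sigma)\,\EE\bigl\lvert\mathcal{U}_{0,\rho^{1-q}}a_1(x)-\mathcal{U}_{0,\rho^{1-q}}a_2(x)\bigr\rvert\le\Lip(\sigma)\,\vvvert\mathcal{U}_{0,\rho^{1-q}}a_1-\mathcal{U}_{0,\rho^{1-q}}a_2\vvvert\,,
\end{equation*}
so it suffices to show $\vvvert\mathcal{U}_{0,t}a_1-\mathcal{U}_{0,t}a_2\vvvert\le C\lvert a_1-a_2\rvert$ for all $t\in[0,1]$ (note that $\rho^{1-q}\le1$ since $q\le1$).

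First I would set $v_t\coloneqq\mathcal{U}_{0,t}a_1-\mathcal{U}_{0,t}a_2$ and subtract the mild formulations \cref{eq:umild} of the two solutions. Since $a_1-a_2$ is constant, $\clG_t(a_1-a_2)=a_1-a_2$, so the deterministic part of $v_t$ is simply $a_1-a_2$ while the stochastic integral has mean zero; hence $\EE\lvert v_t(x)\rvert^2$ equals $\lvert a_1-a_2\rvert^2$ plus the second moment of the stochastic integral. Computing that second moment via the Itô isometry and the noise covariance \cref{eq:noise-covariance}, then applying the Cauchy--Schwarz inequality (submultiplicativity of the Frobenius norm), the Lipschitz bound $\lvert\sigma(u)-\sigma(\tilde u)\rvert_\Frob\le\Lip(\sigma)\lvert u-\tilde u\rvert$, the pointwise estimate $\EE[\lvert v_s(y_1)\rvert\,\lvert v_s(y_2)\rvert]\le\vvvert v_s\vvvert^2$, and finally \cref{eq:applyHKwithkernelat0} to evaluate the resulting $y$-integral, yields the closed inequality
\begin{equation*}
\vvvert v_t\vvvert^2\le\lvert a_1-a_2\rvert^2+\frac{\Lip(\sigma)^2}{2(d-2)\log\rho^{-1}}\int_0^t\frac{\vvvert v_s\vvvert^2}{t-s+\rho}\,\dif s\,,\qquad t\in[0,1]\,.
\end{equation*}

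To close the argument I would invoke the singular Grönwall inequality \cite[Lemma~4.3]{DG22}, exactly as at the end of the proof of \cref{prop:turnoffnoise}: since $\int_0^t(t-s+\rho)^{-1}\,\dif s=\log\frac{t+\rho}{\rho}\le\log\frac2\rho$ for $t\le1$, the relevant coefficient is $\frac{\Lip(\sigma)^2}{2(d-2)}\cdot\frac{\log(2/\rho)}{\log\rho^{-1}}$, which is strictly below $1$ once $\rho$ is small enough (using $\Lip(\sigma)<\sqrt{2(d-2)}$, as throughout the paper), giving $\vvvert v_t\vvvert^2\le C\lvert a_1-a_2\rvert^2$ uniformly for $t\in[0,1]$ and $\rho\in(0,C^{-1})$; substituting back into the first display yields \cref{eq:Lspacereg}. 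The only point requiring slight care is the a priori boundedness of $t\mapsto\vvvert v_t\vvvert$ on $[0,1]$ needed to launch the Grönwall step — for fixed $\rho>0$ this holds because $R^\rho$ is bounded, so \cref{eq:ueqn} is a standard SPDE with finite second moments on compact time intervals, but it can also be avoided by running the same estimate along the Picard iterates $u^n$ of \cref{eq:unmild} (as in the proof of \cref{prop:MDbd}) and passing to the limit. I do not anticipate any genuine obstacle here: this is a routine stability estimate of the same type as those already developed in \cref{sec:turn-off-noise}.
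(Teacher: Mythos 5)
Your proposal is correct and follows essentially the same route as the paper: reduce to an $L^2$ stability estimate via the Lipschitz bound on $\sigma$, derive the closed inequality $\vvvert v_t\vvvert^2\le\lvert a_1-a_2\rvert^2+\frac{\Lip(\sigma)^2}{2(d-2)\log\rho^{-1}}\int_0^t\frac{\vvvert v_s\vvvert^2}{t-s+\rho}\,\dif s$ from the mild formulation and \cref{eq:HKwithkernelat0}, and close with the singular Grönwall lemma from \cite{DG22} (the paper cites its Lemma~3.4, you its Lemma~4.3 — the same type of tool, with constants adjusted). Your extra remarks on the a priori boundedness and the smallness of $\rho$ are consistent with the paper's standing assumptions.
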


\begin{proof}
We have
\begin{align}
\lvert L_{\rho}(q,a_{1})-L_{\rho}(q,a_{2})\rvert_{\mathrm{F}} & \le\EE\lvert\sigma(\mathcal{U}_{0,\rho^{1-q}}a_{1}(x))-\sigma(\mathcal{U}_{0,\rho^{1-q}}a_{2}(x))\rvert_{\mathrm{F}}\nonumber \\
 & \le\Lip(\sigma)\EE\lvert \mathcal{U}_{0,\rho^{1-q}}a_{1}(x)-\mathcal{U}_{0,\rho^{1-q}}a_{2}(x)\rvert.\label{eq:Jspaceregstart}
\end{align}
Now we perform an $L^{2}$ analysis along the lines of \cite[Proposition 3.3]{DG22}.
We have, defining $v_{t}^{(i)}(x)=\mathcal{U}_{0,\rho^{1-q}}a_{i}(x)$,
that
\begin{align*}
\EE & \lvert v_{t}^{(1)}(x)-v_{t}^{(2)}(x)\rvert_{\mathrm{F}}^{2}\\
 & \le\lvert a_{1}-a_{2}\rvert^{2}+\frac{\Lip(\sigma)^{2}}{\log\rho^{-1}}\int_{0}^{\rho^{1-q}}\vvvert v_{s}^{(1)}-v_{s}^{(2)}\vvvert^{2}\iint R^{\rho}(y_{1}-y_{2})\prod_{i=1}^{2}G(x-y_{1})\,\dif y_{1}\,\dif y_{2}\,\dif s\\
\overset{\cref{eq:HKwithkernelat0}}&{=}\lvert a_{1}-a_{2}\rvert^{2}+\frac{\Lip(\sigma)^{2}}{2(d-2)\log\rho^{-1}}\int_{0}^{\rho^{1-q}}\frac{\vvvert v_{s}^{(1)}-v_{s}^{(2)}\vvvert^{2}}{\rho^{1-q}-s+\rho}\,\dif s.
\end{align*}
Then by \cite[Lemma 3.4]{DG22} (with constants changed), we have
\[
\vvvert v_{t}^{(1)}-v_{t}^{(2)}\vvvert^{2}\le C\lvert a_{1}-a_{2}\rvert^{2},
\]
and using this in \cref{eq:Jspaceregstart} we obtain \cref{eq:Lspacereg}.
\end{proof}
The regularity properties of $L_{\rho}$ just established give us
compactness:
\begin{prop}
\label{prop:Lcompactness}For any sequence $\rho_{k}\downarrow0$, there
is a subsequence $\rho_{k_{i}}\downarrow0$ and a continuous function
$L\colon[0,1]\times\RR^{\mathfrak{m}}\to\RR^\fkm\otimes\RR^\fkn$ such
that
\[
\lim_{i\to\infty}L_{\rho_{k_{i}}}|_{[0,1]\times\RR^{\mathfrak{m}}}=L,
\]
uniformly on compact subsets of $[0,1]\times\RR^{\mathfrak{m}}$.
\end{prop}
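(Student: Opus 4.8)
The plan is to run an Arzelà–Ascoli-type argument. The only wrinkle is that \cref{lem:Ltimereg} supplies temporal equicontinuity only up to an additive error $1/\sqrt{\log\rho^{-1}}$ that vanishes as $\rho\downarrow 0$, so instead of quoting Arzelà–Ascoli verbatim I would extract a convergent subsequence by hand, using this \emph{asymptotic} equicontinuity. First I would fix $R\in\NN$, set $K_R=[0,1]\times\overline{B_R}$ with $\overline{B_R}\subset\RR^\fkm$ the closed ball of radius $R$, and combine \cref{lem:Lspacereg,lem:Ltimereg} into a single modulus-of-continuity estimate: for all $(q_1,a_1),(q_2,a_2)\in K_R$ and all $\rho\in(0,C^{-1})$,
\begin{equ}
\lvert L_\rho(q_1,a_1)-L_\rho(q_2,a_2)\rvert_\Frob\le C\Lip(\sigma)\langle R\rangle\Bigl(\lvert a_1-a_2\rvert+\lvert q_1-q_2\rvert^{1/2}+\tfrac1{\sqrt{\log\rho^{-1}}}\Bigr),
\end{equ}
with $C$ independent of $\rho$ and $R$. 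Together with the uniform bound $\lvert L_\rho\rvert_\Frob\le C\langle R\rangle$ on $K_R$ coming from \cref{lem:lem:Lbdd}, this says: for every $\eps>0$ there are $\delta>0$ and $\rho_\star>0$, depending only on $\eps$ and $R$, such that $\lvert L_\rho(p)-L_\rho(p')\rvert_\Frob<\eps$ whenever $\rho<\rho_\star$ and $p,p'\in K_R$ satisfy $\lvert a-a'\rvert+\lvert q-q'\rvert^{1/2}<\delta$.

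Given a sequence $\rho_k\downarrow 0$ — which, discarding finitely many terms, we may assume lies in $(0,C^{-1})$ — I would fix a countable dense set $D\subset[0,1]\times\RR^\fkm$ and use the uniform bound together with a standard diagonal extraction (each $L_{\rho_k}(p)$ lives in the fixed bounded subset $\{\lvert\cdot\rvert_\Frob\le C\langle a\rangle\}$ of the finite-dimensional space $\RR^\fkm\otimes\RR^\fkn$) to pass to a subsequence, still written $\rho_k$, along which $L_{\rho_k}(p)$ converges for every $p\in D$. The step that replaces the usual equicontinuity argument is to show that along this subsequence $(L_{\rho_k})_k$ is uniformly Cauchy on each $K_R$: given $\eps>0$, take $\delta,\rho_\star$ as above for $\eps/3$, cover $K_R$ by finitely many sets $\{d(\cdot,p_l)<\delta/2\}$, $l=1,\dots,L$, with centers $p_l\in D$ (where $d((q,a),(q',a'))=\lvert a-a'\rvert+\lvert q-q'\rvert^{1/2}$), and choose $N$ so large that for $j,k\ge N$ one has $\rho_j,\rho_k<\rho_\star$ and $\lvert L_{\rho_k}(p_l)-L_{\rho_j}(p_l)\rvert_\Frob<\eps/3$ for all $l$; then a three-term triangle inequality through the nearest center yields $\lvert L_{\rho_k}(p)-L_{\rho_j}(p)\rvert_\Frob<\eps$ for every $p\in K_R$ and $j,k\ge N$.

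It follows that $(L_{\rho_k})_k$ converges uniformly on each $K_R$ to a continuous function; these limits agree on overlaps and glue to a continuous $L\colon[0,1]\times\RR^\fkm\to\RR^\fkm\otimes\RR^\fkn$, with $L_{\rho_k}\to L$ uniformly on every compact subset (each such set being contained in some $K_R$), which is the assertion. The only real obstacle is the bookkeeping forced by the asymptotic — rather than exact — temporal equicontinuity, handled by inserting $\rho_\star$ into the Cauchy argument; the rest is routine, and a single diagonal extraction on $D$ suffices because the uniform-Cauchy property on every $K_R$ is a consequence of pointwise convergence on $D$ plus the modulus estimate above.
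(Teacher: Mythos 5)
Your proposal is correct and follows essentially the same route as the paper's (omitted) proof: an Arzelà--Ascoli-type compactness argument that combines the uniform bound of \cref{lem:lem:Lbdd} with the spatial and temporal regularity of \cref{lem:Lspacereg,lem:Ltimereg}, the only wrinkle being the vanishing $1/\sqrt{\log\rho^{-1}}$ error, which you correctly treat as asymptotic equicontinuity via a diagonal extraction and a uniform-Cauchy argument on compact sets. Note only that continuity of the limit $L$ should be justified by passing to the limit $k\to\infty$ in your modulus estimate (since the temporal estimate alone does not certify continuity of each $L_{\rho_k}$ in $q$); this is immediate from what you have and your write-up uses it implicitly.
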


\begin{proof}
The proof is by compactness as in the proof of \cite[Proposition 7.4]{DG22}.
The only difference is that, since we do not assume that $\sigma(0)=0$,
we also need the boundedness result \cref{lem:lem:Lbdd} to apply
Arzelà--Ascoli.
Since the proof is otherwise the same, we omit the details.
\end{proof}

\subsection{Statistics of the averaged fields\label{sec:statistics}}
In this section we relate the approximate decoupling function $L_\rho$ to spatial averages of the solution run on appropriate time scales.
For a random vector $u\in\RR^\fkm$, we use the notation $\Var(u) \coloneqq \EE[u^{\otimes 2}]\coloneqq \EE[uu^\top]$.
\begin{prop}\label{prop:VarofStep}
    For each $T\in (0,\infty)$ and $\lambda<\sqrt{d-2}$, we have a constant $C=C(\sigma,T,\lambda,d)<\infty$ such that, whenever $\Lip(\sigma)<\lambda$, then for any $0\le \tau_1\le\tau_2\le T$, $a\in\RR^\fkm$, and $x\in\RR^d$, we have
    \begin{equation}\label{eq:VarofStep}
    \begin{aligned}
    &\langle a\rangle^{-2}\left\lvert \Var(\clG_{\tau_2-\tau_1}\clU_{0,\tau_1}a(X)) - \frac{L_\rho(1-\log_\rho\tau_1,a)^{\otimes 2}\log_{\rho^{-1}}\left(1+\frac{\tau_1}{2(\tau_2-\tau_1+\rho)}\right)}{2(d-2)}\right\rvert_\Frob
  \\&\ \le C\log_{\rho^{-1}}\frac{\tau_2+\rho}{\tau_2-\frac12\tau_1+\rho}+C\left(\frac1{\sqrt{\log\rho^{-1}}}+\log_{\rho^{-1}}\frac{\tau_1}{\tau_2-\tau_1}+\frac{\sqrt{\tau_2/\tau_1}}{\log\rho^{-1}}\right)\log_{\rho^{-1}}\frac{\tau_2-\frac12\tau_1}{\tau_2-\tau_1}.
    \end{aligned}
\end{equation}
    If in addition we assume that $\tau_2\le2\tau_1$, then in particular we have the inequality
    \begin{equation}\label{eq:varofstep-simple}
        \begin{aligned}
    &\left\lvert \Var(\clG_{\tau_2-\tau_1}\clU_{0,\tau_1}a(X)) - \frac{L_\rho(1-\log_\rho\tau_1,a)^{\otimes 2}\log_{\rho^{-1}}\left(1+\frac{\tau_1}{2(\tau_2-\tau_1+\rho)}\right)}{2(d-2)}\right\rvert_\Frob
          \\& \qquad\le C\langle a\rangle^2\left(\frac1{\log\rho^{-1}}+\left(\log_{\rho^{-1}}\left(1+\frac{\tau_1}{\tau_2-\tau_1}\right)\right)^2\right).
        \end{aligned}
    \end{equation}
\end{prop}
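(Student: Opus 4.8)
The plan is to start from the mild formulation \cref{eq:umild} with the constant initial datum~$a$, which gives the martingale representation
\[
\clG_{\tau_2-\tau_1}\clU_{0,\tau_1}a(X) = a + \frac1{\sqrt{\log\rho^{-1}}}\int_0^{\tau_1}\clG_{\tau_2-s}[\sigma(u_s)\,\dif W_s^\rho](X),\qquad u_s \coloneqq \clU_{0,s}a,
\]
where the stochastic integral is a mean-zero $L^2$-martingale by \cref{prop:momentbd} (applicable since $\Lip(\sigma)<\lambda<\sqrt{d-2}$, so that $u_0\equiv a$ has all the needed moments). Hence the variance equals the second moment of the stochastic integral, which by the noise covariance \cref{eq:noise-covariance} is
\[
\frac1{\log\rho^{-1}}\int_0^{\tau_1}\iint R^\rho(y_1-y_2)\Big(\prod_{i=1}^2 G_{\tau_2-s}(X-y_i)\Big)\EE[\sigma(u_s(y_1))\sigma(u_s(y_2))^\top]\,\dif y_1\,\dif y_2\,\dif s.
\]
Since the initial datum is constant and the noise is spatially homogeneous, $u_s$ is a spatially stationary field, so $\EE[\sigma(u_s(y))]=L_\rho(1-\log_\rho s,a)$ for every $y$; I will split $\EE[\sigma(u_s(y_1))\sigma(u_s(y_2))^\top]=L_\rho(1-\log_\rho s,a)^{\otimes 2}+\Cov(\sigma(u_s(y_1)),\sigma(u_s(y_2)))$ and treat the two resulting contributions separately.

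For the mean-product contribution, the spatial integral is evaluated exactly by \cref{eq:applyHKwithkernelat0}, leaving $\frac1{2(d-2)\log\rho^{-1}}\int_0^{\tau_1}\frac{L_\rho(1-\log_\rho s,a)^{\otimes 2}}{\tau_2-s+\rho}\,\dif s$, which I will split at $s=\tau_1/2$. On $[0,\tau_1/2]$ I bound $\lvert L_\rho\rvert_\Frob\le C\langle a\rangle$ by \cref{lem:lem:Lbdd} and integrate, producing the first error term $C\langle a\rangle^2\log_{\rho^{-1}}\frac{\tau_2+\rho}{\tau_2-\tau_1/2+\rho}$. On $[\tau_1/2,\tau_1]$, where $\lvert\log_\rho s-\log_\rho\tau_1\rvert\le(\log 2)/\log\rho^{-1}$, I will replace $L_\rho(1-\log_\rho s,a)$ by $L_\rho(1-\log_\rho\tau_1,a)$ using the time regularity \cref{lem:Ltimereg} together with $\lvert A^{\otimes 2}-B^{\otimes 2}\rvert_\Frob\le\lvert A-B\rvert_\Frob(\lvert A\rvert_\Frob+\lvert B\rvert_\Frob)$; the replacement error is $\frac{C\langle a\rangle^2}{\sqrt{\log\rho^{-1}}}\log_{\rho^{-1}}\frac{\tau_2-\tau_1/2}{\tau_2-\tau_1}$, and the leftover $\frac{L_\rho(1-\log_\rho\tau_1,a)^{\otimes 2}}{2(d-2)\log\rho^{-1}}\int_{\tau_1/2}^{\tau_1}\frac{\dif s}{\tau_2-s+\rho}$, after the elementary computation $\int_{\tau_1/2}^{\tau_1}\frac{\dif s}{\tau_2-s+\rho}=\log(1+\frac{\tau_1}{2(\tau_2-\tau_1+\rho)})$, is exactly the claimed main term.

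The covariance contribution I will bound by applying \cref{prop:covbd} entrywise to $\sigma$ (its proof in fact bounds $\Cov(f(u_s(y_1)),g(u_s(y_2)))$ for any scalar Lipschitz $f,g$), which gives $\lvert\Cov(\sigma(u_s(y_1)),\sigma(u_s(y_2)))\rvert_\Frob\le\frac{C\langle a\rangle^2}{\log\rho^{-1}}\big(\log\frac{s^{1/2}}{\lvert y_1-y_2\rvert}+1+\frac{\lvert y_1-y_2\rvert}{s^{1/2}}\big)$. Substituting, changing variables to $w=\tfrac12(y_1+y_2)$, $z=y_1-y_2$, using $G_{\tau_2-s}(X-y_1)G_{\tau_2-s}(X-y_2)=G_{2(\tau_2-s)}(z)G_{(\tau_2-s)/2}(X-w)$ and integrating out $w$, this contribution is at most
\[
\frac{C\langle a\rangle^2}{(\log\rho^{-1})^2}\int_0^{\tau_1}\int R^\rho(z)G_{2(\tau_2-s)}(z)\Big(\log\frac{s^{1/2}}{\lvert z\rvert}+1+\frac{\lvert z\rvert}{s^{1/2}}\Big)\,\dif z\,\dif s.
\]
I will reduce the inner $z$-integral to an expression in $\tau_2-s$ and $\rho$ using the Gaussian decomposition \cref{eq:decomposeRrho}, the identity \cref{eq:HKwithkernelat0}, the computation $(G_t*\log\lvert\cdot\rvert^{-1})(0)=-\tfrac12\log t+O(1)$ from \cref{eq:f0}, and $\int G_t(z)\lvert z\rvert\,\dif z=O(t^{1/2})$; morally this replaces $\lvert z\rvert$ by the scale $(\tau_2-s)^{1/2}$, yielding a bound of the form $\frac{C\langle a\rangle^2}{(\log\rho^{-1})^2}\int_0^{\tau_1}\frac1{\tau_2-s+\rho}\big((\log\tfrac{s}{\tau_2-s})_+ +1+\sqrt{\tfrac{\tau_2-s}{s}}\big)\,\dif s$. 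Splitting this time integral at $\tau_1/2$ (using $\sqrt{(\tau_2-s)/s}\le\sqrt{2\tau_2/\tau_1}$ and $\log\tfrac{s}{\tau_2-s}\le\log\tfrac{\tau_1}{\tau_2-\tau_1}$ on the upper piece, and an explicit estimate on the lower piece) gives the remaining error terms $C\langle a\rangle^2\big(\log_{\rho^{-1}}\tfrac{\tau_1}{\tau_2-\tau_1}+\tfrac{\sqrt{\tau_2/\tau_1}}{\log\rho^{-1}}\big)\log_{\rho^{-1}}\tfrac{\tau_2-\tau_1/2}{\tau_2-\tau_1}$. I expect this last step --- extracting the logarithmic $z$-dependence of $R^\rho$ averaged against $G_{2(\tau_2-s)}$ and then integrating cleanly in $s$, with separate care for the regime $\tau_2-s\lesssim\rho$ where the decorrelation is trivial --- to be the main technical obstacle.

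Finally, \cref{eq:varofstep-simple} will follow from \cref{eq:VarofStep} by elementary manipulations under $\tau_2\le 2\tau_1$: then $\sqrt{\tau_2/\tau_1}\le\sqrt2$; $\log_{\rho^{-1}}\tfrac{\tau_2+\rho}{\tau_2-\tau_1/2+\rho}\le\tfrac{\log2}{\log\rho^{-1}}$ since $\tau_2-\tau_1/2\ge\tau_2/2$; both $\log\tfrac{\tau_1}{\tau_2-\tau_1}$ and $\log\tfrac{\tau_2-\tau_1/2}{\tau_2-\tau_1}$ are bounded by $m\coloneqq\log(1+\tfrac{\tau_1}{\tau_2-\tau_1})$; and the cross terms are absorbed via $m\sqrt{\log\rho^{-1}}\le\tfrac12(m^2+\log\rho^{-1})$ and $\tfrac{m}{(\log\rho^{-1})^2}\le\tfrac1{\log\rho^{-1}}+\tfrac{m^2}{(\log\rho^{-1})^2}$, collapsing everything into $C\langle a\rangle^2\big(\tfrac1{\log\rho^{-1}}+(\log_{\rho^{-1}}(1+\tfrac{\tau_1}{\tau_2-\tau_1}))^2\big)$.
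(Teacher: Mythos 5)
Your proposal is correct and follows essentially the same route as the paper's proof: the same martingale/variance representation of $\Var(\clG_{\tau_2-\tau_1}\clU_{0,\tau_1}a(X))$, the same split into a mean-product term (evaluated via \cref{eq:applyHKwithkernelat0} and frozen at $s=\tau_1$ using \cref{lem:lem:Lbdd,lem:Ltimereg}) and a covariance term controlled by \cref{prop:covbd} with \cref{eq:Rrhobound} after the centre-of-mass change of variables, and the same elementary reduction to \cref{eq:varofstep-simple}. The only difference is organizational: the paper discards the entire integrand on the initial layer $[0,\frac12\tau_1]$ with the crude moment bound before splitting, whereas you split everywhere and estimate the covariance piece on $[0,\frac12\tau_1]$ separately, which still lands within the stated error terms.
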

\begin{proof}
    Let $u_s = \clU_{0,s}a$.
    We have 
    \begin{equation}
        \begin{aligned}
          \Var(\clG_{\tau_2-\tau_1}u_{\tau_1}(X))&=\Var\left(\frac1{\sqrt{\log\rho^{-1}}}\int_0^{\tau_1}\int G_{\tau_2-s}(X-y)\sigma(u_s(y))\,\dif W^\rho_s(y)\right)\\&=\int_0^{\tau_1} A(s)\,\dif s,
\end{aligned}\label{eq:vardecomp}
    \end{equation}
    where we have defined
\begin{align*}
    A(s)&\coloneqq \frac1{\log\rho^{-1}}\iint R^\rho(y_1-y_2)\\&\qquad\qquad\quad\times\EE\left[\left(G_{\tau_2-s}(X-y_1)\sigma(u_s(y_1))\right)\left(G_{\tau_2-s}(X-y_2)\sigma(u_s(y_2))\right)^\top\right]\,\dif y_1\,\dif y_2.
    \end{align*}
    We now estimate the integral \cref{eq:vardecomp} in several steps.
    \begin{thmstepnv}
        \item \emph{The initial layer.} First we deal with the first half of the time interval $[0,\tau_1]$, whose contribution we will think of as an error. We have
            \begin{align}
                &\left\lvert \frac1{\log\rho^{-1}}\int_0^{\frac12 \tau_1}A(s)\,\dif s\right\rvert_{\Frob}  \overset{\cref{eq:momentbd}}\le\frac{C\langle a\rangle^2}{\log\rho^{-1}}\int_0^{\frac12\tau_1}\iint R^\rho(y_1-y_2)\prod_{i=1}^2 G_{\tau_2-s}(X-y_i)\,\dif y_1\,\dif y_2\,\dif s\notag\\
                &\qquad\overset{\cref{eq:HKwithkernelat0}}=\frac{C\langle a\rangle^2}{\log\rho^{-1}}\int_0^{\frac12\tau_1}\frac{\dif s}{\tau_2-s+\rho}=C\langle a\rangle^2\log_{\rho^{-1}}\frac{\tau_2+\rho}{\tau_2-\frac12\tau_1+\rho}.\label{eq:firsthalf}
            \end{align}
        \item \emph{Long-range and short-range terms.} We define, for $s\in [\frac12\tau_1,\tau_1]$,
            \begin{align}
                A_\EE(s)&\coloneqq \frac1{\log\rho^{-1}}\iint R^\rho(y_1-y_2)\left(G_{\tau_2-s}(X-y_1)\EE[\sigma(u_s(y_1))]\right)\\&\qquad\qquad\qquad\times\left(G_{\tau_2-s}(X-y_2)\EE[\sigma(u_s(y_2))]^\top\right)\,\dif y_1\,\dif y_2\notag\\
                \overset{\cref{eq:HKwithkernelat0}}&=\frac{\EE[\sigma(u_s(X))]^{\otimes 2}}{2(d-2)(\tau_2-s+\rho)\log\rho^{-1}}\overset{\cref{eq:Lrhodef}}=\frac{L_\rho(1-\log_\rho s,a)^{\otimes 2}}{2(d-2)(\tau_2-s+\rho)\log\rho^{-1}}\label{eq:AEs}
            \end{align}
            and
            \begin{equation}
                A_{\clC}(s)\coloneqq\frac1{\log\rho^{-1}}\iint R^\rho(y_1-y_2)\Cov(\sigma(u_s(y_1)),\sigma(u_s(y_2)))\prod_{i=1}^2G_{\tau_2-s}(X-y_i)\,\dif y_1\,\dif y_2 .
            \end{equation}
            Here, we use the notation
            $\Cov(u,v) \coloneqq \EE[uv^\top] - \EE[u]\EE[v]^\top$.
            From these definitions we observe that
            \begin{equation}
                A(s)=A_\EE(s)+A_\clC(s).\label{eq:Adecompose}
            \end{equation}
\item \emph{Estimate on $A_{\EE}(s)$.} The main contribution comes from
$A_{\EE}(s)$. From \cref{eq:AEs}, we estimate, for all $s\in[\frac{1}{2}\tau_{1},\tau_{1}]$,
that
\begin{align}
    \hspace{2em}&\hspace{-2em}\left\lvert A_{\EE}(s)(\tau_{2}-s+\rho)\log\rho^{-1}-\frac{L_{\rho}(1-\log_{\rho}\tau_{1},a)^{\otimes 2}}{2(d-2)}\right\rvert_\Frob\notag
              \\&\le C\lvert L_{\rho}(1-\log_{\rho}s,a)^{\otimes 2}-L_{\rho}(1-\log_{\rho}\tau_{1},a)^{\otimes 2}\rvert_\Frob\notag\\
 \overset{\cref{eq:decouplingbdd}}&{\le}C\langle a\rangle\lvert L_{\rho}(1-\log_{\rho}s,a)-L_{\rho}(1-\log_{\rho}\tau_{1},a)\rvert_\Frob\notag\\\overset{\cref{eq:Ltimereg}}&{\le}C\langle a\rangle^2\left(\sqrt{\log_{\rho^{-1}}\frac{\tau_{1}}{s}}+\frac{1}{\sqrt{\log\rho^{-1}}}\right)\le\frac{C\langle a\rangle^2}{\sqrt{\log\rho^{-1}}},
\label{eq:AELp}
\end{align}
with the last inequality by the assumption that $s\ge\frac{1}{2}\tau_{1}$.
This means that
\begin{align}
    \hspace{2em}&\hspace{-2em} \left\lvert\int_{\frac{1}{2}\tau_{1}}^{\tau_{1}}A_{\EE}(s)\,\dif s-\frac{L_{\rho}(1-\log_{\rho}\tau_{1},a)^{\otimes 2}}{2(d-2)}\log_{\rho^{-1}}\frac{\tau_{2}-\frac{1}{2}\tau_{1}+\rho}{\tau_{2}-\tau_{1}+\rho}\right\rvert_\Frob\nonumber \\
 &\le\frac{1}{\log\rho^{-1}}\int_{\frac{1}{2}\tau_{1}}^{\tau_{1}}\frac{\left\lvert A_{\EE}(s)(\tau_{2}-s+\rho)\log\rho^{-1}-\frac{L_{\rho}(1-\log_{\rho}\tau_{1},a)L_\rho(1-\log_\rho\tau_1,a)^\top}{2(d-2)}\right\rvert_\Frob}{\tau_{2}-s+\rho}\,\dif s\nonumber \\
    \overset{\cref{eq:AELp}}&{\le}\frac{C\langle a\rangle^2}{(\log\rho^{-1})^{3/2}}\int_{\frac{1}{2}\tau_{1}}^{\tau_{1}}\frac{\dif s}{\tau_{2}-s+\rho}\le\frac{C\langle a\rangle ^2}{(\log\rho^{-1})^{3/2}}\int_{\frac{1}{2}\tau_{1}}^{\tau_{1}}\frac{\dif s}{\tau_{2}-s}\notag\\&=\frac{C\langle a\rangle^2}{\sqrt{\log\rho^{-1}}}\log_{\rho^{-1}}\frac{\tau_{2}-\frac{1}{2}\tau_{1}}{\tau_{2}-\tau_{1}}.\label{eq:boundAEterm}
\end{align}
\item \emph{Estimate on $A_{\clC}(s)$.} The contribution of $A_{\clC}(s)$
is also an error term. We can use \cref{prop:covbd} to estimate,
for $s\in[\frac{1}{2}\tau_{1},\tau_{1}]$,
\begin{align}
    \hspace{0.1em}&\hspace{-0.1em} \lvert A_{\clC}(s)\rvert_{\mathrm{F}}\notag\le\frac{C\Lip(\sigma)^{2}\langle a\rangle^2}{(\log\rho^{-1})^{2}}\iint R^{\rho}(y_{1}-y_{2})\left(\log\frac{s^{1/2}}{\lvert y_{1}-y_{2}\rvert}+1+\frac{\lvert y_{1}-y_{2}\rvert}{s^{1/2}}\right)\\&\qquad\qquad\qquad\qquad\qquad\qquad\qquad\qquad\cdot\prod_{i=1}^{2}G_{\tau_{2}-s}(X-y_{i})\,\dif y_{1}\,\dif y_{2}\nonumber \\
    \overset{\cref{eq:Rrhobound}}&{\le}\frac{C\Lip(\sigma)^{2}\langle a\rangle^2}{(\log\rho^{-1})^{2}}\int\lvert y\rvert^{-2}\left(\log\frac{s^{1/2}}{\lvert y\rvert}+1+\frac{\lvert y\rvert}{s^{1/2}}\right)G_{2(\tau_{2}-s)}(y)\,\dif y\nonumber \\
 & =\frac{C\Lip(\sigma)^{2}\langle a\rangle^2}{(\tau_{2}-s)(\log\rho^{-1})^{2}}\int R^{\rho}(y)\left(\log\frac{s^{1/2}}{\lvert y\rvert(\tau_{2}-s)^{1/2}}+1+\frac{(\tau_{2}-s)^{1/2}\lvert y\rvert}{s^{1/2}}\right)G_{2}(y)\,\dif y\nonumber \\
 & \le\frac{C\Lip(\sigma)^{2}\langle a\rangle^2}{(\tau_{2}-s)(\log\rho^{-1})^{2}}\left(1+\log\frac{\tau_{1}}{\tau_{2}-\tau_{1}}+(\tau_{2}/\tau_{1})^{1/2}\right).\label{eq:applycovbd-1}
\end{align}
 This means that (folding $\Lip(\sigma)$ now into $C$)
\begin{equation}
\int_{\frac{1}{2}\tau_{1}}^{\tau_{1}}A_{\clC}(s)\,\dif s\le\frac{C\langle a\rangle}{\log\rho^{-1}}\left(1+\log\frac{\tau_{1}}{\tau_{2}-\tau_{1}}+(\tau_{2}/\tau_{1})^{1/2}\right)\log_{\rho^{-1}}\frac{\tau_{2}-\frac{1}{2}\tau_{1}}{\tau_{2}-\tau_{1}}.\label{eq:ACovbd-1}
\end{equation}
\item \emph{Combining the estimates.} Combining \cref{eq:vardecomp,eq:firsthalf,eq:Adecompose,eq:boundAEterm,eq:ACovbd-1}, and using the fact that
\[
\log_{\rho^{-1}}\frac{\tau_{2}-\frac{1}{2}\tau_{1}+\rho}{\tau_{2}-\tau_{1}+\rho}=\log_{\rho^{-1}}\left(1+\frac{1}{2}\cdot\frac{\tau_{1}}{\tau_{2}-\tau_{1}+\rho}\right),
\]
we obtain \cref{eq:VarofStep}. The bound \cref{eq:varofstep-simple} is then a simple derivation from \cref{eq:VarofStep} under the additional assumption $\tau_2\le 2\tau_1$. \qedhere
\end{thmstepnv}
\end{proof}
We also need a higher moment bound.
\begin{prop}
\label{prop:highermomentbd-1}Let $T>0$. There is an $\ell_{0}>2$
and a $C=C(\sigma,T)<\infty$ such that, for all $0\le\tau_{1}\le\tau_{2}\le T$
and all $\ell\in[2,\ell_{0}]$, we have
\begin{equation}
    \EE\lvert\mathcal{G}_{\tau_{2}-\tau_{1}}\mathcal{U}_{0,\tau_{1}}a(X)-a\rvert^{\ell}\le C\langle a\rangle^{\ell}\left(\log_{\rho^{-1}}\frac{\tau_{2}+\rho}{\tau_{2}-\tau_{1}+\rho}\right)^{\ell/2}.\label{eq:justdealwiththelog-1-1}
\end{equation}
\end{prop}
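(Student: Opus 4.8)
The plan is to write the quantity as the terminal value of a matrix-valued martingale and bound its moments via the Burkholder--Davis--Gundy (BDG) inequality, using the moment bound \cref{prop:momentbd} as an a priori input. First I would insert the constant (hence deterministic) initial datum $u_0\equiv a$ into the mild formulation \cref{eq:umild}; since $\clG_{\tau_1}a=a$ and $\clG_{\tau_2-\tau_1}\clG_{\tau_1-s}=\clG_{\tau_2-s}$, a stochastic Fubini argument gives
\[
    \clG_{\tau_2-\tau_1}\clU_{0,\tau_1}a(X)-a=M_{\tau_1},\qquad
    M_t\coloneqq\frac1{\sqrt{\log\rho^{-1}}}\int_0^t\int G_{\tau_2-s}(X-y)\,\sigma(u_s(y))\,\dif W^\rho_s(y),
\]
where $u_s=\clU_{0,s}a$ and $(M_t)_{t\in[0,\tau_1]}$ is a (matrix-valued) martingale. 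Its differential quadratic variation is the matrix $A(s)$ of \cref{eq:UQVAdef}, with $T-t$ there replaced by $\tau_2-s$ (and $x$ by $X$), so that, by submultiplicativity of the Frobenius norm and the Lipschitz bound $\lvert\sigma(u)\rvert_\Frob\le C\langle u\rangle$,
\[
    \tr A(s)\le\frac{C}{\log\rho^{-1}}\iint R^\rho(y_1-y_2)\prod_{i=1}^2 G_{\tau_2-s}(X-y_i)\langle u_s(y_i)\rangle\,\dif y_1\,\dif y_2.
\]

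Next I would take $L^{\ell/2}$ norms in $\omega$. Minkowski's integral inequality in the variables $y_1,y_2$, together with the Cauchy--Schwarz bound $\|\langle u_s(y_1)\rangle\langle u_s(y_2)\rangle\|_{L^{\ell/2}}\le\|\langle u_s(y_1)\rangle\|_{L^\ell}\|\langle u_s(y_2)\rangle\|_{L^\ell}$ and the moment bound \cref{prop:momentbd} applied with initial datum $a$ — which supplies an $\ell_0>2$ with $\vvvert u_s\vvvert_\ell\le C\langle a\rangle$ for $\ell\in[2,\ell_0]$ and $\rho$ small — yields, after invoking \cref{eq:applyHKwithkernelat0},
\[
    \bigl\|\tr A(s)\bigr\|_{L^{\ell/2}}\le\frac{C\langle a\rangle^2}{2(d-2)\log\rho^{-1}\,(\tau_2-s+\rho)}.
\]
Since $\int_0^{\tau_1}(\tau_2-s+\rho)^{-1}\,\dif s=\log\frac{\tau_2+\rho}{\tau_2-\tau_1+\rho}<\infty$ (even when $\tau_1=\tau_2$), one more application of Minkowski's integral inequality over $s$ gives
\[
    \bigl\|\tr[M]_{\tau_1}\bigr\|_{L^{\ell/2}}\le\int_0^{\tau_1}\|\tr A(s)\|_{L^{\ell/2}}\,\dif s\le C\langle a\rangle^2\log_{\rho^{-1}}\frac{\tau_2+\rho}{\tau_2-\tau_1+\rho}.
\]

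Finally, the vector-valued BDG inequality gives $\EE\lvert M_{\tau_1}\rvert^\ell\le C_\ell\,\EE\bigl(\tr[M]_{\tau_1}\bigr)^{\ell/2}=C_\ell\bigl\|\tr[M]_{\tau_1}\bigr\|_{L^{\ell/2}}^{\ell/2}$; combining with the previous display and raising to the power $\ell/2$ yields \cref{eq:justdealwiththelog-1-1}. I expect the only delicate points to be bookkeeping: verifying that $M$ is a genuine (not merely local) martingale, which follows once the finiteness of $\EE\,\tr[M]_{\tau_1}$ is established as above, and propagating the range $\ell\in[2,\ell_0]$ inherited from \cref{prop:momentbd}. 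As an alternative to the BDG step one could, mirroring the proof of \cref{prop:momentbd} more closely, apply Itô's formula to $\lvert M_t\rvert^\ell$ and run a Grönwall-type argument; since the a priori moment bound is already available there is no self-referential estimate to close, so either route works, but the BDG argument is shorter.
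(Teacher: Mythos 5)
Your proposal is correct and follows essentially the same route as the paper: both identify $\clG_{\tau_2-t}\clU_{0,t}a(X)-a$ as a martingale in $t$, apply the (vector-valued) BDG inequality, and then invoke \cref{prop:momentbd} together with \cref{eq:applyHKwithkernelat0} to close the estimate. The only cosmetic difference is in bounding $\bigl\|\tr[M]_{\tau_1}\bigr\|_{L^{\ell/2}}$: you use Minkowski's integral inequality followed by Cauchy--Schwarz on $\langle u_s(y_1)\rangle\langle u_s(y_2)\rangle$, whereas the paper applies Jensen's inequality to pull out a power of $\int_0^{\tau_1}(G_{2(\tau_2-s)}*R^\rho)(0)\,\dif s$; both manipulations are elementary and yield the same bound.
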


\begin{proof}
The proof is quite similar to that of \cite[Proposition 7.8]{DG22}.
As in the proof of \cref{prop:momentbd}, we note that
\begin{equation}
Z_{t}\coloneqq\mathcal{G}_{\tau_{2}-t}\mathcal{U}_{0,t}a(X)-a\label{eq:Ztdef}
\end{equation}
is a martingale in $t$. By the BDG inequality (see e.g.\ \cite[Proposition 4.4]{Kho14}),
we have a constant $C_{\ell}<\infty$ such that
\begin{equation}
\mathbb{E}\lvert Z_{t}\rvert^{\ell}\le C_{\ell}\mathbb{E}\lvert [Z]_{t}\rvert^{\ell/2}.\label{eq:BDG}
\end{equation}
Let $u_{t}=\clU_{0,t}a$. Just as in \cref{eq:UQVAdef},
we have
\begin{align*}
[Z]_{t} & =\frac{1}{\log\rho^{-1}}\int_{0}^{t}\iint R^{\rho}(y_{1}-y_{2})\prod_{i=1}^{2}\left(G_{\tau_{2}-s}(X-y_{i})\sigma(u_{s}(y_{i}))\right)\,\dif y_{1}\,\dif y_{2}\,\dif s.
\end{align*}
Therefore, we have by Jensen's inequality that 
\begin{align}
 & \lvert [Z]_{t}\rvert^{\ell/2}\le\left(\frac{1}{\log\rho^{-1}}\int_{0}^{t}\iint R^{\rho}(y_{1}-y_{2})\prod_{i=1}^{2}\left(G_{\tau_{2}-s}(x-y_{i})\lvert\sigma(u_{s}(y_{i}))\rvert\right)\,\dif y_{1}\,\dif y_{2}\,\dif s\right)^{\ell/2}\nonumber \\
 & \quad\le\frac{\left(\int_{t_{m-1}'}^{t}(G_{2(\tau_{2}-s)}*R^{\rho})(0)\,\dif s\right)^{\ell/2-1}}{\left(\log\rho^{-1}\right)^{\ell/2}}
 \nonumber \\& \quad\qquad\cdot
 \int_{0}^{t}\iint R^{\rho}(y_{1}-y_{2})\prod_{i=1}^{2}\left(G_{\tau_{2}-s}(x-y_{i})\lvert \sigma(u_{s}(y_{i}))\rvert^{\ell/2}\right)\,\dif y_{1}\,\dif y_{2}\,\dif s.\label{eq:Jensen-1}
\end{align}
We have
\begin{equation}
\int_{0}^{t}(G_{2(\tau_{2}-s)}*R^{\rho})(0)\,\dif s\overset{\cref{eq:HKwithkernelat0}}{=}\frac{1}{2(d-2)}\int_{0}^{t}\frac{1}{\tau_{2}-s+\rho}\,\dif s=\frac{\log\frac{\tau_{2}+\rho}{\tau_{2}-t+\rho}}{2(d-2)},\label{eq:firstlogterm-1}
\end{equation}
and also
\begin{align}
\mathbb{E} & \left[\int_{0}^{t}\iint R^{\rho}(y_{1}-y_{2})\prod_{i=1}^{2}\left(G_{\tau_{2}-s}(x-y_{i})\lvert \sigma(u_{s}(y_{i}))\rvert^{\ell/2}\right)\,\dif y_{1}\,\dif y_{2}\,\dif s\right]\nonumber \\
 & \le C\int_{0}^{t}\langle\vvvert u_{s}\vvvert_{\ell}\rangle^{\ell}\iint R^{\rho}(y_{1}-y_{2})\prod_{i=1}^{2}G_{\tau_{2}-s}(x-y_{i})\,\dif y_{1}\,\dif y_{2}\,\dif s\nonumber \\
 & \le C\langle a\rangle^{\ell}\log\frac{\tau_{2}+\rho}{\tau_{2}-t+\rho},\label{eq:secondlogterm-1}
\end{align}
with the second inequality by \cref{prop:momentbd,eq:HKwithkernelat0}.
Using \cref{eq:firstlogterm-1,eq:secondlogterm-1}
in \cref{eq:Jensen-1}, we get
\[
\mathbb{E}\lvert [Z]_{t}\rvert^{\ell/2}\le C\langle a\rangle^{\ell}\left(\frac{\log\frac{\tau_{2}+\rho}{\tau_{2}-t+\rho}}{\log\rho^{-1}}\right)^{\ell/2}.
\]
Taking $t=\tau_{1}$ and recalling \cref{eq:Ztdef,eq:BDG},
we obtain \cref{eq:justdealwiththelog-1-1}.
\end{proof}

\section{Analysis of the Markov chain\label{sec:analysis-markov-chain}}
\subsection{The time scales and definition of the Markov chain}\label{sec:timescalesMC}
Now we define a time discretization as in \cite{DG22}. Throughout this section, we fix $T\in [0,T_0]$ and $X\in\RR^d$. We define the small parameters $\delta_\rho,\gamma_\rho,\eta_\rho$ satisfying the conditions
\begin{align}
    (\log\rho^{-1})^{-1}&\ll \gamma_\rho\ll \delta_\rho^2 \ll \eta_\rho \ll 1,\label{eq:gammadeltacond}\\
    \delta_\rho^{-1}\rho^{\frac12 \gamma_\rho}&\ll 1.\label{eq:gammarhocond}
\end{align}
Here we write $f(\rho)\ll g(\rho)$ to mean that $\lim\limits_{\rho\downarrow 0} \frac{f(\rho)}{g(\rho)}=0$. This means in particular that
\begin{equation}
    \rho^{\delta_\rho}\ll\rho^{\gamma_\rho}\ll 1.\label{eq:rhodeltarhorhogammarholl1}
\end{equation}
Now we define
\begin{align}
    s_m &\coloneqq \rho^{m\delta_\rho};& s_m'&\coloneqq \rho^{m\delta_\rho+\gamma_\rho};\label{eq:smdef}\\
    t_m &\coloneqq T-s_m;&t_m'&\coloneqq T-s_m'.\label{eq:tmdef}
\end{align}
    It follows from \cref{eq:gammadeltacond} that
    \begin{equation}
        \label{eq:tmsll}
        t_{m+1}-t_m'\ll t_m'-t_m\ll t_m-t_{m-1}'
    \end{equation}
    for each $m$. We also define
    \begin{align}
        M_1(\rho,T)&\coloneqq \lceil \delta_\rho^{-1}\log_\rho T\rceil -1,\label{eq:M1rho}\\
        M_2(\rho)&\coloneqq \lfloor \delta_r\rho^{-1}\rfloor.\label{eq:M2rho}
    \end{align}
    
    We now define a process that solves the SPDE \cref{eq:ueqn}, but with the noise ``turned off'' on each time interval $[t_m,t_m']$. Specifically, we define
    \begin{equation}\label{eq:wic}
        w_t^{(M_1(\rho,T))} \coloneqq \clG_tu_0
    \end{equation}
    and, for $m\ge M_1(\rho,T)+1$,
    \begin{equation}\label{eq:Ymdef}
        Y_m\coloneqq \clG_{t_m'-t_m} w_{t_m}^{(m-1)}(X)
    \end{equation}
    and
    \begin{equation}\label{eq:wmdef}
    w_t^{(m)}\coloneqq\clU_{t_m',t}Y_m=\clU_{t_m',t}\clZ_X\clG_{t_m'-t_m} w_{t_m}^{(m-1)}\qquad\text{for }t\ge t_{m'}.
    \end{equation}
    In particular, this means that%
    \begin{equation}
        Y_m = \clG_{t_m'-t_m} \clU_{t'_{m-1},t_m}Y_{m-1}(X).\label{eq:Ymrecurrence}
    \end{equation}
    
    We need a uniform second moment bound, similar to \cite[Lemma~6.3]{DG22}:
    \begin{prop}\label{prop:uniformsecondmomentbound}
        For any $T>0$, we have a constant $C=C(T)<\infty$, independent of $\rho$, such that 
        \begin{equation}
            \sup_{m\in [M_1(\rho,T),M_2(\rho)]} \EE\lvert Y_m\rvert^2 \le C\langle \vvvert u_0\vvvert\rangle^2.\label{eq:Ymbound}
        \end{equation}
        and
        \begin{equation}
            \sup_{\substack{m\in [M_1(\rho,T),M_2(\rho)]\\ t\in [t_m',T]}} \EE\vvvert w^{(m)}_t\vvvert^2 \le C\langle \vvvert u_0\vvvert\rangle^2.\label{eq:wmbound}
        \end{equation}
    \end{prop}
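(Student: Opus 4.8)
The plan is to exploit the fact that, by construction, the sequence $(Y_m)$ (defined for $m\ge M_1(\rho,T)+1$) is a martingale with respect to $(\clF_{t_m'})_m$: once this is known, the $L^2$-orthogonality of the increments turns a per-step bound of order $\delta_\rho$ into a bound on the whole chain, which has only $\sim\delta_\rho^{-1}$ steps. This orthogonality is essential: a naive triangle-inequality estimate gives per-step increments of order $\delta_\rho^{1/2}$, whose product over $\sim\delta_\rho^{-1}$ steps would blow up like $e^{C\delta_\rho^{-1/2}}$.

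\emph{Martingale property and the per-step estimate.} By \cref{eq:Ymrecurrence}, $Y_m=\clG_{t_m'-t_m}\clU_{t_{m-1}',t_m}Y_{m-1}(X)$, with $Y_{m-1}$ read as the constant field with that value. For $t\in[t_{m-1}',t_m]$ set $V_t\coloneqq\clG_{t_m'-t}\clU_{t_{m-1}',t}Y_{m-1}(X)$; just as for the martingale appearing in the proofs of \cref{prop:momentbd,prop:highermomentbd-1}, $(V_t)$ is a martingale (for fixed $\rho$ the relevant moments are finite). Since the heat flow fixes constant fields, $V_{t_{m-1}'}=Y_{m-1}$ and $V_{t_m}=Y_m$, so $\EE[Y_m\mid\clF_{t_{m-1}'}]=Y_{m-1}$ and hence $\EE\lvert Y_m\rvert^2=\EE\lvert Y_{m-1}\rvert^2+\EE\lvert Y_m-Y_{m-1}\rvert^2$. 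Conditioning on $\clF_{t_{m-1}'}$ and applying \cref{prop:highermomentbd-1} with $\ell=2$, time-shifted so that the initial time is $t_{m-1}'$ (with $a=Y_{m-1}$, $\tau_1=t_m-t_{m-1}'$, $\tau_2=t_m'-t_{m-1}'\le T$), gives
\[
\EE\bigl[\lvert Y_m-Y_{m-1}\rvert^2\bigm|\clF_{t_{m-1}'}\bigr]\le C(T)\langle Y_{m-1}\rangle^2\,\log_{\rho^{-1}}\frac{t_m'-t_{m-1}'+\rho}{t_m'-t_m+\rho}.
\]
By \cref{eq:smdef,eq:tmdef} the numerator here is $s_{m-1}'-s_m'+\rho$ and the denominator $s_m-s_m'+\rho$; using $s_m'=\rho^{\gamma_\rho}s_m\ll s_m$, $s_m'\ll s_{m-1}'$, and $\rho\le s_m\le1$ (the latter since $m\delta_\rho\le1$ for $m\le M_2(\rho)$) — all consequences of \cref{eq:gammadeltacond} — one checks that this ratio is at most a constant times $\max(s_{m-1}',\rho)/s_m$, whence
\[
\log_{\rho^{-1}}\frac{t_m'-t_{m-1}'+\rho}{t_m'-t_m+\rho}\le\delta_\rho+O\bigl((\log\rho^{-1})^{-1}\bigr)\le2\delta_\rho
\]
uniformly in the relevant range of $m$, once $\rho$ is small.

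\emph{Iteration and conclusion.} Taking expectations and writing $b_m\coloneqq\EE\lvert Y_m\rvert^2$, the last two displays yield $b_m+1\le(1+C(T)\delta_\rho)(b_{m-1}+1)$ for $\rho$ small. The base case $Y_{M_1(\rho,T)+1}=\clG_{t_{M_1(\rho,T)+1}'}u_0(X)$ carries no noise (see \cref{eq:wic,eq:Ymdef}), so $b_{M_1(\rho,T)+1}\le\vvvert u_0\vvvert^2$ by Jensen's inequality. Since $M_2(\rho)-M_1(\rho,T)\le C(T)\delta_\rho^{-1}$ (using $\log_\rho T\to0$), iterating gives $b_m+1\le e^{C'(T)}(b_{M_1(\rho,T)+1}+1)\le e^{C'(T)}\langle\vvvert u_0\vvvert\rangle^2$, which is \cref{eq:Ymbound}. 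For \cref{eq:wmbound}, note that $w_t^{(m)}=\clU_{t_m',t}Y_m$ evolves the constant field $Y_m$ over a time interval of length $t-t_m'\le T$, so conditioning on $\clF_{t_m'}$ and applying \cref{prop:momentbd} (time-shifted) gives $\EE[\lvert w_t^{(m)}(x)\rvert^2\mid\clF_{t_m'}]\le C(T)\langle Y_m\rangle^2$; taking expectations, then $\sup_x$, and invoking \cref{eq:Ymbound} yields $\vvvert w_t^{(m)}\vvvert^2\le C(T)(1+\EE\lvert Y_m\rvert^2)\le C(T)\langle\vvvert u_0\vvvert\rangle^2$.

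\emph{Main obstacle.} The one genuinely non-routine point is recognizing that the martingale structure must be used: it is the $L^2$-orthogonality of the increments that replaces the $\delta_\rho^{1/2}$ of a triangle-inequality estimate by $\delta_\rho$, so that the product over the $\sim\delta_\rho^{-1}$ steps stays bounded. The remaining work — verifying the martingale property (via the relaxed-solution martingale, exactly as in \cref{prop:highermomentbd-1}) and the per-step time-scale estimate — is routine given \cref{prop:highermomentbd-1,prop:momentbd}, the definitions \cref{eq:smdef,eq:tmdef}, and the scaling relations \cref{eq:gammadeltacond}.
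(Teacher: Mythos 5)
Your proposal is correct and follows essentially the same route as the paper: the paper likewise writes $\EE\lvert Y_m\rvert^2=\EE\lvert Y_{m-1}\rvert^2+\EE\bigl[\EE[\lvert Y_m-Y_{m-1}\rvert^2\mid Y_{m-1}]\bigr]$ (the martingale orthogonality you make explicit), bounds the increment via \cref{prop:highermomentbd-1} with $\ell=2$, estimates the logarithmic ratio by $C\delta_\rho$, iterates over the $O(\delta_\rho^{-1})$ steps, and deduces \cref{eq:wmbound} from \cref{eq:wmdef} and \cref{prop:momentbd}. Your extra details (the auxiliary martingale $V_t$ and the explicit base case) are consistent with, and merely spell out, what the paper leaves implicit.
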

    \begin{proof}
        We have by \cref{eq:Ymrecurrence} and \cref{prop:highermomentbd-1} with $\ell=2$ that there is a constant $C=C(\sigma,T)<\infty$, which we will allow to increase if necessary from line to line throughout this proof, such that
        \begin{align}
            \EE\lvert Y_m\rvert^2 &= \EE\lvert Y_{m-1}\rvert^2 + \EE\left[\EE\lvert \clG_{t_m'-t_m}\clU_{t'_{m-1},t_m}Y_{m-1}-Y_{m-1}\rvert^2\mid Y_{m-1}]\rvert\right]\notag\\
                                  &= \EE\lvert Y_{m-1}\rvert^2 + C\EE\langle Y_{m-1}\rangle^2\log_{\rho^{-1}}\frac{t_m'-t_{m-1}'+\rho}{t_m'-t_m+\rho}\notag\\
                                  &\le\EE\lvert Y_{m-1}\rvert^2+ C\EE\langle Y_{m-1}\rangle^2\log_{\rho^{-1}}\left(1+\frac{\rho^{\gamma_\rho}(\rho^{-\delta_\rho}-1)}{1-\rho^{\gamma_\rho}}\right)\notag\\&\le \EE \lvert Y_{m-1}\rvert^2 + C\delta_\rho\EE \langle Y_{m-1}\rangle^2.\label{eq:L2stepbound}
        \end{align}
        This means that
        \[\EE \lvert Y_m\rvert^2 \le (1+C\delta_\rho)\EE\lvert Y_{m-1}\rvert^2+C.\]
        By induction, this means that for all $m\in [M_1(\rho,T),M_2(\rho)]$, we have
        \[\EE \lvert Y_m\rvert^2 \le C\langle \vvvert u_0\vvvert\rangle^2(1+C\delta_\rho)^{m-M_1(\rho,T)}\le C\langle \vvvert u_0\vvvert\rangle^2 (1+C\delta_\rho)^{C\delta_\rho^{-1}}\le C\langle \vvvert u_0\vvvert\rangle^2.\]
        This completes the proof of \cref{eq:Ymbound}, and \cref{eq:wmbound} then follows using the definition \cref{eq:wmdef} and an application of \cref{prop:momentbd} with $\ell=2$.
    \end{proof}

    Now we will show that $w^{(m)}$ is a good approximation for $u$ at appropriate space-time points.
    \begin{prop}\label{prop:wmclosetou}
        We have a constant $C=C(\rho,T)<\infty$ such that, for all $x\in \RR^2$,
        \begin{equation}
            \sup_{\substack{m\in [M_1(\rho,T),M_2(T)]\\t\in [t_{m+1},T]}} \EE \lvert w_t^{(m)}(x)-u_t(x)\rvert^2 \le C\langle \vvvert u_0\vvvert\rangle^2\left(\lvert x-X\rvert^2\rho^{-m\delta_\rho}+o(1)\right),
            \label{eq:wmclosetou}
        \end{equation}
        where $o(1)$ denotes a quantity that goes to $0$ as $\rho\downarrow 0$.
    \end{prop}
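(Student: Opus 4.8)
The plan is to prove \cref{eq:wmclosetou} by induction on $m$, running from $m=M_1(\rho,T)$ up to $m=M_2(\rho)$, using the estimates of \cref{sec:turn-off-noise} together with \cref{prop:uniformsecondmomentbound}, just as in the proof of the corresponding estimate in \cite{DG22}. The base case $m=M_1(\rho,T)$ is immediate: since $w^{(M_1(\rho,T))}_t=\clG_tu_0$ by \cref{eq:wic}, the difference $u_t(x)-w^{(M_1(\rho,T))}_t(x)$ is exactly the stochastic integral in \cref{eq:umild}, so taking its quadratic variation, using $R^\rho\ge 0$ together with $\lvert\sigma(u)\rvert_\Frob\le C\langle u\rangle$, and applying \cref{eq:applyHKwithkernelat0} and \cref{prop:momentbd}, one gets $\EE\lvert w^{(M_1(\rho,T))}_t(x)-u_t(x)\rvert^2\le C\langle\vvvert u_0\vvvert\rangle^2$ for all $t\in[0,T]$, with $C$ independent of $\rho$. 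This bound has no $x$-dependence, so it fits the claimed form with vanishing coefficient of $\lvert x-X\rvert^2$.

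For the inductive step, fix $m$ with $M_1(\rho,T)<m\le M_2(\rho)$ and $t\in[t_{m+1},T]$, and write
\[
w^{(m)}_t(x)-u_t(x)=\bigl(w^{(m)}_t(x)-w^{(m-1)}_t(x)\bigr)+\bigl(w^{(m-1)}_t(x)-u_t(x)\bigr).
\]
Because $w^{(m-1)}$ carries no modification on $[t_m,t]$, it solves \cref{eq:ueqn} there, so $w^{(m-1)}_t=\clU_{t_m,t}w^{(m-1)}_{t_m}$, whereas $w^{(m)}_t=\clU_{t_m',t}\clZ_X\clG_{t_m'-t_m}w^{(m-1)}_{t_m}$ by \cref{eq:wmdef}. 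Hence \cref{prop:turnoffflatten} applies with $\tau_1=t_m$, $\tau_2=t_m'$, $u_{\tau_1}=w^{(m-1)}_{t_m}$ (note $t_m'-t_m\le s_m\le T$ for $m>M_1(\rho,T)$), and combined with the second-moment bound \cref{eq:wmbound} it gives
\[
\EE\bigl|w^{(m)}_t(x)-w^{(m-1)}_t(x)\bigr|^2\le C\langle\vvvert u_0\vvvert\rangle^2\left(\frac{\log\frac{t-t_m+\rho}{t-t_m'+\rho}+1}{\log\rho^{-1}}+\frac{t-t_m'+\lvert x-X\rvert^2}{t_m'-t_m}\right).
\]
Using $t\ge t_{m+1}$ and the scale separations in \cref{eq:gammadeltacond,eq:rhodeltarhorhogammarholl1}, one checks $t-t_m\asymp s_m$, $t-t_m'\asymp s_m'$, $t_m'-t_m\asymp s_m$, so that this step error is at most $C\langle\vvvert u_0\vvvert\rangle^2\bigl(\gamma_\rho+\rho^{\gamma_\rho}+\lvert x-X\rvert^2\rho^{-m\delta_\rho}\bigr)$ with $C$ independent of $m$ and $\rho$. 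The second summand is controlled by the inductive hypothesis, since $[t_{m+1},T]\subseteq[t_m,T]$.

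It remains to assemble these and to keep the accumulation over the $O(\delta_\rho^{-1})$ values of $m$ under control. Applying $\lvert a+b\rvert^2\le(1+\delta_\rho)\lvert b\rvert^2+(1+\delta_\rho^{-1})\lvert a\rvert^2$ with $b$ the second summand and $a$ the first, and using $\rho^{-(m-1)\delta_\rho}=\rho^{\delta_\rho}\rho^{-m\delta_\rho}$, one obtains, writing the level-$m$ bound as $\langle\vvvert u_0\vvvert\rangle^2\bigl(P_m+Q_m\lvert x-X\rvert^2\rho^{-m\delta_\rho}\bigr)$, the recursions $P_m\le(1+\delta_\rho)P_{m-1}+C(1+\delta_\rho^{-1})(\gamma_\rho+\rho^{\gamma_\rho})$ and $Q_m\le(1+\delta_\rho)\rho^{\delta_\rho}Q_{m-1}+C(1+\delta_\rho^{-1})$, with $P_{M_1(\rho,T)}=O(1)$, $Q_{M_1(\rho,T)}=0$. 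Since $(1+\delta_\rho)\rho^{\delta_\rho}\to0$, the $Q_m$ stay bounded by $C\delta_\rho^{-1}$; since $(1+\delta_\rho)^{M_2(\rho)-M_1(\rho,T)}\le(1+\delta_\rho)^{C\delta_\rho^{-1}}\le C$, one gets $P_m\le C\bigl(P_{M_1(\rho,T)}+\delta_\rho^{-2}(\gamma_\rho+\rho^{\gamma_\rho})\bigr)$. The conditions \cref{eq:gammadeltacond,eq:gammarhocond} yield $\delta_\rho^{-2}\gamma_\rho\to0$ and $\delta_\rho^{-2}\rho^{\gamma_\rho}=(\delta_\rho^{-1}\rho^{\gamma_\rho/2})^2\to0$, hence $\delta_\rho P_m\to0$ as well (using $\delta_\rho P_{M_1(\rho,T)}\to0$). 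Taking $C(\rho,T)=\max(\sup_mQ_m,1)\asymp\delta_\rho^{-1}$, the $x$-independent part $P_m$ is then $C(\rho,T)$ times a quantity tending to $0$ with $\rho$, which is precisely \cref{eq:wmclosetou}.

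The main difficulty is this last bookkeeping: because there are $\asymp\delta_\rho^{-1}$ mesoscopic scales, each contributing a small but non-negligible ``turning off the noise'' error, the $\rho$-dependence of all constants must be tracked carefully, and it is exactly the scale separations $(\log\rho^{-1})^{-1}\ll\gamma_\rho\ll\delta_\rho^2$ and $\delta_\rho^{-1}\rho^{\gamma_\rho/2}\ll1$ of \cref{eq:gammadeltacond,eq:gammarhocond} that force the accumulated error — and hence the gap between $w^{(m)}$ and $u$ — to vanish in the limit.
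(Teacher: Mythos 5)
Your overall strategy — iterate the one–step estimate from \cref{prop:turnoffflatten} and the moment bound \cref{prop:uniformsecondmomentbound}, then sum over the $\asymp\delta_\rho^{-1}$ scales — is the right one and is the same as the paper's. But there are two substantive problems in the way you assemble the pieces, and together they make your final bound no stronger than the trivial one at the one point where the proposition is actually used ($x=X$, $m=M_2(\rho)$), so the argument does not carry through.

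First, your base case is wrong in a way that propagates. You compare $w^{(M_1(\rho,T))}_t=\clG_t u_0$ directly with $u_t$ and correctly find that this difference is $O(\langle\vvvert u_0\vvvert\rangle^2)$, but then assert that this ``fits the claimed form'' because the $\lvert x-X\rvert^2$ coefficient is zero. It does not fit: at $x=X$ the claimed bound is $C\langle\vvvert u_0\vvvert\rangle^2 o(1)$, while your base case is genuinely of order one (take, e.g., $\sigma\equiv1$, $t=T$: then $\EE\lvert\clG_T u_0(X)-u_T(X)\rvert^2 = \frac{1}{\log\rho^{-1}}\int_0^T\frac{\dif s}{2(d-2)(T-s+\rho)}\to \frac{1}{2(d-2)}$). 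Relatedly, your next inductive step invokes \cref{prop:turnoffflatten} on the pair $(w^{(M_1+1)},w^{(M_1)})$ using the claim that $w^{(M_1)}$ solves \cref{eq:ueqn} on $[t_{M_1+1},t]$; but $w^{(M_1)}=\clG_{\cdot}u_0$ has no noise, so $w^{(M_1)}_t\ne\clU_{t_{M_1+1},t}w^{(M_1)}_{t_{M_1+1}}$ and that application is not valid. The correct first step is to compare $w^{(M_1+1)}_t$ with $u_t$ itself via \cref{prop:turnoffflatten} applied with $\tau_1=0$ and $\tau_2=t'_{M_1+1}$ (since $w^{(M_1+1)}_t=\clU_{t'_{M_1+1},t}\clZ_X\clG_{t'_{M_1+1}}u_0$ and $u_t=\clU_{0,t}u_0$): using $t'_{M_1+1}\approx T(1-\rho^{\gamma_\rho})$ this gives a contribution $C\langle\vvvert u_0\vvvert\rangle^2(\gamma_\rho+\rho^{\gamma_\rho}+\lvert x-X\rvert^2/T)$, which is $o(1)$ at $x=X$. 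That is the base case that makes the induction deliver $o(1)$.

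Second, your bookkeeping is strictly lossier than the paper's. The Cauchy–Schwarz step $\lvert a+b\rvert^2\le(1+\delta_\rho)\lvert b\rvert^2+(1+\delta_\rho^{-1})\lvert a\rvert^2$ forces $Q_m\asymp\delta_\rho^{-1}\to\infty$ in front of $\lvert x-X\rvert^2\rho^{-m\delta_\rho}$. You then push this divergence into ``$C=C(\rho,T)$'' and rebalance so that the constant slot carries $\delta_\rho P_m$. But with $P_m=O(1)$ from the base case, the resulting bound at $x=X$ is $C(\rho,T)\cdot\delta_\rho P_m\asymp\delta_\rho^{-1}\cdot\delta_\rho=O(1)$ — the ``$o(1)$'' inside the bracket is exactly cancelled by the divergence you moved into $C(\rho,T)$, and you are back to the trivial bound. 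In particular this cannot yield \cref{eq:uapproxbychain}, where one takes $x=X$, $m=M_2(\rho)$ and needs the right side to vanish. The paper instead applies the $L^2$ triangle inequality to $\left(\EE\lvert w^{(m)}_t(x)-u_t(x)\rvert^2\right)^{1/2}$, i.e.\ sums the square roots of the step errors. The geometric factor $\sum\rho^{-i\delta_\rho/2}\le\rho^{-m\delta_\rho/2}/(1-\rho^{\delta_\rho/2})$ then has a $\rho$-independent constant (since $\rho^{\delta_\rho/2}\to0$), while the constant part accumulates at most a factor $m-M_1\lesssim\delta_\rho^{-1}$, giving $\delta_\rho^{-1}(\gamma_\rho^{1/2}+\rho^{\gamma_\rho/2}+(\log\rho^{-1})^{-1/2})$, which is $o(1)$ after squaring by precisely \crefrange{eq:gammadeltacond}{eq:gammarhocond}. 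So: start the telescoping from $u_t$ (not from $\clG_t u_0$), and use the $\ell^1$-of-$L^2$-norms triangle inequality rather than the weighted quadratic expansion; both changes are needed to obtain the bound with a $\rho$-independent constant, which is what the downstream use requires.
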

    \begin{proof}
        We have, whenever $M_1(\rho,T)\le m\le M_2(\rho)$ and $t\in [t_{m+1},T]$, that
        \begin{align}
            \EE&\lvert w_t^{(m)}(x)-w_t^{(m-1)}(x)\rvert^2 = \EE \left\lvert \clU_{t_m',t}\clZ_X\clG_{t_m'-t_m} w^{(m-1)}_{t_m}(x)-\clU_{t_m,t}w^{(m-1)}_{t_m}(x)\right\rvert^2\notag\\
            \overset{\cref{eq:turnoffflatten}}&\le C\langle \vvvert w_{t_m}^{(m-1)}\vvvert\rangle^2\left(\log_{\rho^{-1}}\frac{t_m'-t_m+\rho}{t_{m+1}-t_m'+\rho}+\frac{1}{\log\rho^{-1}} + \frac{t-t_m'+\lvert x-X\rvert^2}{t_m'-t_m}\right).\label{eq:wminductivestart}
        \end{align}
        We have \begin{equation}\vvvert w_{t_m}^{(m-1)}\vvvert\le C\langle\vvvert u_0\vvvert\rangle\label{eq:wmmoment}\end{equation} by \cref{eq:wmbound}.
        We also have 
    \begin{equation}
            \log_{\rho^{-1}}\frac{t_m'-t_m+\rho}{t_{m+1}-t_m'+\rho}\le 
            \log_{\rho^{-1}}\left(1+\frac{1-\rho^{\gamma_\rho}}{\rho^{\gamma_\rho}-\rho^{\delta_\rho}}\right)
            \overset{\cref{eq:rhodeltarhorhogammarholl1}}\le C\gamma_\rho,\label{eq:logbd}
        \end{equation}
        as well as 
        \begin{equation}
            \frac{t-t_m'}{t_m'-t_m}\le \frac{T-t_m'}{t_m'-t_m} \le \frac{\rho^{\gamma_\rho}}{1-\rho^{\gamma_\rho}}\overset{\cref{eq:rhodeltarhorhogammarholl1}}\le C\rho^{\gamma_\rho}\label{eq:fracbd}
        \end{equation}
        (for sufficiently small $\rho$).
        Using \crefrange{eq:wmmoment}{eq:fracbd}, as well as the fact that $t_m'-t_m = \rho^{m\delta_\rho}(1-\rho^{\gamma_\rho})\ge C^{-1}\rho^{m\delta_\rho}$ in \cref{eq:wminductivestart}, we see that
        \[
            \EE\lvert w_t^{(m)}(x)-w_t^{(m-1)}(x)\rvert^2\le C\langle\vvvert u_0\vvvert\rangle^2\left(\gamma_\rho+\rho^{\gamma_\rho}+(\log \rho^{-1})^{-1}+\rho^{-m\delta_\rho}\lvert x-X\rvert^2\right).
        \]
        Inductively applying this inequality along with the triangle inequality, we obtain
        \begin{align*}
        &\left(\EE \lvert w_t^{(m)}(x)-u_t(x)\rvert^2\right)^2 \le \sum_{i= M_1(\rho,T)}^m\left(\EE\lvert w_t^{(i)}(x)-w_t^{(i-1)}(x)\rvert^2\right)^{1/2}
      \\&\qquad \le C\langle \vvvert u_0\vvvert\rangle\left(\delta_\rho^{-1}\gamma_\rho^{1/2}+\delta_\rho^{-1}\rho^{\gamma_\rho/2}+\delta_\rho^{-1}(\log\rho^{-1})^{-1/2}+\frac{\lvert x-X\rvert\rho^{-m\delta_\rho/2}}{1-\rho^{-\delta_\rho/2}}\right).
        \end{align*}
        The first three terms in brackets go to $0$ as $\rho\downarrow 0$ by \crefrange{eq:gammadeltacond}{eq:gammarhocond}, and the denominator $1-\rho^{-\delta_\rho/2}$ similarly goes to $1$ as $\rho\downarrow 0$. Hence \cref{eq:wmclosetou} is proved.
    \end{proof}
    \begin{prop}
        We have a constant $C=C(\sigma,T)<\infty$ such that
        \begin{equation}\label{eq:uapproxbychain}
            \EE\lvert Y_{M_2(\rho)} - u_T(X)\rvert^2 \le C\langle \vvvert u_0\vvvert\rangle^2 o(1),
    \end{equation}
    where $o(1)$ denotes a quantity that goes to $0$ as $\rho\downarrow 0$.
    \end{prop}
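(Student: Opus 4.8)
The plan is to compare $Y_{M_2(\rho)}$ with $u_T(X)$ by routing through the intermediate field value $w^{(M_2(\rho))}_T(X)$: by the triangle inequality (together with $(a+b)^2\le 2a^2+2b^2$) it suffices to show separately that $\EE\lvert Y_{M_2(\rho)}-w^{(M_2(\rho))}_T(X)\rvert^2$ and $\EE\lvert w^{(M_2(\rho))}_T(X)-u_T(X)\rvert^2$ are each bounded by $C\langle\vvvert u_0\vvvert\rangle^2 o(1)$. The second of these is immediate: I would invoke \cref{prop:wmclosetou} with $m=M_2(\rho)$, $t=T$ and $x=X$, and note that the term $\lvert x-X\rvert^2\rho^{-m\delta_\rho}$ on its right-hand side vanishes at $x=X$, leaving only the $o(1)$ contribution.

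For the first bound I would use the description of the chain from \cref{eq:Ymdef,eq:wmdef}: at time $t_{M_2(\rho)}'$ the field $w^{(M_2(\rho))}$ equals the constant function with value $Y_{M_2(\rho)}$, and $w^{(M_2(\rho))}_t=\clU_{t_{M_2(\rho)}',t}Y_{M_2(\rho)}$ for $t\ge t_{M_2(\rho)}'$. Hence $w^{(M_2(\rho))}_T(X)-Y_{M_2(\rho)}$ is precisely the increment, over the short remaining time $s_{M_2(\rho)}'=T-t_{M_2(\rho)}'$, of the solution of \cref{eq:ueqn} started from the constant (but random, $\clF_{t_{M_2(\rho)}'}$-measurable) datum $Y_{M_2(\rho)}$. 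Conditioning on $\clF_{t_{M_2(\rho)}'}$—with respect to which $Y_{M_2(\rho)}$ is measurable, while the noise on $[t_{M_2(\rho)}',T]$ is independent—and applying \cref{prop:highermomentbd-1} with $\ell=2$, time origin shifted to $t_{M_2(\rho)}'$, and $\tau_1=\tau_2=s_{M_2(\rho)}'$, I would obtain
\[
\EE\bigl[\lvert w^{(M_2(\rho))}_T(X)-Y_{M_2(\rho)}\rvert^2\bigm|\clF_{t_{M_2(\rho)}'}\bigr]\le C\langle Y_{M_2(\rho)}\rangle^2\,\log_{\rho^{-1}}\frac{s_{M_2(\rho)}'+\rho}{\rho}.
\]
Taking expectations and bounding $\EE\langle Y_{M_2(\rho)}\rangle^2\le C\langle\vvvert u_0\vvvert\rangle^2$ via \cref{prop:uniformsecondmomentbound}, it remains to check that $\log_{\rho^{-1}}\frac{s_{M_2(\rho)}'+\rho}{\rho}\to 0$. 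This is elementary bookkeeping: by \cref{eq:M2rho,eq:smdef} one has $M_2(\rho)\delta_\rho\ge 1-\delta_\rho$, so $s_{M_2(\rho)}'=\rho^{M_2(\rho)\delta_\rho+\gamma_\rho}\le\rho^{1-\delta_\rho}$, whence $\frac{s_{M_2(\rho)}'+\rho}{\rho}\le 2\rho^{-\delta_\rho}$ and $\log_{\rho^{-1}}\frac{s_{M_2(\rho)}'+\rho}{\rho}\le\frac{\log 2}{\log\rho^{-1}}+\delta_\rho\to 0$ by \cref{eq:gammadeltacond}. Combining the two bounds yields \cref{eq:uapproxbychain}.

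The only point requiring a little care is the conditioning step used to apply \cref{prop:highermomentbd-1}—stated for deterministic initial data—to the random datum $Y_{M_2(\rho)}$; this is justified by the independence of the noise on $[t_{M_2(\rho)}',T]$ from $\clF_{t_{M_2(\rho)}'}$ together with the flow property of $\clU$, exactly as in the corresponding step of \cite{DG22}. Beyond that, the argument is merely an assembly of \cref{prop:wmclosetou,prop:highermomentbd-1,prop:uniformsecondmomentbound} with the scaling relations \cref{eq:gammadeltacond}, so I anticipate no genuine difficulty.
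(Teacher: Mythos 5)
Your proposal is correct and follows essentially the same route as the paper: split via $w^{(M_2(\rho))}_T(X)$, bound one piece with \cref{prop:wmclosetou} at $x=X$, and bound the other with \cref{prop:highermomentbd-1} (with $\ell=2$, $\tau_1=\tau_2=T-t'_{M_2(\rho)}$, applied conditionally on $\clF_{t'_{M_2(\rho)}}$) together with \cref{prop:uniformsecondmomentbound}. The only difference is the final bookkeeping, where you use $M_2(\rho)\delta_\rho\ge 1-\delta_\rho$ to get a bound of order $\delta_\rho+(\log\rho^{-1})^{-1}$ rather than the paper's $(\log\rho^{-1})^{-1}$; both are $o(1)$, which is all the statement requires.
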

    \begin{proof}
        By \cref{prop:wmclosetou},  we see that
        \begin{equation}
            \left(\EE\lvert w_T^{(M_2(\rho))}(X)-u_T(X)\rvert^2\right)^{1/2}\le C\langle \vvvert u_0\vvvert\rangle o(1).\label{eq:applyapprox}
        \end{equation}
        We also have by \cref{prop:highermomentbd-1} (applied with $\ell=2$ and $\tau_1 = \tau_2 = T-t'_{M_2(\rho)} = \rho^{M_2(\rho)\delta_\rho+\gamma_\rho}$) that
        \[
            \EE\lvert Y_{M_2(\rho)}-w_T^{(M_2(\rho))}(X)\rvert^2 \le C\EE\langle Y_{M_2(\rho)}\rangle^2\log_{\rho^{-1}}(1+\rho^{M_2(\rho)\delta_\rho + \gamma_\rho -1})
        \]
        Recalling \cref{eq:M2rho}, we see that $M_2(\rho)\delta_\rho + \gamma_\rho-1\ge \delta_\rho+\gamma_\rho$, and hence
        \[
            \log_{\rho^{-1}}(1+\rho^{M_2(\rho)\delta_\rho+\gamma_\rho-1)}\le\log_{\rho^{-1}}(1+\rho^{\delta_\rho+\gamma_\rho})\le \frac C{\log\rho^{-1}}
        \]
        by \cref{eq:rhodeltarhorhogammarholl1}. Together with \cref{prop:uniformsecondmomentbound}, this implies that
        \begin{equation}
            \EE\lvert Y_{M_2(\rho)}-w_T^{(M_2(\rho))}(X)\rvert^2\le \frac{C\langle\vvvert u_0\vvvert\rangle}{\log\rho^{-1}}.\label{eq:endinterval}
        \end{equation}
        Combining \cref{eq:applyapprox,eq:endinterval}, we obtain \cref{eq:uapproxbychain}.
    \end{proof}

\begin{prop}\label{prop:stepvar}
    Let $T>0$. There is a constant $C=C(T,\sigma,d)<\infty$ such that
    for each $a\in\RR^\fkm$, we have 
    \begin{equation}\label{eq:varbd}
        \sup_{m\in [M_1(\rho,T),M_2(\rho)]} \left\lvert \delta_\rho^{-1}\Var(Y_m\mid Y_{m-1}=a) - \frac{L_\rho(m\delta_\rho,a)^{\otimes 2}}{2(d-2)}\right\vert \le C\langle a\rangle^2 o(1),
\end{equation}
where $o(1)$ denotes a quantity that goes to $0$ as $\rho\downarrow 0$.
\end{prop}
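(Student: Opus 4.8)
The plan is to deduce \cref{eq:varbd} from \cref{prop:VarofStep} after a bookkeeping computation with the time scales of \cref{sec:timescalesMC}. Fix $m\in[M_1(\rho,T),M_2(\rho)]$ and set $\tau_1\coloneqq t_m-t_{m-1}'$ and $\tau_2\coloneqq t_m'-t_{m-1}'$, so that $\tau_2-\tau_1=t_m'-t_m$. By \cref{eq:Ymrecurrence} we have $Y_m=\clG_{t_m'-t_m}\clU_{t_{m-1}',t_m}Y_{m-1}(X)$; since $Y_{m-1}$ is measurable with respect to the noise up to time $t_{m-1}'$ while $\clU_{t_{m-1}',t_m}$ uses only the noise on $[t_{m-1}',t_m]$, the time-stationarity of $W^\rho$ and the autonomy of \cref{eq:ueqn} give that the conditional law of $Y_m$ given $\{Y_{m-1}=a\}$ is the law of $\clG_{\tau_2-\tau_1}\clU_{0,\tau_1}a(X)$. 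Hence $\Var(Y_m\mid Y_{m-1}=a)=\Var(\clG_{\tau_2-\tau_1}\clU_{0,\tau_1}a(X))$, and once $\tau_2\le2\tau_1$ is checked one may apply \cref{prop:VarofStep} in the form \cref{eq:varofstep-simple} to the right-hand side.

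The bookkeeping step, carried out uniformly in $m$, uses that by \cref{eq:smdef,eq:tmdef} one has $\tau_1=s_{m-1}'-s_m=\rho^{m\delta_\rho}(\rho^{-(\delta_\rho-\gamma_\rho)}-1)$ and $\tau_2-\tau_1=s_m-s_m'=\rho^{m\delta_\rho}(1-\rho^{\gamma_\rho})$, together with the consequences of \cref{eq:gammadeltacond,eq:gammarhocond} that $\gamma_\rho\ll\delta_\rho$, $(\log\rho^{-1})^{-1}\ll\delta_\rho$, $\rho^{\gamma_\rho}\to0$, $\rho^{\delta_\rho-\gamma_\rho}\to0$ and $\delta_\rho\log\rho^{-1}\to\infty$. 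From these I would verify: (i) $(\tau_2-\tau_1)/\tau_1=(1-\rho^{\gamma_\rho})/(\rho^{-(\delta_\rho-\gamma_\rho)}-1)\to0$, so $\tau_2\le 2\tau_1$ for small $\rho$; (ii) $\tau_2-\tau_1+\rho$ is comparable to $\rho^{m\delta_\rho}$ (here the endpoint indices $m\approx M_2(\rho)$ must be treated with care, using $\rho^{m\delta_\rho}\ge\rho$ so that the regularization $+\rho$ does not change the order), whence $1+\tfrac{\tau_1}{2(\tau_2-\tau_1+\rho)}$ is comparable to $\rho^{-(\delta_\rho-\gamma_\rho)}$ and $\log_{\rho^{-1}}\bigl(1+\tfrac{\tau_1}{2(\tau_2-\tau_1+\rho)}\bigr)=(\delta_\rho-\gamma_\rho)+O\bigl((\log\rho^{-1})^{-1}\bigr)=\delta_\rho(1+o(1))$; (iii) likewise $\log_{\rho^{-1}}\bigl(1+\tfrac{\tau_1}{\tau_2-\tau_1}\bigr)=O(\delta_\rho)$, so the error on the right of \cref{eq:varofstep-simple} is $O\bigl(\langle a\rangle^2(\delta_\rho^2+(\log\rho^{-1})^{-1})\bigr)$; and (iv) $\log_\rho\tau_1=(m-1)\delta_\rho+\gamma_\rho+O\bigl((\log\rho^{-1})^{-1}\bigr)$, so $1-\log_\rho\tau_1$ lies within $O(\delta_\rho)$ of the first argument of $L_\rho$ in \cref{eq:varbd}.

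To assemble, I would use \cref{lem:Ltimereg} to absorb the $O(\delta_\rho)$ discrepancy in (iv): it costs at most $C\langle a\rangle(\delta_\rho^{1/2}+(\log\rho^{-1})^{-1/2})$ in $L_\rho$, and then, with $|L_\rho|_\Frob\le C\langle a\rangle$ from \cref{lem:lem:Lbdd} and the elementary inequality $|A^{\otimes2}-B^{\otimes2}|_\Frob\le|A-B|_\Frob(|A|_\Frob+|B|_\Frob)$, I can replace $L_\rho(1-\log_\rho\tau_1,a)^{\otimes2}$ by the tensor square appearing in \cref{eq:varbd} at cost $C\langle a\rangle^2o(1)$. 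Substituting this together with (ii) and (iii) into \cref{eq:varofstep-simple} and dividing by $\delta_\rho$: the factor $(1+o(1))$ from (ii) multiplies a term bounded by $C\langle a\rangle^2$ and hence contributes $C\langle a\rangle^2o(1)$; the $L_\rho$-replacement error is multiplied by $\delta_\rho^{-1}\cdot\delta_\rho(1+o(1))=O(1)$ and so remains $C\langle a\rangle^2o(1)$; and the additive error becomes $O\bigl(\langle a\rangle^2(\delta_\rho+(\delta_\rho\log\rho^{-1})^{-1})\bigr)=C\langle a\rangle^2o(1)$. Since every estimate in the preceding paragraph is independent of $m$, the resulting bound is uniform over $m\in[M_1(\rho,T),M_2(\rho)]$, which is \cref{eq:varbd}.

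The main obstacle is the bookkeeping paragraph: the uniform-in-$m$ control of the three interlocking parameters $\gamma_\rho,\delta_\rho,\eta_\rho$, and especially step (ii) — showing that $\log_{\rho^{-1}}\bigl(1+\tfrac{\tau_1}{2(\tau_2-\tau_1+\rho)}\bigr)$ equals $\delta_\rho$ to leading order (this is precisely why \cref{eq:gammadeltacond} forces $\gamma_\rho\ll\delta_\rho^2$ and $(\log\rho^{-1})^{-1}\ll\gamma_\rho$), together with the separate treatment of the endpoint indices $m\approx M_2(\rho)$ where $\rho^{m\delta_\rho}$ and $\rho$ are comparable, and the check that the $1/2$-Hölder rates produced by \cref{lem:Ltimereg} still vanish after multiplication by $\delta_\rho^{-1}$.
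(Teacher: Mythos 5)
Your proof follows the same route as the paper: reduce via the Markov property and time-stationarity to $\Var(\clG_{\tau_2-\tau_1}\clU_{0,\tau_1}a(X))$ with $\tau_1=t_m-t_{m-1}'$ and $\tau_2=t_m'-t_{m-1}'$, verify $\tau_2\le 2\tau_1$ so that \cref{eq:varofstep-simple} applies, compute the asymptotics of $\log_{\rho^{-1}}\bigl(1+\tfrac12\tfrac{\tau_1}{\tau_2-\tau_1+\rho}\bigr)$ and of $\log_\rho\tau_1$ uniformly in $m$, and finally bridge the $L_\rho$ argument using \cref{lem:Ltimereg,lem:lem:Lbdd}. Steps (i)--(iii) and the error bookkeeping after dividing by $\delta_\rho$ are correct and essentially identical to the paper.

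The one point you should revisit is your step (iv). You compute $1-\log_\rho\tau_1=1-(m-1)\delta_\rho-\gamma_\rho+o\bigl((\log\rho^{-1})^{-1}\bigr)$ and then assert that this lies within $O(\delta_\rho)$ of ``the first argument of $L_\rho$ in \cref{eq:varbd}.'' As printed, that argument is $m\delta_\rho$, and the difference is
\[
\bigl|1-(m-1)\delta_\rho-\gamma_\rho-m\delta_\rho\bigr|\approx\bigl|1-2m\delta_\rho\bigr|,
\]
which is \emph{not} $O(\delta_\rho)$: in the range $m\in[M_1(\rho,T),M_2(\rho)]$ the quantity $m\delta_\rho$ sweeps out roughly $[\,\log_\rho T,\,1\,]$, so $|1-2m\delta_\rho|$ is order $1$ for most $m$. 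What your computation actually shows is that $1-\log_\rho\tau_1$ lies within $O(\delta_\rho)$ of $1-m\delta_\rho$. This is in fact what is intended: \cref{eq:varofstep-simple} outputs $L_\rho(1-\log_\rho\tau_1,a)$, the paper's own proof argues $|\log_\rho\tau_1-m\delta_\rho|\to0$, and the downstream use in the proof of \cref{thm:mainthm} (matching \cref{eq:variancesconverge} with $J(q,\cdot)$ replaced by $J(1-q,\cdot)$) requires the step-$m$ diffusivity to be $J_\rho(1-m\delta_\rho,a)^{\otimes 2}$ to leading order. So the statement \cref{eq:varbd} should read $L_\rho(1-m\delta_\rho,a)$, and with that reading your argument closes; but as written, your (iv) silently identifies $1-m\delta_\rho$ with $m\delta_\rho$ rather than flagging the discrepancy. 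Everything else you wrote, including the $|A^{\otimes2}-B^{\otimes2}|_\Frob\le|A-B|_\Frob(|A|_\Frob+|B|_\Frob)$ device and the observation that the $1/2$-Hölder cost from \cref{lem:Ltimereg} survives division by $\delta_\rho$ because it is multiplied by $\delta_\rho(1+o(1))$, is fine.
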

\begin{proof}
We recall the recurrence \cref{eq:Ymrecurrence}:
\[
    Y_m = \clG_{t_m'-t_m}\clU_{t'_{m-1},t_m}Y_{m-1}(X).
\]
We now apply \cref{prop:VarofStep} with 
\begin{equation}\label{eq:tau1}
\tau_1 = t_m - t_{m-1}' = \rho^{(m-1)\delta_\rho}(\rho^{\gamma_\rho}-\rho^{\delta_\rho})
\end{equation}
and
\begin{equation}\label{eq:tau2}
    \tau_2 = t_m'-t_{m-1}' = \rho^{(m-1)\delta_\rho+\gamma_\rho}(1-\rho^{\delta_\rho})\;.
\end{equation}
Since \[\frac{\tau_2}{\tau_1} = \frac{1-\rho^{\delta_\rho}}{1-\rho^{\delta_\rho-\gamma_\rho}}\to 1\qquad\text{as }\rho\downarrow 0,\] we can use \cref{eq:varofstep-simple}.
We can compute for all $m\in [M_1(\rho,T),M_2(\rho)]$ that
\begin{equation}
    \lim_{\rho\downarrow 0}\delta_\rho^{-1}\log_{\rho^{-1}}\left(1+\frac12\cdot\frac{\tau_1}{\tau_2-\tau_1+\rho}\right) = \lim_{\rho\downarrow 0}\delta_\rho^{-1}\log_{\rho^{-1}}\left(1+\frac12\cdot\frac{\rho^{-\delta_\rho}(\rho^{\gamma_\rho}-\rho^{\delta_\rho})}{1-\rho^{\gamma_\rho}+\rho^{1-m\delta_\rho}}\right)= 1\label{eq:timefactorconv}
\end{equation}
and
\begin{equation}
    \lim_{\rho\downarrow 0} \lvert \log_\rho\tau_1 -m\delta_\rho\rvert = 0,%
\end{equation}
and moreover these limits are uniform in $m$.
These two limits, along with the time-continuity of $L_\rho$ proved in \cref{lem:Ltimereg}, mean that the difference between the left side of \cref{eq:varbd} and $\delta_\rho$ times the left side of \cref{eq:varofstep-simple} converges to $0$ as $\rho\downarrow 0$, again uniformly in $m$.

To bound the right side of \cref{eq:varofstep-simple} given our choices \cref{eq:tau1,eq:tau2}, we compute (similarly to \cref{eq:timefactorconv})
\[
    \delta_\rho^{-1}\left(\log_{\rho^{-1}}\left(1+\frac{\tau_1}{\tau_2-\tau_1}\right)\right)^2= \delta_\rho^{-1}\left(\log_{\rho^{-1}}\left(1+\frac{\rho^{\gamma_\rho}-\rho^{\delta_\rho}}{\rho^{\delta_\rho}(1-\rho^{\gamma_\rho})}\right)\right)^2\le C\delta_\rho\to 0\quad\text{as }\rho\downarrow 0
    \]
    and also
$ (\delta_\rho\log\rho^{-1})^{-1}\to 0$ as $\rho\downarrow 0$
    by \cref{eq:gammadeltacond}.
    With these estimates on its components, the bound \cref{eq:varofstep-simple} leads to the conclusion \cref{eq:varbd}.
\end{proof}

We also need a higher moment bound, which will be a consequence of \cref{prop:highermomentbd-1}.
\begin{prop}\label{prop:highermoment-Ys}
    There is an $\ell >2$ such that, for all $a\in\RR^\fkm$, we have
    \begin{equation}\label{eq:highermoment-Ys}
        \sup_{m\in [M_1(\rho,T),M_2(\rho)]} \EE[ \lvert Y_m -Y_{m-1}\rvert^\ell\mid Y_{m-1}=a] \le C\langle a\rangle^\ell\delta_\rho^{\ell/2}.
\end{equation}
\end{prop}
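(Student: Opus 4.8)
The plan is to deduce \cref{eq:highermoment-Ys} from the single-step moment bound \cref{prop:highermomentbd-1}, in exactly the way \cref{prop:stepvar} was deduced from \cref{prop:VarofStep}. First I would invoke the recurrence \cref{eq:Ymrecurrence},
\[
  Y_m = \clG_{t_m'-t_m}\clU_{t_{m-1}',t_m}Y_{m-1}(X),
\]
and observe that $Y_{m-1}$ is built from the noise restricted to $\bigcup_{j\le m-1}[t_{j-1}',t_j]\subseteq[0,t_{m-1}]$, whereas the passage from $Y_{m-1}$ to $Y_m$ is driven by the noise on $[t_{m-1}',t_m]$; since $t_{m-1}<t_{m-1}'$ these intervals are disjoint. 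Hence, in the sense of stochastic flows (as in the discussion surrounding \cref{eq:rewrite-diffusivity} and used repeatedly for the chain $(Y_m)$), conditioning on $\{Y_{m-1}=a\}$ gives
\[
  \EE\bigl[\lvert Y_m-Y_{m-1}\rvert^\ell \mid Y_{m-1}=a\bigr]
  = \EE\bigl\lvert \clG_{\tau_2-\tau_1}\clU_{0,\tau_1}a(X)-a\bigr\rvert^\ell,
\]
with $\tau_1 = t_m-t_{m-1}' = \rho^{(m-1)\delta_\rho}(\rho^{\gamma_\rho}-\rho^{\delta_\rho})$ and $\tau_2 = t_m'-t_{m-1}' = \rho^{(m-1)\delta_\rho+\gamma_\rho}(1-\rho^{\delta_\rho})$, the same time scales as in the proof of \cref{prop:stepvar}; note $\tau_2-\tau_1 = t_m'-t_m$, and $0\le\tau_1\le\tau_2$ by $\gamma_\rho\ll\delta_\rho$ (see \cref{eq:gammadeltacond}).

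Next I would apply \cref{prop:highermomentbd-1}, choosing once and for all some fixed $\ell\in(2,\ell_0]$ with $\ell_0=\ell_0(\sigma,T)>2$ as in that proposition; the point is that $\ell_0$ and the resulting constant $C=C(\sigma,T)<\infty$ do not depend on $\rho$ or on $m$. This gives
\[
  \EE\bigl[\lvert Y_m-Y_{m-1}\rvert^\ell \mid Y_{m-1}=a\bigr]
  \le C\langle a\rangle^\ell\left(\log_{\rho^{-1}}\frac{\tau_2+\rho}{\tau_2-\tau_1+\rho}\right)^{\ell/2}.
\]
It then remains to bound the logarithm by $C\delta_\rho$, uniformly in $m$, for all $\rho$ small. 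But since $\tau_2\ge\tau_2-\tau_1>0$,
\[
  \log_{\rho^{-1}}\frac{\tau_2+\rho}{\tau_2-\tau_1+\rho}
  \le \log_{\rho^{-1}}\frac{\tau_2}{\tau_2-\tau_1}
  = \log_{\rho^{-1}}\frac{\rho^{\gamma_\rho}(\rho^{-\delta_\rho}-1)}{1-\rho^{\gamma_\rho}}
  \le C\delta_\rho,
\]
where the last inequality is precisely the estimate already carried out in \cref{eq:L2stepbound} in the proof of \cref{prop:uniformsecondmomentbound}, and again uses \cref{eq:gammadeltacond}. Combining the previous two displays and taking the supremum over $m\in[M_1(\rho,T),M_2(\rho)]$ yields \cref{eq:highermoment-Ys}.

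I do not expect a real obstacle here: the proposition is essentially a corollary of \cref{prop:highermomentbd-1} together with the time-scale bookkeeping from \cref{sec:timescalesMC}. The only two points needing any care are the stochastic-flow conditioning identity in the first step — entirely analogous to reasoning used elsewhere for $(Y_m)$ — and checking that the exponent $\ell>2$ can be fixed uniformly in $\rho$, which \cref{prop:highermomentbd-1} guarantees. As in the proof of \cref{prop:stepvar}, one should strictly speaking apply \cref{prop:highermomentbd-1} with $T$ replaced by a fixed constant bounding all the relevant values of $\tau_2$; this changes neither the structure of the argument nor the $\rho$-independence of $C$.
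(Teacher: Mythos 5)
Your proposal is correct and follows essentially the same route as the paper: condition via the recurrence \cref{eq:Ymrecurrence}, apply \cref{prop:highermomentbd-1} with $\tau_1=t_m-t_{m-1}'$ and $\tau_2=t_m'-t_{m-1}'$, and bound the resulting logarithm by $C\delta_\rho$ using \cref{eq:gammadeltacond}. The extra care you take with the stochastic-flow conditioning and the uniform choice of $\ell$ is fine and matches what the paper leaves implicit.
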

\begin{proof}
    By \cref{eq:Ymrecurrence,prop:highermomentbd-1}, the left side of \cref{eq:highermoment-Ys} is bounded above by the right side of \cref{eq:justdealwiththelog-1-1} with the choices $\tau_2 = t_m'-t_{m-1}'$ and $\tau_1=t_m -t_{m-1}'$, namely
    \[
        C\langle a\rangle^\ell \left(\log_{\rho^{-1}}\frac{t_m'-t_{m-1}'+\rho}{t_m'-t_m+\rho}\right)^{\ell/2}.
    \]
    We note that
    \[
        \log_{\rho^{-1}}\frac{t_m'-t_{m-1}'+\rho}{t_m'-t_m+\rho} \le \log_{\rho^{-1}} \left(1+\frac{\rho^{-\delta_\rho + \gamma_\rho} - \rho^{\gamma_\rho}}{1-\rho^{\gamma_\rho}}\right)\le C\delta_\rho
    \]
    for an absolute constant $C$ by \cref{eq:gammadeltacond}. Thus we obtain \cref{eq:highermoment-Ys}.
\end{proof}

    \subsection{Convergence of the Markov chain to the forward-backward SDE}
    In this section we will use the general convergence criterion \cref{thm:MCtoDiffusion} to show that our Markov chain converges to a diffusion. We will furthermore identify the diffusion as a solution to our forward-backward SDE.

    \begin{proof}[Proof of \cref{thm:mainthm}]
        Let \begin{equation}J_\rho = L_\rho/\sqrt{2(d-2)}.\label{eq:Jrhodef}\end{equation}
    By \cref{prop:Lcompactness}, we can find a sequence $\rho_k\downarrow 0$ and a continuous function $J\colon [0,1]\times \RR^\fkm\to \RR^\fkm\otimes\RR^\fkn$ such that
\begin{equation}\label{eq:Jconv}J_{\rho_k}\to J\qquad\text{uniformly on compact subsets of }[0,1]\times\RR^\fkm.\end{equation}
For the moment, we fix the choice of subsequence, and all quantities may depend on it. Later, we will show that $J$ in fact does not depend on the choice of subsequence.

For $Q\in [0,1]$ and $a\in \RR^\fkm$, let $\tilde \Theta^J_{a,Q}$ solve the SDE problem
\begin{align}
    \dif \tilde\Theta^J_{a,Q}(q) &= J(1-q,\tilde\Theta^J_{a,Q}(q))\dif B(q),\qquad q\in (1-Q,1];\\
    \tilde\Theta^J_{a,Q}(1-Q)&=a.
\end{align}
We note that
\begin{equation}
    \tilde\Theta^J_{a,Q}(1)\overset{\mathrm{law}}= \Theta^J_{a,Q}(Q),\label{eq:thetatildetheta}
\end{equation}
where $\Theta^J_{a,Q}$ solves \cref{eq:Thetaproblem}.
Now fix $Q\in [0,1]$ and $a\in \RR^\fkm$ and let $T_k = \rho_k^{1-Q}$. Since we will now simultaneously consider the SPDE with varying choices of $\rho$, we will use the notation $\clU^{(\rho)}_{s,t}$ for the propagator to indicate the dependence on $\rho$. %
We want to apply \cref{thm:MCtoDiffusion} (with $J(q,\cdot)$ replaced by $J(1-q,\cdot)$) to show that, for fixed $X\in\RR^d$, we have
\begin{equation}\label{eq:SDEconv}
\clU^{(\rho_k)}_{0,T_k}a(X) %
\xrightarrow[k\to\infty]{\mathrm{law}} \tilde\Theta^J_{a,Q}(1).
\end{equation}
We consider the Markov chain $(Y_m)$ introduced in \cref{sec:timescalesMC} with $T=T_k$, $\rho = \rho_k$, and $u_0 \equiv a$. We let $A_1(k) = M_1(\rho_k,T_k)+1$ and $A_2(k) = M_2(\rho_k)$. The definitions \crefrange{eq:wic}{eq:Ymdef} mean that
\cref{eq:icconv} is satisfied with $Z\equiv a$. The \ordinalstringnum{\getrefnumber{enu:lipschitz}} condition of \cref{thm:MCtoDiffusion} is satisfied by \cref{lem:Lspacereg,eq:Jconv}. By the definitions \crefrange{eq:M1rho}{eq:M2rho}, the condition \cref{eq:endpointsconverge} is satisfied with $A_1 = 1-Q$ and $A_2 = 1$. Also, by \cref{eq:Jconv,prop:stepvar}, we see that \cref{eq:variancesconverge} is satisfied, and by \cref{prop:highermoment-Ys}, we see that \cref{eq:momentboundreq} is satisfied. Therefore, all of the conditions of \cref{thm:MCtoDiffusion} are satisfied, and hence \cref{eq:SDEconv} holds.

We can now conclude that
\begin{equs}
        J(Q,a) \overset{\cref{eq:Jconv}}&= \lim_{k\to\infty} J_{\rho_k}(Q,a) \overset{\substack{\cref{eq:Jrhodef},\\\cref{eq:Lrhodef}}}=  \frac{\lim\limits_{k\to\infty} \EE[\sigma(\clU^{(\rho_k)}_{0,T_k} a(X))]}{\sqrt{2(d-2)}} \overset{\cref{eq:SDEconv}}= \frac{\EE[\sigma(\tilde{\Theta}^J_{a,Q}(1))]}{\sqrt{2(d-2)}}\\
        \overset{\cref{eq:thetatildetheta}}&= \frac{\EE[\sigma(\Theta^J_{a,Q}(Q))]}{\sqrt{2(d-2)}} \overset{\cref{eq:Qdef}}= \clQ_{\sigma/\sqrt{2(d-2)}}J(Q,a).\label{eq:Jrelation}
\end{equs}
In the third identity we additionally used the fact that the family of random variables $(\sigma(\clU^{(\rho_k)}_{0,T_k}a(X)))_k$ is uniformly integrable by \cref{prop:momentbd} and the assumption that $\sigma$ is Lipschitz. But \cref{eq:Jrelation} means that $J$ satisfies \cref{eq:Qfp}, and hence by \cref{prop:Jcharacterization} that $J$ is uniquely determined, and therefore coincides with the function $J$ coming from the solution of \crefrange{eq:FBSDE-SDE}{eq:FBSDE-J}. In particular, it does not depend on the choice of subsequence, and so in fact \cref{eq:Jconv} can be upgraded to convergence as $\rho\downarrow 0$. 

We can now apply \cref{thm:MCtoDiffusion} again, equipped this time with this stronger notion of convergence. Let $\rho_k\downarrow 0$ be \emph{any} sequence decreasing to zero. We consider general $u_0$ satisfying the conditions of the theorem, fix $T=t$, and let $A_1(k) = M_1(\rho_k,T)$ and $A_2(k)=M_2(\rho_k)$.
We note by the definitions \crefrange{eq:wic}{eq:Ymdef} that (again using the Markov chain $(Y_m)$ from \cref{sec:timescalesMC} with these choices), we have
\[
    Y_{A_1(k)} = \clG_{t_{M_1(\rho_k,T_k))}'}u_0(x) \to \clG_t u_0(x)\qquad\text{in probability as }k\to\infty
\]
since
$t_{M_1(\rho_k,T_k))}' \to t$ as $k\to\infty$ by \cref{eq:rhodeltarhorhogammarholl1,eq:smdef,eq:tmdef,eq:M1rho}.
Thus, using \cref{thm:MCtoDiffusion} in the same way as above with $Q=0$, we get
\[
    \clU^{(\rho)}_{0,t} u_0(x)\xrightarrow[\rho\downarrow 0]{\mathrm{law}}\Gamma_{\clG_tu_0(x),1}(1),
\]
where $\Gamma_{\clG_{t}u_0(x),1}$ solves \crefrange{eq:FBSDE-SDE}{eq:FBSDE-ic} with $Q=1$.
We can then upgrade this convergence to the Wasserstein convergence \cref{eq:Wasserstein-convergence} using the moment bound in \cref{prop:momentbd}.
\end{proof}

\section{Analysis of the second moment in the scalar linear case\label{sec:analysis-second-moment}}
In this section we prove \cref{thm:nosecondmomentphasetrans}. The proof is independent of the rest of the paper, and uses different methods. Specifically, since the problem is linear in the initial data, we can write a closed PDE for the two-point correlation function of the solution. A variant of this PDE was studied in \cite{APP09}, and we use an explicit supersolution given there to bound the solution.
\begin{proof}[Proof of \cref{thm:nosecondmomentphasetrans}]
As in the theorem statement, we assume that $\fkm = \fkn = 1$ and $\sigma(u)=\beta u$ for some $\beta\in (0,\infty)$, and let $(u_t(x))$ solve \cref{eq:ueqn} with initial condition $u_0\equiv 1$. Fix $T<\infty$ arbitrarily. We define the function $k\colon\RR^d\to\RR$ by
\begin{equation}\label{eq:kdev}
    k_t(x)\coloneqq \EE [u_t(0)u_t(x)].
\end{equation}
Then, by Itô's formula applied to the SPDE \cref{eq:ueqn}, the function $k$ satisfies the PDE
\begin{subequations}
\begin{align}
    \partial_t k_t(x) &= \left(\Delta+\frac{\beta^2}{\log\rho^{-1}}R^\rho(x)\right)k_t(x).\label{eq:kpde}\\
    k_0(x) &= 1.
\end{align}
\end{subequations}
Let $v$ solve the PDE \cref{eq:kpde} but with initial condition $v_0=\delta_0$ (a delta distribution at the origin). %
By the symmetry of the right side of \cref{eq:kpde}, we have
\begin{equation}\label{eq:useselfadjoint}
    \EE[u_t(x)^2] = k_t(0) = \int_{\RR^d} v_t(x)\,\dif x.
\end{equation}

By the comparison principle, we have
\begin{equation}
    v_t(x) \le \exp\left\{\frac{\beta^2 R^\rho(0)t}{\log \rho^{-1}}\right\}G_{2t}(x).\label{eq:vteasycomp}
\end{equation}
In particular, this means that that
\begin{equation}\label{eq:vrhoestimate}
    v_{\rho/2}(x) \le \exp\left\{\frac{\beta^2 R^\rho(0)\rho}{2\log \rho^{-1}}\right\}G_{2\rho}(x)\overset{\cref{eq:HKwithkernelat0}}=\exp\left\{\frac{\beta^2}{4(d-2)\log \rho^{-1}}\right\}G_{\rho}(x)\le 2G_{\rho}(x)
\end{equation}
if $\rho$ is sufficiently small.

Let $A<\infty$  be large enough that \begin{equation}\label{eq:Acond}R^\rho(x)\le A\lvert x\rvert^{-2}\qquad\text{ for all }x\in\RR^d\setminus\{0\}\text{ and all }\rho>0.\end{equation} (That is, let $A$ be the $C$ from \cref{eq:Rrhobound}.)
Put
\begin{equation}\label{eq:alphadef}
        \alpha_\rho %
        = \frac{d-2}2\left(1-\sqrt{1-\frac{4A\beta^2}{(d-2)^2\log\rho^{-1}}}\right)
    \end{equation}
and define
\begin{equation}
    \label{eq:tildevtdef}
        \tilde v_t(x) = t^{\alpha_\rho}\lvert x\rvert^{-\alpha_\rho}G_{2t}(x).
\end{equation}
Then we have
\begin{equation}
    \partial_t \tilde v_t(x) = \left(\Delta+\frac{A\beta^2}{\lvert x\rvert^2\log\rho^{-1}}\right)\tilde v_t(x),\label{eq:tildevteqn}
\end{equation}
as computed in \cite[§2.1,~p.~902]{APP09}.
There is a constant $C$, independent of $\rho\in(0,1]$, such that $G_1(x)\le C\lvert x\rvert^{-\alpha_\rho} G_2(x)$, which means that
\[
    G_\rho(x) = \rho^{-d/2}G_1(\rho^{-1/2}x) \le C\rho^{\alpha_\rho/2-d/2}\lvert x\rvert^{-\alpha_\rho}G_2(\rho^{-1/2}x) = C\rho^{\alpha_\rho/2}\lvert x\rvert^{-\alpha_\rho}G_{2\rho}(x)
\]
for all $x\in\RR^d$.
Using this with  \cref{eq:vrhoestimate,eq:tildevtdef}, we see that
\begin{equation}
    v_{\rho/2}(x)\le 2C \rho^{-\alpha_\rho/2} \tilde v_{\rho}(x)\qquad\text{for all }x\in\RR^d.\label{eq:timerhocomp}
\end{equation}
The condition \cref{eq:Acond} implies a comparison principle between \cref{eq:kpde,eq:tildevteqn}. Using this comparison principle along with the comparison \cref{eq:timerhocomp}, we see that
\begin{equation}
    v_t(x)\le 2C\rho^{-\alpha_\rho/2}\tilde v_{t-\rho/2}(x)\qquad\text{for all }t\ge \rho/2\text{ and all }x\in\RR^d.
\end{equation}
In particular, we have
\begin{equation}
    \int v_t(x)\,\dif x \le 2C\rho^{-\alpha_\rho/2}(t-\rho/2)^{\alpha_\rho}\int \lvert x\rvert^{\alpha_\rho} G_{2t}(x)\,\dif x.\label{eq:intvbd}
\end{equation}
On the right side of \cref{eq:intvbd}, the integral and the term $(t-\rho/2)^{\alpha_\rho}$ are both bounded above, uniformly in $\rho\in (0,1]$ and in $t\in [\rho/2,T]$. Also, from \cref{eq:alphadef} and the elementary bound $\sqrt{1-x}\ge 1-x$ for $x\in [0,1]$, we notice that for sufficiently small $\rho$, we have
\[
    \alpha_\rho \le \frac{2A\beta^2}{(d-2)\log\rho^{-1}},
\]
which means that
\[
    \rho^{-\alpha_\rho/2} = \exp\left\{\frac{\alpha_\rho}2\log\rho^{-1}\right\}\le \exp\left\{\frac{A\beta^2}{d-2}\right\}
\]
is bounded above independently of $\rho$. Hence, the right side of \cref{eq:intvbd} is bounded above independently of $\rho$ and $t\in [\rho/2,T]$. Thus, by \cref{eq:useselfadjoint}, the quantity $\sup_{t\in [\rho/2,T]}\EE[u_t(0)^2]$ is bounded uniformly in $\rho$.

To complete the proof, we must show that $\sup_{t\in[0,\rho/2]}\EE [u_t(0)^2]$ is bounded uniformly in $\rho$. But this is clear from \cref{eq:useselfadjoint,eq:vteasycomp,eq:HKwithkernelat0}.
\end{proof}

\begin{appendix}

    \section{Convergence of Markov chains to diffusions}
This section is exactly analogous to \cite[Appendix A]{DG22}. We
need a multidimensional version of that result, which again is a simple
application of the results of \cite[Section 11.2]{SV06}.
\begin{thm}
\label{thm:MCtoDiffusion}Suppose that we are given a sequence $\delta_{k}\downarrow0$,
a sequence of $\RR^{\mathfrak{m}}$-valued discrete Markov martingales
$(\{Y^{(k)}_m\}_{m\in A_{1}(k),\ldots,A_{2}(k)})_{k=1}^{\infty}$,
and a continuous function $J\colon[A_{1},A_{2}]\times\RR^\fkm\to\RR^\fkm\otimes\RR^\fkn$
satisfying the following conditions:
\begin{enumerate}
\item There is an $\RR^\fkm$-valued random variable $Z$ such that
    \begin{equation}\label{eq:icconv}
        Y^{(k)}_{A_1(k)}\xrightarrow[\mathrm{law}]{k\to\infty} Z\qquad\text{as }k\to\infty.
    \end{equation}
\item \label{enu:lipschitz}For each $q\in[A_{1},A_{2}]$, the function $J(q,\cdot)\colon\RR^{\mathfrak{m}}\to\RR^\fkm\otimes\RR^\fkn$
is Lipschitz, and the Lipschitz constant is bounded above independent
of $q$.
\item We have $\delta_{k}m\in[A_{1},A_{2}]$ for all $k\ge1$ and $m=A_{1}(k),\ldots,A_{2}(k)$,
and moreover
\begin{equation}
\lim_{k\to\infty}\delta_{k}A_{1}(k)=A_{1}\qquad\text{and}\qquad\lim_{k\to\infty}\delta_{k}A_{2}(k)=A_{2}.\label{eq:endpointsconverge}
\end{equation}
\item For each $R<\infty$, we have
\begin{equation}
\lim_{k\to\infty}\sup_{\substack{\lvert x\rvert\le R\\
A_{1}(k)\le m<A_{2}(k)
}
}\left\lvert\delta_{k}^{-1}\Var[Y^{(k)}_{m+1}\mid Y^{(k)}_m=x]-J(\delta_{k}m,x)J(\delta_km,x)^\top\right\rvert=0.\label{eq:variancesconverge}
\end{equation}
\item There is an $\ell>2$ such that, for each $R<\infty$, we have
\begin{equation}
\sup_{\substack{k<\infty,\lvert x\rvert\le R,\\
A_{1}(k)\le m<A_{2}(k)
}
}\delta_{k}^{-\ell/2}\mathbb{E}[\lvert Y^{(k)}_{m+1}-Y^{(k)}_m\rvert^{\ell}\mid Y^{(k)}_m=x]<\infty.\label{eq:momentboundreq}
\end{equation}
\end{enumerate}
Let $(Y(q))_{q\in[A_{1},A_{2}]}$ solve the stochastic differential
equation
\begin{align*}
\dif Y(q) & =J(q,Y(q))\dif B(q),\qquad q\in(A_{1},A_{2});\\
Y(A_{1}) & =X,
\end{align*}
where $B(q)$ is a standard $\RR^\fkn$-valued Brownian motion. Then we have
\[
Y^{(k)}_{A_{2}(k)}\xrightarrow[k\to\infty]{\mathrm{law}}Y(A_{2}).
\]
\end{thm}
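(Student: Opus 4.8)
The plan is to follow the classical martingale-problem route of \cite[Section~11.2]{SV06}, exactly as in \cite[Appendix~A]{DG22}, now in the multidimensional setting. First I would pass from the discrete chain to a continuous-time process: for each $k$, let $\bar Y^{(k)}\colon[A_1,A_2]\to\RR^\fkm$ be the piecewise-linear interpolation with nodes $\bar Y^{(k)}(\delta_k m)=Y^{(k)}_m$ for $A_1(k)\le m\le A_2(k)$, extended by the endpoint values on $[A_1,\delta_kA_1(k)]$ and $[\delta_kA_2(k),A_2]$, and let $\clP_k=\Law(\bar Y^{(k)})$ on $C([A_1,A_2];\RR^\fkm)$. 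Since $Y^{(k)}$ is a Markov martingale, $\EE[Y^{(k)}_{m+1}\mid Y^{(k)}_m=x]=x$, so the conditional covariance in \cref{eq:variancesconverge} equals $\EE[(Y^{(k)}_{m+1}-Y^{(k)}_m)^{\otimes 2}\mid Y^{(k)}_m=x]$.

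The first step is tightness of $\{\clP_k\}_k$. For a block of indices $m\le m'$ the partial sum $\sum_{j=m}^{m'-1}(Y^{(k)}_{j+1}-Y^{(k)}_j)$ is a martingale, so the Burkholder--Davis--Gundy inequality together with a power-mean (Jensen) bound on its bracket $\sum_j\lvert Y^{(k)}_{j+1}-Y^{(k)}_j\rvert^2$ and the hypothesis \cref{eq:momentboundreq} yields $\EE\lvert Y^{(k)}_{m'}-Y^{(k)}_m\rvert^\ell\le C\bigl((m'-m)\delta_k\bigr)^{\ell/2}$ with $\ell>2$, uniformly in $k$ (on bounded sets, which suffices after a standard truncation, also using \cref{eq:icconv}). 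Combined with tightness of the initial laws from \cref{eq:icconv}, the Kolmogorov--Chentsov criterion gives tightness of $\{\clP_k\}_k$ in $C([A_1,A_2];\RR^\fkm)$ together with a quantitative modulus of continuity.

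The second and main step is to identify every subsequential limit $\clP$ of $\{\clP_k\}_k$ as a solution of the martingale problem for the (possibly degenerate) operator $\clL_q g=\tfrac12\tr\bigl[(JJ^\top)(q,\cdot)\nabla^2 g\bigr]$ with initial law $\Law(Z)$. Fix $g\in C^\infty_c(\RR^\fkm)$. A second-order Taylor expansion of $g$ with the Markov and (centered) martingale properties gives
\[
  \EE\bigl[g(Y^{(k)}_{m+1})-g(Y^{(k)}_m)\bigm| Y^{(k)}_m=x\bigr]=\tfrac12\tr\bigl[\nabla^2 g(x)\,\EE[(Y^{(k)}_{m+1}-Y^{(k)}_m)^{\otimes 2}\mid Y^{(k)}_m=x]\bigr]+r^{(k)}_m(x),
\]
where, for any $\eta>0$, $\lvert r^{(k)}_m(x)\rvert\le C_g\bigl(\eta\,\EE[\lvert Y^{(k)}_{m+1}-Y^{(k)}_m\rvert^2\mid Y^{(k)}_m=x]+\EE[\lvert Y^{(k)}_{m+1}-Y^{(k)}_m\rvert^2\one_{\{\lvert Y^{(k)}_{m+1}-Y^{(k)}_m\rvert>\eta\}}\mid Y^{(k)}_m=x]\bigr)$; the second term is $o(\delta_k)$ uniformly for $\lvert x\rvert\le R$ by \cref{eq:momentboundreq} with $\ell>2$, so $r^{(k)}_m=o(\delta_k)$ after letting $k\to\infty$ and then $\eta\to0$. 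Summing over $m$ in a block, using \cref{eq:variancesconverge} to replace $\delta_k^{-1}\EE[(Y^{(k)}_{m+1}-Y^{(k)}_m)^{\otimes 2}\mid Y^{(k)}_m=x]$ by $(JJ^\top)(\delta_k m,x)$, and passing to the limit along the subsequence (the continuity of $J$ and the uniform moment bounds legitimating convergence of the time integral and of the bounded continuous functionals used to test the martingale property), one obtains that $g(Y(q))-g(Y(A_1))-\int_{A_1}^q\clL_sg(Y(s))\,\dif s$ is a $\clP$-martingale; with \cref{eq:icconv} this shows $\clP$ solves the martingale problem.

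Finally, hypothesis~\ref{enu:lipschitz} (uniform Lipschitz continuity of $J(q,\cdot)$) gives strong existence and pathwise uniqueness for $\dif Y(q)=J(q,Y(q))\,\dif B(q)$, hence uniqueness of the above martingale problem with the prescribed initial law; therefore $\clP=\Law((Y(q))_{q\in[A_1,A_2]})$, and since the subsequence was arbitrary, $\clP_k\to\Law((Y(q)))$ weakly along the full sequence. Evaluating at the right endpoint (using \cref{eq:endpointsconverge}, the identity $\bar Y^{(k)}(\delta_kA_2(k))=Y^{(k)}_{A_2(k)}$, and the modulus-of-continuity bound to absorb $\lvert A_2-\delta_kA_2(k)\rvert$) then gives $Y^{(k)}_{A_2(k)}\to Y(A_2)$ in law. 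I expect the main obstacle to be the Lindeberg-type control of the Taylor remainder in the identification step, i.e.\ showing it is $o(\delta_k)$ uniformly on compact sets — this is exactly where the upgrade of the second-moment bound to an $\ell$-th moment bound with $\ell>2$ in \cref{eq:momentboundreq} is needed; the remaining ingredients are routine given \cite[Section~11.2]{SV06}.
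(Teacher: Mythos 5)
Your proposal is correct and is essentially the argument the paper invokes: the paper simply defers to \cite[Appendix~A]{DG22} and \cite[Section~11.2]{SV06}, whose proof is exactly the tightness--martingale-problem--uniqueness scheme (with the Lindeberg control supplied by the $\ell>2$ moment bound and uniqueness from the uniform Lipschitz condition) that you have sketched out.
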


The proof follows that of \cite[Theorem A.1]{DG22}, since the needed
results from \cite{SV06} are not specific to the one-dimensional
case. We omit the details.
\end{appendix}

\printbibliography

\end{document}